\definecolor{lightgray}{rgb}{0.9, 0.9, 0.9}
\definecolor{darkgray}{rgb}{0.7, 0.7, 0.7}
\definecolor{darkblue}{rgb}{0, 0, .4}
\newtheorem{theorem}{Theorem}[section]
\newtheorem{proposition}[theorem]{Proposition}
\newtheorem{lemma}[theorem]{Lemma}
\newtheorem{conjecture}[theorem]{Conjecture}
\newtheorem{problem}[theorem]{Problem}
\newcounter{todocounter}
\newcommand{\minisec}[1]{\bigskip\noindent{\bf #1.}}
\newfont{\footsc}{cmcsc10 at 8truept}
\newfont{\footbf}{cmbx10 at 8truept}
\newfont{\footrm}{cmr10 at 10truept}
\renewenvironment{abstract}%
		{
		  \begin{list}{}%
		     {\setlength{\rightmargin}{1in}%
		      \setlength{\leftmargin}{1in}}%
		   \item[]\ignorespaces\begin{small}}%
		 {\end{small}\unskip\end{list}}
\title{\sc{Small Permutation Classes}}
\author{\sc{Vincent Vatter}\thanks{The first draft of this paper was completed while the author was a research fellow at the University of St Andrews, supported by EPSRC grant EP/C523229/1.  Later revisions were completed while the author was a John Wesley Young Research Instructor at Dartmouth College.}\\
\small Department of Mathematics\\[-3pt]
\small University of Florida\\[-3pt]
\small Gainesville, Florida 32611 USA\\[-10pt]}
\date{}
\begin{document}
\maketitle

\pagestyle{main}

\newcommand{\Av}{\operatorname{Av}}
\newcommand{\Age}{\operatorname{Age}}
\newcommand{\A}{\mathcal{A}}
\newcommand{\C}{\mathcal{C}}
\newcommand{\D}{\mathcal{D}}
\newcommand{\E}{\mathcal{E}}
\newcommand{\HH}{\mathcal{H}}
\newcommand{\I}{\mathcal{I}}
\newcommand{\J}{\mathcal{J}}
\newcommand{\K}{\mathcal{K}}
\renewcommand{\L}{\mathcal{L}}
\newcommand{\M}{\mathcal{M}}
\newcommand{\N}{\mathcal{N}}
\renewcommand{\P}{\mathcal{P}}
\newcommand{\R}{\mathcal{R}}
\renewcommand{\S}{\mathcal{S}}
\renewcommand{\O}{\mathcal{O}}
\renewcommand{\S}{\mathcal{S}}
\newcommand{\gr}{\mathrm{gr}}
\newcommand{\lgr}{\underline{\gr}}
\newcommand{\ugr}{\overline{\gr}}
\newcommand{\Grid}{\operatorname{Grid}}
\newcommand{\zpm}{0/\mathord{\pm} 1}
\newcommand{\proj}{\operatorname{proj}}
\newcommand{\height}{\operatorname{ht}}
\newcommand{\OEISlink}[1]{\href{http://www.research.att.com/projects/OEIS?Anum=#1}{#1}}
\newcommand{\OEISref}{\href{http://www.research.att.com/\~njas/sequences/}{OEIS}~\cite{sloane:the-on-line-enc:}}
\newcommand{\OEIS}[1]{(Sequence \OEISlink{#1} in the \OEISref.)}

\begin{abstract}
We establish a phase transition for permutation classes (downsets of permutations under the permutation containment order): there is an algebraic number $\kappa$, approximately $2.20557$, for which there are only countably many permutation classes of growth rate (a.k.a. Stanley-Wilf limit) less than $\kappa$ but uncountably many permutation classes of growth rate $\kappa$, answering a question of Klazar.  We go on to completely characterize the possible sub-$\kappa$ growth rates of permutation classes, answering a question of Kaiser and Klazar.  Central to our proofs are the concepts of generalized grid classes (introduced herein), partial well-order, the substitution decomposition, and atomicity (a.k.a. the joint embedding property).
\end{abstract}

\section{Introduction}\label{small-intro}

For any collection (also known as a {\it property\/}), $\P$, of finite combinatorial structures, the function which maps $n$ to the number of structures in $\P$ with ground set $[n]=\{1,2,\dots,n\}$ is known as the {\it speed\/} of $\P$.  A property $\P$ is further said to be {\it hereditary\/} if it is closed under isomorphism and substructures.  The study of the possible speeds of hereditary properties of combinatorial structures essentially dates back to Scheinerman and Zito~\cite{scheinerman:on-the-size-of-:}, who established the first significant characterization theorem: speeds of hereditary properties of labeled graphs must be bounded, polynomial, exponential, factorial, or superfactorial.  Since this seminal result, speeds of various combinatorial structures have been studied; see Bollob\'as~\cite{bollobas:hereditary-and-} and Klazar~\cite{klazar:overview-of-som} for surveys.

Our interest lies with hereditary properties of permutations, which we call \emph{permutation classes}.  The permutation $\pi$ of $[n]$ \footnote{Here $[n]=\{1,2,\ldots,n\}$ and, more generally, for $a,b \in\mathbb{N}$ with $a<b$, the interval $\{a,a+1,\ldots,b\}$ is denoted by $[a,b]$, the interval $\{a+1,a+2,\ldots,b\}$ by
$(a,b]$, and so on.} contains the permutation $\sigma$ of $[k]$ (written $\sigma\le\pi$) if $\pi$ has a subsequence of length $k$ which is order isomorphic to $\sigma$, and such a subsequence is called an {\it occurrence\/}, or {\it copy\/}, of $\sigma$.  For example, $\pi=391867452$ (written in list, or one-line notation) contains $\sigma=51342$, as can be seen by considering the subsequence $91672$ ($=\pi(2),\pi(3),\pi(5),\pi(6),\pi(9)$).  A {\it permutation class\/} is thus a downset of permutations under this order: if $\C$ is a permutation class, $\pi\in\C$, and $\sigma\le\pi$, then $\sigma\in\C$.  For any set $X$ of permutations, we define its {\it closure\/} to be the permutation class $\{\sigma : \mbox{$\sigma\le\pi$ for some $\pi\in X$}\}$.  For any permutation class $\C$ there is a unique (and possibly infinite) antichain $B$ such that $\C=\Av(B)=\{\pi: \pi \not \geq\beta\mbox{ for all } \beta \in B\}$. This antichain $B$ is called the {\it basis} of $\C$.

We denote by $\C_n$ ($n \in \mathbb{N}$) the set of permutations in $\C$ of length $n$, so the speed of $\C$ is the function $n\mapsto |\C_n|$.  We further refer to $\sum |\C_n|x^n$ as the {\it generating function for $\C$\/}.  Whether this generating function counts the empty permutation of length $0$ is a matter a taste; we elect to count it except when noted.

The Marcus-Tardos Theorem~\cite{marcus:excluded-permut:} (formerly the Stanley-Wilf Conjecture) states that all permutation classes other than the class of all permutations have at most exponential speed, i.e., for every class $\C$ with a nonempty basis, there is a constant $K$ so that $\C$ contains at most $K^n$ permutations of length $n$ for all $n$.  Thus every nondegenerate permutation class $\C$ has finite {\it upper\/} and {\it lower growth rates\/} defined, respectively, by
\begin{eqnarray*}
\ugr(\C)&=&\limsup_{n\rightarrow\infty}\sqrt[n]{|\C_n|}\mbox{ and}\\
\lgr(\C)&=&\liminf_{n\rightarrow\infty}\sqrt[n]{|\C_n|}.
\end{eqnarray*}
It is conjectured that every permutation class has a {\it growth rate\/}, and when we are dealing with a class for which $\ugr(\C)=\lgr(\C)$, we denote this quantity by $\gr(\C)$.

Herein we are concerned with the set of (upper, lower) growth rates of permutation classes.  From this viewpoint, the Erd\H os-Szekeres Theorem below characterizes the permutation classes of growth rate $0$: if $\C$ contains arbitrarily long monotone permutations then $\lgr(\C)\ge 1$ while otherwise $\C$ is finite and so $\gr(\C)$ exists and equals $0$.

\newtheorem*{es}{The Erd\H os-Szekeres Theorem~\cite{erdos:a-combinatorial:}}
\begin{es}
Every permutation $\pi$ of length $(m-1)^2+1$ contains a monotone permutation of length at least $m$.
\end{es}

Moving beyond the growth rate $0$ classes, Kaiser and Klazar~\cite{kaiser:on-growth-rates:} proved that the only growth rates of permutation classes (they are in this case proper, unadjectivated, growth rates) less than $2$ are positive roots of $1-2x^k+x^{k+1}$ for some $k\ge 0$, i.e., that the speeds of such classes are logarithmically asymptotic to the $k$-Fibonacci numbers for some $k$.  Later, Klazar~\cite{klazar:on-the-least-ex:} showed that there are only countably many permutation classes with these growth rates.  Two natural questions are then
\begin{enumerate}
\item Kaiser and Klazar~\cite{kaiser:on-growth-rates:}: What is the next (lower, upper, proper) growth rate after $2$?
\item Klazar~\cite{klazar:on-the-least-ex:}: What is the smallest (lower, upper, proper) growth rate for which there are uncountably many permutation classes?
\end{enumerate}
We provide answers to both of these:  Answer 1 (see Theorem~\ref{small-growth-rates}) is
$$
\nu=\mbox{the unique positive root of $1+2x+x^2+x^3-x^4$}\approx 2.06599
$$
while Answer 2 (see Theorem~\ref{theorem-kappa}) is
$$
\kappa=\mbox{the unique positive root of $1+2x^2-x^3$}\approx 2.20557.
$$
Furthermore, as established in Theorem~\ref{small-growth-rates}, $\kappa$ is the least accumulation point of accumulation points of growth rates of permutation classes.  We call permutations classes with growth rates less than $\kappa$ {\it small\/}.

Central to our proofs is the concept of generalized grid classes, introduced in Section~\ref{sec-generalized-grids}.  In Section~\ref{sec-gridding-characterization} we present a characterization of these classes and of grid irreducible classes.  We then show in Section~\ref{sec-subst-gridding} that every small permutation class is $\D$-griddable for  nice class $\D$ and in Section~\ref{sec-gridding-condition}, that small permutation classes lie in particularly tractable grid classes.  These results allow us in Section~\ref{sec-small-growth-rates} to provide Answer 2, i.e., that there are only countably many permutation classes with growth rate less than $\kappa$, and as a consequence, that each of these classes is partially well-ordered.  This partial well-order condition turns out to be essential for the characterization of growth rates below $\kappa$ (these all happen to be proper growth rates), also in Section~\ref{sec-small-growth-rates}.

Quite often we need to show that small permutation classes cannot contain certain types of structures, or in other words, that if a class were to contain those structures then it would be large.  These computations, which are in the cases we need mostly routine but always tedious, have been relegated to the Appendix.

The remainder of the introduction consists of basic structural notions used in the proof.  For completeness, short proofs of the lesser known results have been liberally included.  Also, for concreteness, the results are specialized to the case of permutations, although in many cases they hold much more generally.

\minisec{Symmetries}
It is frequently helpful to note that permutation containment is preserved by the symmetries of the square, i.e., the dihedral group on $8$ elements.  This group is generated by the three symmetries inverse, reverse, and complement, which are defined, respectively, on a permutation $\pi$ of length $n$ by
\begin{eqnarray*}
\pi^{-1}(\pi(i))&=&i,\\
\pi^{\textrm{r}}(i)&=&\pi(n+1-i),\\
\pi^{\textrm{c}}(i)&=&n+1-\pi(i),
\end{eqnarray*}
for all $i$.

\minisec{Inflations, simple permutations, alternations, and substitution completions}
An {\it interval\/} in the permutation $\pi$ is a set of contiguous indices $I=[a,b]$ such that the set of values $\pi(I)=\{\pi(i) : i\in I\}$ is also contiguous.  Every permutation $\pi$ of $[n]$ has intervals of length $0$, $1$, and $n$; $\pi$ is said to be {\it simple\/} if it has no other intervals.  For an extensive study of simple permutations we refer the reader to Brignall's survey~\cite{brignall:a-survey-of-sim:}.

Going in the other direction, given a permutation $\sigma$ of length $m$ and nonempty permutations $\alpha_1,\dots,\alpha_m$, the {\it inflation\/} of $\sigma$ by $\alpha_1,\dots,\alpha_m$ --- denoted $\sigma[\alpha_1,\dots,\alpha_m]$ --- is the permutation obtained by replacing each entry $\sigma(i)$ by an interval that is order isomorphic to $\alpha_i$.  For example, $2413[1,132,321,12]=4\ 798\ 321\ 56$ (see Figure~\ref{fig-479832156}).  Simple permutations cannot be deflated.  Conversely, we have the following.

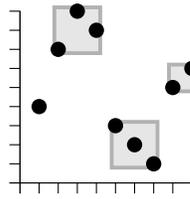
\begin{figure}
\begin{center}
\psset{xunit=0.01in, yunit=0.01in}
\psset{linewidth=0.005in}
\begin{pspicture}(0,0)(93,93)
\psaxes[dy=10, Dy=1, dx=10, Dx=1, tickstyle=bottom, showorigin=false, labels=none](0,0)(90,90)
\psframe[linecolor=darkgray,fillstyle=solid,fillcolor=lightgray,linewidth=0.02in](7,37)(13,43)
\psframe[linecolor=darkgray,fillstyle=solid,fillcolor=lightgray,linewidth=0.02in](17,67)(43,93)
\psframe[linecolor=darkgray,fillstyle=solid,fillcolor=lightgray,linewidth=0.02in](47,7)(73,33)
\psframe[linecolor=darkgray,fillstyle=solid,fillcolor=lightgray,linewidth=0.02in](77,47)(93,63)
\pscircle*(10,40){0.04in}
\pscircle*(20,70){0.04in}
\pscircle*(30,90){0.04in}
\pscircle*(40,80){0.04in}
\pscircle*(50,30){0.04in}
\pscircle*(60,20){0.04in}
\pscircle*(70,10){0.04in}
\pscircle*(80,50){0.04in}
\pscircle*(90,60){0.04in}
\end{pspicture}
\end{center}
\caption{The plot of $479832156$, an inflation of $2413$.}\label{fig-479832156}
\end{figure}

\begin{proposition}[Albert and Atkinson~\cite{albert:simple-permutat:}]\label{simple-decomp-1}
Every permutation except $1$ is the inflation of a unique simple permutation of length at least $2$.  Furthermore, if $\pi$ can be expressed as $\sigma[\alpha_1,\dots,\alpha_m]$ where $\sigma$ is a nonmonotone simple permutation, then each interval $\alpha_i$ is unique.
\end{proposition}

A class $\C$ of permutations is {\it substitution complete\/} if $\sigma[\alpha_1,\dots,\alpha_m]\in\C$ for all $\sigma\in\C_m$ and $\alpha_1,\dots,\alpha_m\in\C$.  The {\it substitution completion\/} of a set $X$ of permutations, denoted $\S(X)$, is defined as the smallest substitution complete class containing $X$.  Note that if $\C$ is a permutation class then $\C$ and $\S(\C)$ contain the same simple permutations.

\begin{figure}
\begin{center}
\begin{tabular}{ccccc}
\psset{xunit=0.01in, yunit=0.01in}
\psset{linewidth=0.005in}
\begin{pspicture}(0,0)(100,100)
\psline[linecolor=darkgray,linestyle=solid,linewidth=0.02in](55,0)(55,105)
\psaxes[dy=10,Dy=1,dx=10,Dx=1,tickstyle=bottom,showorigin=false,labels=none](0,0)(100,100)
\pscircle*(10,70){0.04in}
\pscircle*(20,50){0.04in}
\pscircle*(30,30){0.04in}
\pscircle*(40,90){0.04in}
\pscircle*(50,10){0.04in}
\pscircle*(60,60){0.04in}
\pscircle*(70,100){0.04in}
\pscircle*(80,40){0.04in}
\pscircle*(90,80){0.04in}
\pscircle*(100,20){0.04in}
\end{pspicture}
&\rule{10pt}{0pt}&
\psset{xunit=0.01in, yunit=0.01in}
\psset{linewidth=0.005in}
\begin{pspicture}(0,0)(100,100)
\psline[linecolor=darkgray,linestyle=solid,linewidth=0.02in](55,0)(55,105)
\psaxes[dy=10,Dy=1,dx=10,Dx=1,tickstyle=bottom,showorigin=false,labels=none](0,0)(100,100)
\pscircle*(10,10){0.04in}
\pscircle*(20,30){0.04in}
\pscircle*(30,50){0.04in}
\pscircle*(40,70){0.04in}
\pscircle*(50,90){0.04in}
\pscircle*(60,20){0.04in}
\pscircle*(70,40){0.04in}
\pscircle*(80,60){0.04in}
\pscircle*(90,80){0.04in}
\pscircle*(100,100){0.04in}
\end{pspicture}
&\rule{10pt}{0pt}&
\psset{xunit=0.01in, yunit=0.01in}
\psset{linewidth=0.005in}
\begin{pspicture}(0,0)(100,100)
\psline[linecolor=darkgray,linestyle=solid,linewidth=0.02in](0,55)(105,55)
\psaxes[dy=10,Dy=1,dx=10,Dx=1,tickstyle=bottom,showorigin=false,labels=none](0,0)(100,100)
\pscircle*(10,10){0.04in}
\pscircle*(30,20){0.04in}
\pscircle*(50,30){0.04in}
\pscircle*(70,40){0.04in}
\pscircle*(90,50){0.04in}
\pscircle*(100,60){0.04in}
\pscircle*(80,70){0.04in}
\pscircle*(60,80){0.04in}
\pscircle*(40,90){0.04in}
\pscircle*(20,100){0.04in}
\end{pspicture}
\end{tabular}
\end{center}
\caption{From left to right, an arbitrary horizontal alternation, a horizontal parallel alternation, and a vertical wedge alternation.}\label{fig-alternation}
\end{figure}
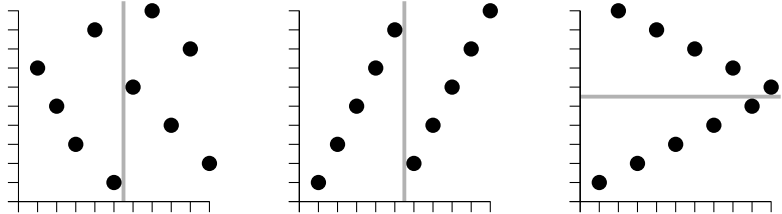

An {\it alternation\/} is a permutation in which every odd entry lies to the left of every even entry, or any symmetry of such a permutation.%
\footnote{\label{fn-alternations}It is sometimes necessary to specify the direction in which this alternation occurs.  In such cases, we use the term {\it horizontal alternation\/} to refer to a permutation in which every odd entry lies to the left of every even entry, or the reverse of such a permutation (in which case the entries of the permutation lie alternatively to the left and then to the right of a vertical line, e.g., the permutations shown on the left and center of Figure~\ref{fig-alternation}). A vertical alternation is the group-theoretic inverse of a horizontal alternation (e.g., the permutation shown on the right of Figure~\ref{fig-alternation} is a vertical alternation).}
Equivalently, an alternation is a permutation whose plot can be divided into two parts, by a single horizontal or vertical line, so that for every pair of entries from the same part there is a entry from the other part which {\it separates\/} them, i.e., there is a entry from the other part which lies either horizontally or vertically between them.  A {\it parallel alternation\/} is an alternation in which these two sets of entries form monotone subsequences, either both increasing or both decreasing, while a {\it wedge alternation\/} is one in which the two sets of entries form monotone subsequences pointing in opposite directions.  Note that every parallel alternation is either simple or nearly so in the sense that at most two entries need be removed from any parallel alternation to obtain a simple permutation.  On the contrary, wedge alternations are not simple (although they can be made simple by the addition of at most one entry).  Our next result follows routinely from the Erd\H{o}s-Szekeres Theorem.

\begin{proposition}\label{prop-alt-par-wedge}
If the permutation class $\C$ contains arbitrarily long alternations, then it contains arbitrarily long wedge or parallel alternations.
\end{proposition}
\begin{proof}
Suppose that $\C$ contains an alternation of length $2n\ge 2k^4$.  By symmetry, we may assume that this alternation is a vertical alternation.  The Erd\H{o}s-Szekeres Theorem then implies that the sequence $\pi(1),\pi(3),\dots,\pi(2n-1)$ contains a monotone subsequence of length at least $k^2$, say $\pi(i_1),\pi(i_2),\dots,\pi(i_{k^2})$.  Applying the Erd\H{o}s-Szekeres Theorem to the subsequence $\pi(i_1+1),\pi(i_2+1),\dots,\pi(i_{k^2}+1)$ completes the proof.
\end{proof}

The following result of Schmerl and Trotter is useful for inductive proofs about simple permutations.  While we state only the permutation case of this result (a proof of which is also given by Murphy~\cite{murphy:restricted-perm:}), Schmerl and Trotter's proof includes all irreflexive binary relational structures.  A generalization to ``$k$-structures'' is given by Ehrenfeucht and McConnell~\cite{ehrenfeucht:a-k-structure-g:}.

\begin{theorem}[Schmerl and Trotter~\cite{schmerl:critically-inde:}]\label{thm-schmerl-trotter}
Every simple permutation $\pi$ of length $n\ge 2$ contains a simple permutation of length $n-1$ or $n-2$.  Furthermore, $\pi$ contains a simple permutation of length $n-1$ unless $\pi$ is a simple parallel alternation (recall the central permutation in Figure~\ref{fig-alternation}).
\end{theorem}

\minisec{Direct sums, skew sums, and sum completions}
Two inflations in particular occur frequently enough that they deserve their own names: the {\it direct sum\/} $\pi\oplus\sigma=12[\pi,\sigma]$ and the {\it skew sum\/} $\pi\ominus\sigma=21[\pi,\sigma]$.  A class $\C$ is said to be {\it sum complete\/} if $\pi\oplus\sigma\in\C$ whenever $\pi,\sigma\in\C$.  Analogously, a class is said to be {\it skew sum complete\/} if $\pi\ominus\sigma\in\C$ for all $\pi,\sigma\in\C$.  Furthermore, a permutation is {\it sum indecomposable\/} (or {\it connected\/}) if it cannot be expressed as the direct sum of two shorter permutations and {\it skew sum indecomposable\/} (or simply {\it skew decomposable\/}) if it cannot be expressed as the skew sum of two shorter permutations.

Recall that the {\it permutation graph\/} corresponding to $\pi$ of length $n$ is the graph $G_\pi$ with vertices labeled by $[n]$, where $i\sim j$ if $i<j$ and $\pi(i)>\pi(j)$, i.e., if the entries in positions $i$ and $j$ form an inversion.  It is easy to see that this graph captures the notion of sum indecomposability:

\begin{proposition}\label{sum-indecomp-connected}
The permutation $\pi$ is sum indecomposable if and only if $G_\pi$ is connected.
\end{proposition}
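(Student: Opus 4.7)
The plan is to establish each direction of the equivalence separately. The forward direction is essentially definitional: if $\pi=\alpha\oplus\beta$ with $\alpha$ of length $k$, then positions $[1,k]$ hold exactly the values $[1,k]$, so for every $i\le k<j$ we have $\pi(i)\le k<\pi(j)$, ruling out an inversion. Hence no edge of $G_\pi$ crosses from $[1,k]$ to $[k+1,n]$, and $G_\pi$ is disconnected.

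For the reverse direction, my plan is to prove the stronger structural claim that every connected component of $G_\pi$ is an interval of consecutive positions. Granting this, let $C_1$ be the component containing position $1$ and write $C_1=[1,k]$ with $k<n$ (using disconnectedness of $G_\pi$). For any $i\le k<j$, the non-adjacency of $i$ and $j$ forces $\pi(i)<\pi(j)$, so the $k$ values in $\pi(C_1)$ all lie below the remaining values --- which is only possible when $\pi([1,k])=[1,k]$. Then $\pi$ decomposes as a direct sum of two nonempty permutations, as required.

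The main obstacle is the interval claim, which I would argue by contradiction. Suppose $a<b<c$ with $a,c$ in a common component $C$ and $b$ in a different component. Non-adjacency of $b$ to every vertex of $C$ shows that each $v\in C$ satisfies $\pi(v)<\pi(b)$ when $v<b$ and $\pi(v)>\pi(b)$ when $v>b$. Along any path from $a$ to $c$ inside $C$, a step crossing from one side of $b$ to the other would join two vertices whose positions and $\pi$-values stand in the same relative order, failing to form an inversion and hence not being an edge of $G_\pi$. So the path must remain on one side of $b$, contradicting that it connects $a<b$ to $c>b$. Once this observation is in place, the rest of the argument is immediate.
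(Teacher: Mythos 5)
Your proof is correct and follows essentially the same route as the paper's: the forward direction is identical, and the reverse direction rests on the observation that connected components of $G_\pi$ occupy intervals of positions, from which the direct-sum decomposition falls out. The only difference is that the paper dismisses the interval claim as ``clear'' while you supply the short contradiction argument justifying it; that argument is correct.
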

\begin{proof}
Suppose $\pi$ has length $n$.  Clearly the vertices of any connected component of $G_\pi$ must be labeled by an interval $[i,j]$.  Thus if $G_\pi$ if disconnected, it contains a connected component of the form $[1,j]$ for some $j$, from which it follows that $\pi=\pi(1)\cdots\pi(j)\oplus\pi(j+1)\cdots\pi(n)$.  On the other hand, if $\pi=\sigma\oplus\tau$ then $G_\pi$ is the disjoint union of $G_\sigma$ and $G_\tau$, and thus disconnected.
\end{proof}

Given a set of permutations $X$ we define the {\it sum completion of $X$\/}, denoted by $\bigoplus X$, to be the smallest sum complete permutation class containing $X$.  Equivalently,
$$
\bigoplus X=\{\pi_1\oplus\pi_2\oplus\cdots\oplus\pi_k : \mbox{each $\pi_i$ is contained in an element of $X$}\}.
$$
When $X$ is a singleton, say $X=\{\pi\}$, we simply write $\bigoplus\pi$ for its sum completion.  We analogously define the {\it skew sum completion of $X$\/} to be the smallest skew sum complete permutation class containing $X$.  Counting sum complete classes is easy, given enough information about the sum indecomposables:

\begin{proposition}\label{enum-oplus-completion}
Let $f$ denote the generating function for the set of sum indecomposable permutations contained in members of $X$ (not counting the empty permutation).  Then the generating function for $\bigoplus X$ is $1/(1-f)$.%
%
%
\end{proposition}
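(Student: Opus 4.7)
The plan is to set up a bijection between elements of $\bigoplus X$ and finite sequences $(\sigma_1,\ldots,\sigma_m)$ (with $m\ge 0$) of sum indecomposable permutations, each contained in some element of $X$, such that $\sigma_1\oplus\cdots\oplus\sigma_m$ is the corresponding element of $\bigoplus X$. The key preliminary fact is that every nonempty permutation $\pi$ has a \emph{unique} decomposition $\pi=\sigma_1\oplus\sigma_2\oplus\cdots\oplus\sigma_m$ with each $\sigma_j$ sum indecomposable; this can be read off greedily from left to right, or equivalently from the left-to-right order of the connected components of $G_\pi$ as described in Proposition~\ref{sum-indecomp-connected}.

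The substantive step is to show that $\pi\in\bigoplus X$ if and only if each summand $\sigma_j$ in this canonical decomposition is contained in some member of $X$. The ``if'' direction is immediate from the alternative description of $\bigoplus X$ displayed just above the proposition. For the ``only if'' direction, suppose $\pi=\pi_1\oplus\cdots\oplus\pi_k$ with each $\pi_i$ contained in some $\xi_i\in X$, and further decompose each $\pi_i$ into sum indecomposables $\pi_i=\tau_{i,1}\oplus\cdots\oplus\tau_{i,l_i}$. Concatenating produces a sum indecomposable decomposition of $\pi$; by uniqueness it must coincide with $\sigma_1\oplus\cdots\oplus\sigma_m$, so every $\sigma_j$ equals some $\tau_{i,j'}$, and in particular is a sub-permutation of the corresponding $\pi_i\le\xi_i\in X$.

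With the bijection established, the enumeration is the usual convolution argument. The number of length-$n$ elements of $\bigoplus X$ equals the coefficient of $x^n$ in
$$\sum_{m\ge 0} f(x)^m = \frac{1}{1-f(x)},$$
where the $m=0$ term accounts for the empty permutation. The only potential snag is the ``only if'' direction above, but given the uniqueness of the sum indecomposable decomposition, this step is essentially bookkeeping rather than a genuine obstacle.
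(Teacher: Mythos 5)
Your proposal is correct and takes essentially the same approach as the paper: both rest on the canonical bijection between elements of $\bigoplus X$ and finite sequences of sum indecomposable permutations contained in members of $X$, followed by the geometric-series convolution. The paper simply states this bijection as given, whereas you have spelled out the uniqueness of the sum-indecomposable decomposition and the two directions needed to see that the bijection's image is exactly the right set of sequences.
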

\begin{proof}
There is a canonical bijection between elements of $\bigoplus X$ and sequences of nonempty sum indecomposable permutations contained in members of $X$.  Therefore the generating function for $\bigoplus X$ is $1+f+f^2+\cdots$, establishing the proposition.
\end{proof}

As our interest lies in growth rates rather than exact enumeration, we typically follow a use of Proposition~\ref{enum-oplus-completion} with an application of Pringsheim's Theorem:

\newtheorem*{pringsheimsthm}{Pringsheim's Theorem}
\begin{pringsheimsthm}[see Flajolet and Sedgewick~{\cite[Section IV.3]{flajolet:analytic-combin:}}]
The upper growth rate, $\limsup_{n\rightarrow\infty}\sqrt[n]{a_n}$, of a sequence $(a_n)_{n\ge 0}$ of nonnegative numbers is equal to the reciprocal of the smallest positive singularity of the power series $\sum a_nx^n$.
\end{pringsheimsthm}

We conclude our discussion of sums and skew sums with the following result of Arratia.

\begin{proposition}[Arratia~\cite{arratia:on-the-stanley-:}]\label{arratia-gr}
Every sum or skew sum complete permutation class has a (possibly infinite) growth rate.
\end{proposition}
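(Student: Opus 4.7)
The plan is to apply Fekete's lemma on supermultiplicative sequences. First I would observe that for any sum closed class $\C$ and any $m,n\ge 0$ the map $(\pi,\sigma)\mapsto\pi\oplus\sigma$ sends $\C_m\times\C_n$ into $\C_{m+n}$ by sum closure, and is injective once $m$ and $n$ are fixed: in $\pi\oplus\sigma$ the factor $\pi$ occupies the first $m$ positions and smallest $m$ values while $\sigma$ occupies the last $n$ positions and largest $n$ values, so both factors can be read off the image. This immediately gives the supermultiplicative estimate $|\C_{m+n}|\ge |\C_m|\cdot|\C_n|$.

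Next I would invoke Fekete's lemma in the standard form: for any sequence $(a_n)_{n\ge 1}$ of positive reals satisfying $a_{m+n}\ge a_m a_n$, the limit $\lim_{n\rightarrow\infty}\sqrt[n]{a_n}$ exists in $[0,+\infty]$ and coincides with $\sup_n\sqrt[n]{a_n}$. Positivity of $|\C_n|$ is immediate whenever $\C$ is nonempty, for then (being downward closed) $\C$ contains the permutation $1$, and sum closure then puts the increasing permutation $12\cdots n$ in $\C_n$ for every $n\ge 1$. Fekete's lemma therefore produces the (possibly infinite) growth rate, and in the nondegenerate case the Markus--Tardos theorem of course forces this quantity to be finite.

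The skew sum case is handled identically after replacing $\oplus$ by $\ominus$: the analogous map $(\pi,\sigma)\mapsto\pi\ominus\sigma$ is injective for fixed lengths by the same positional argument, so supermultiplicativity holds and Fekete again applies. There is no real obstacle in this argument; the two tiny points deserving a sentence of care are (i) that injectivity of the direct sum map genuinely requires specifying the lengths in advance --- without this, $1\oplus 12 = 12 \oplus 1 = 123$ is already a counterexample --- and (ii) reading the conclusion of Fekete's lemma so as to permit $\sup_n\sqrt[n]{|\C_n|}=+\infty$, which is exactly what ``possibly infinite'' in the statement is designed to accommodate.
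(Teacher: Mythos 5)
Your proof is correct and follows essentially the same route as the paper: observe that $\oplus$ (resp.\ $\ominus$) injects $\C_m\times\C_n$ into $\C_{m+n}$, conclude supermultiplicativity of $(|\C_n|)$, and invoke Fekete's lemma. The extra remarks you make --- that injectivity requires fixing $m$ and $n$, and that positivity holds because $12\cdots n\in\C_n$ --- are sound and simply make explicit two small points the paper leaves implicit.
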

\begin{proof}
Suppose, without loss of generality that $\C$ is a sum complete permutation class.  Then $\oplus$ gives an injection from $\C_m\times\C_n$ to $\C_{m+n}$, so the sequence $|\C_n|$ is supermultiplicative and thus $\lim_{n\rightarrow\infty} \sqrt[n]{|\C_n|}$ exists by Fekete's Lemma%
\footnote{Fekete's Lemma in its more typical form says that if a $a_n$ is superadditive, meaning that $a_{m+n}\ge a_m+a_n$, then $\lim_{n\rightarrow\infty} a_n/n$ exists and is equal to $\sup a_n/n$.  To apply this form of Fekete's Lemma in our context, consider the sequence $\log |\C_n|$.}.
\end{proof}

\minisec{The increasing oscillating sequence}
Intimately connected with sum indecomposability is the {\it increasing oscillating sequence\/}\footnote{This sequence, listed as \OEISlink{A065164} in the \OEISref, also arises in the study of juggling, see Buhler, Eisenbud, Graham, and Wright~\cite{buhler:juggling-drops-:}.},
$$
4,1,6,3,8,5,\dots,2k+2,2k-1,\dots;
$$
a plot is shown in Figure~\ref{fig-inc-osc}.  We further define an {\it increasing oscillation\/}\footnote{Increasing oscillations are called {\it Gollan permutations\/} in computational biology, because they are the unique permutations of length $n$ which require $n-1$ steps to sort by reversals, see Pevzner~\cite{pevzner:computational-m:}.} to be any sum indecomposable permutation that is contained in the increasing oscillation, a {\it decreasing oscillation\/} to be the reverse of an increasing oscillation, and an {\it oscillation\/} to be any permutation that is either an increasing or a decreasing oscillation (this term dates back to at least Pratt~\cite{pratt:computing-permu:}).  Note that for $k\neq 3$, all oscillations of length $k$ are simple permutations.

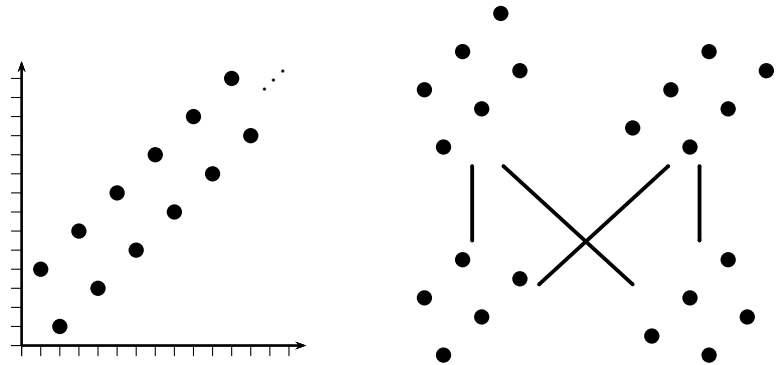
\begin{figure}
\begin{center}
\begin{tabular}{ccc}
\psset{xunit=0.01in, yunit=0.01in}
\psset{linewidth=1\psxunit}
\begin{pspicture}(0,0)(150,150)
\psaxes[dy=10, Dy=1, dx=10, Dx=1,tickstyle=bottom,showorigin=false,labels=none]{->}(0,0)(149,149)
\pscircle*(10,40){0.04in}
\pscircle*(20,10){0.04in}
\pscircle*(30,60){0.04in}
\pscircle*(40,30){0.04in}
\pscircle*(50,80){0.04in}
\pscircle*(60,50){0.04in}
\pscircle*(70,100){0.04in}
\pscircle*(80,70){0.04in}
\pscircle*(90,120){0.04in}
\pscircle*(100,90){0.04in}
\pscircle*(110,140){0.04in}
\pscircle*(120,110){0.04in}
\pstextpath[c]{\psline[linecolor=white](125,138)(135,148)}{$\dots$}
\end{pspicture}
&
\rule{20pt}{0pt}
&
\psset{xunit=0.01in, yunit=0.01in}
\psset{linewidth=0.02in}
\begin{psmatrix}
	\psset{xunit=0.01in, yunit=0.01in}
	\begin{pspicture}(10,10)(60,80)
	\pscircle*(10,40){0.04in}
	\pscircle*(20,10){0.04in}
	\pscircle*(30,60){0.04in}
	\pscircle*(40,30){0.04in}
	\pscircle*(50,80){0.04in}
	\pscircle*(60,50){0.04in}
	\end{pspicture}
&
	\psset{xunit=0.01in, yunit=0.01in}
	\begin{pspicture}(10,30)(80,80)
	\pscircle*(10,40){0.04in}
	\pscircle*(30,60){0.04in}
	\pscircle*(40,30){0.04in}
	\pscircle*(50,80){0.04in}
	\pscircle*(60,50){0.04in}
	\pscircle*(80,70){0.04in}
	\end{pspicture}
\\
	\psset{xunit=0.01in, yunit=0.01in}
	\begin{pspicture}(10,10)(60,60)
	\pscircle*(10,40){0.04in}
	\pscircle*(20,10){0.04in}
	\pscircle*(30,60){0.04in}
	\pscircle*(40,30){0.04in}
	\pscircle*(60,50){0.04in}
	\end{pspicture}
&
	\psset{xunit=0.01in, yunit=0.01in}
	\begin{pspicture}(10,30)(60,80)
	\pscircle*(10,40){0.04in}
	\pscircle*(30,60){0.04in}
	\pscircle*(40,30){0.04in}
	\pscircle*(50,80){0.04in}
	\pscircle*(60,50){0.04in}
	\end{pspicture}
\ncline[nodesep=0.1in]{c-c}{1,1}{2,1}
\ncline[nodesep=0.1in]{c-c}{1,1}{2,2}
\ncline[nodesep=0.1in]{c-c}{1,2}{2,2}
\ncline[nodesep=0.1in]{c-c}{1,2}{2,1}
\end{psmatrix}
\end{tabular}
\end{center}
\caption{A plot of the increasing oscillating sequence (left) and a selection of the Hasse diagram of increasing oscillations (right).}
\label{fig-inc-osc}
\end{figure}

The next proposition helps explain the connection between sum indecomposability and increasing oscillations.

\begin{proposition}\label{inc-osc-path}
If $G_\pi$ is an induced path then $\pi$ is an increasing oscillation.
\end{proposition}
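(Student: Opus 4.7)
The plan is to show that $\pi$ embeds as a sub-pattern of the increasing oscillating sequence $\omega = 4,1,6,3,8,5,\ldots$. The key is to exploit the fact that every path is bipartite. First I would $2$-color the vertices of $G_\pi$ (i.e., the positions of $\pi$) as $A$ and $B$ according to the bipartition of the path, so that every edge --- equivalently, every inversion of $\pi$ --- goes between the two colour classes. Since $A$ and $B$ are independent sets in $G_\pi$, no pair of positions within a single class forms an inversion, so the values of $\pi$ at $A$-positions listed in position order are strictly increasing, and likewise at $B$-positions. Thus $\pi$ is the merge of two increasing subsequences. Moreover, by Proposition~\ref{sum-indecomp-connected}, the connectedness of $G_\pi$ also gives that $\pi$ is sum-indecomposable, which is one of the two defining conditions of an increasing oscillation.

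Writing $A = \{a_1 < \cdots < a_s\}$ and $B = \{b_1 < \cdots < b_t\}$, I would next argue that the path traverses the $A$- and $B$-vertices in their natural orderings: that is, after possibly reversing the traversal and swapping the roles of $A$ and $B$, the path has the form $a_1 - b_1 - a_2 - b_2 - \cdots$ (or a near-alternation of this shape, where $|s-t| \le 1$ is forced by the alternation). The reason is that any ``out-of-order'' traversal would, together with the monotonicity of values within each colour class, produce a pair of inversions at a shared neighbour contradicting that monotonicity; this is eliminated by a short case analysis on a putative first skip.

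Given this structural picture, I would construct an explicit embedding $\phi$ of $\pi$ into $\omega$ by sending the $i$-th $A$-position to an odd position of $\omega$ (the ``top row'', with values $4, 6, 8, \ldots$) and the $j$-th $B$-position to an even position (the ``bottom row'', with values $1, 3, 5, \ldots$), chosen so that the linear order on $[n]$ is preserved by $\phi$. Using the closed form $\omega(2k-1) = 2k+2$ and $\omega(2k) = 2k-1$, the inversions of $\omega$ restricted to $\phi([n])$ are precisely the pairs of the form $(2k-1, 2k)$ and $(2k-1, 2k+2)$; these correspond exactly to the edges of the path in $G_\pi$ under $\phi$, so $\pi \le \omega$, confirming that $\pi$ is an increasing oscillation.

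The main obstacle is the second step: justifying that the path's traversal respects the $A$- and $B$-indexings. Once this structural claim is in place, the embedding and its verification are essentially mechanical given the explicit description of $\omega$.
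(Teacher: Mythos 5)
Your proof takes a genuinely different route from the paper's. The paper labels the path $p_1\sim p_2\sim\cdots$ and simply walks along it, deducing at each step where $p_{i}$ must sit in the plot relative to $p_{i-2}$ and $p_{i-1}$; the inclusion of $\pi$ in the increasing oscillating sequence falls out of this forced geometry without the bipartition ever being named. Your approach is more structural: you use the $2$-colouring to split $\pi$ into two increasing threads, argue that the path respects the position-order of each thread, and then hand-build an embedding $\phi$ into $\omega$. Both are sound plans; yours makes the ``merge of two increasing runs'' picture explicit and is arguably easier to believe at a glance, while the paper's is shorter because it never has to exhibit $\phi$.

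You are right that the crux is your second step, and your instinct for the contradiction is correct: if the path visited three $A$-vertices in an out-of-position order, the $B$-vertex adjacent to the two outer ones but not the middle one would have to have a value simultaneously above and below the middle $A$-value, which is impossible once the $A$-values are known to be increasing; a symmetric argument rules out $A$ increasing while $B$ decreases. But I would flag that your third step is not quite as ``mechanical'' as claimed, because step~2 alone (path order $a_1-b_1-a_2-\cdots$) does not tell you how the $a_i$ and $b_j$ interleave on the position line, and that interleaving is exactly what makes $\phi$ order-preserving. One needs the additional deductions that (after applying reverse-complement if necessary) every $a_i$ lies to the left of and above both of its path-neighbours $b_{i-1}, b_i$, that no ``switch'' of this orientation can occur along the path, and hence that the positions come in the order $a_1<a_2<b_1<a_3<b_2<\cdots$. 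Only with that in hand does the concrete choice $\phi(a_i)=2i-1$, $\phi(b_j)=2j+2$ both preserve position order and map the edges $\{a_i,b_{i-1}\},\{a_i,b_i\}$ onto precisely the inversions $\{2i-1,2i\},\{2i-1,2i+2\}$ of $\omega$. These extra facts do follow by the same style of case analysis you invoke, so this is a matter of completeness rather than a fatal flaw, but it is more than a footnote and deserves to be spelled out.
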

\begin{proof}
Suppose that $G_\pi$ is an induced path and label its vertices $\{p_1\sim p_2\sim\cdots\}$.  Note that $\pi$ is an increasing oscillation if and only if its inverse is, and thus we assume by symmetry that $p_2$ lies below and to the right of $p_1$.   The third vertex, $p_3$ (if it exists) must then be adjacent to $p_2$ but not to $p_1$ so must lie either
\begin{itemize}
\item horizontally between $p_1$ and $p_2$ and above $p_1$ or
\item vertical between $p_1$ and $p_2$ and to the left of $p_1$.
\end{itemize}
We consider only the first of these two cases, as the other follows similarly.  The fourth vertex $p_4$ (again, if it exists) must lie vertically between $p_1$ and $p_3$ and to the right of $p_2$.  Continuing in this manner it is easy to see that $\pi$ is contained in the increasing oscillating sequence, and $\pi$ is sum indecomposable because $G_\pi$ is connected (Proposition~\ref{sum-indecomp-connected}).
\end{proof}

With Proposition~\ref{inc-osc-path} in hand, we can slightly strengthen Proposition~\ref{sum-indecomp-connected}.

\begin{proposition}\label{sum-indecomp-connected-strong}
The permutation $\pi$ of length $n$ is sum indecomposable if and only if $G_\pi$ contains a path connecting the vertices $1$ and $n$.
\end{proposition}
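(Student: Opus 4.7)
The plan is to build on Proposition~\ref{sum-indecomp-connected} for the forward direction and argue by contrapositive for the converse. If $\pi$ is sum indecomposable, then $G_\pi$ is connected by that earlier proposition, so in particular it contains a path between the vertices $1$ and $n$; this is immediate.

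For the converse, I would prove the contrapositive: if $\pi$ admits a sum decomposition $\pi = \alpha \oplus \beta$ with $|\alpha| = k$ for some $1 \le k \le n-1$, then no path in $G_\pi$ connects vertex $1$ to vertex $n$. The key observation is that by the very definition of $\oplus$, for any $i \le k < j$ we have $\pi(i) \le k < \pi(j)$, so $(i,j)$ is not an inversion and $i \not\sim j$ in $G_\pi$. Therefore every edge of $G_\pi$ lies entirely within $[1,k]$ or entirely within $[k+1,n]$, which places vertex $1$ and vertex $n$ in distinct connected components.

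There is no real obstacle; the proposition is a small refinement of Proposition~\ref{sum-indecomp-connected} and the same argument template applies. The only point worth making explicit is that a sum decomposition not merely disconnects $G_\pi$ but separates vertex $1$ from vertex $n$ in particular, since the first block always contains the index $1$ and the second always contains the index $n$.
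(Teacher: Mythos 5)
Your proof is correct, and the converse direction takes a genuinely different (and simpler) route than the paper's. You argue by contrapositive: a sum decomposition $\pi=\alpha\oplus\beta$ with $|\alpha|=k$ forces $\pi(i)\le k<\pi(j)$ for all $i\le k<j$, so no edge of $G_\pi$ crosses the cut at $k$, and hence vertex $1$ (lying in $[1,k]$) is separated from vertex $n$ (lying in $[k+1,n]$). This is essentially the observation already made in the proof of Proposition~\ref{sum-indecomp-connected}, just sharpened to track which components contain $1$ and $n$. The paper instead argues the converse directly: given a path from $1$ to $n$, it passes to a shortest (hence induced) path, invokes Proposition~\ref{inc-osc-path} to identify that path as an increasing oscillation, and then checks that every remaining entry of $\pi$ is adjacent in $G_\pi$ to some vertex of the path, concluding that $G_\pi$ is in fact connected. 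The paper's argument is more work but yields a stronger structural fact (any $1$--$n$ path forces full connectivity of $G_\pi$, with every vertex adjacent to the oscillating path), and it previews machinery that reappears in the proof of Proposition~\ref{contain-two-perm}, which likewise passes to a shortest $1$--$n$ path and uses its oscillation structure. Your contrapositive argument is the cleaner way to establish the proposition as stated.
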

\begin{proof}
If $\pi$ is sum indecomposable then $G_\pi$ is connected by Proposition~\ref{sum-indecomp-connected}, it contains the desired path.  Now suppose that $G_\pi$ contains a path connecting the vertices $1$ and $n$.  Let $P=\{1=p_1\sim p_2\sim\cdots\sim p_m=n\}$ denote a shortest path between the vertices $1$ and $n$, so $P$ is an induced path.  By Proposition~\ref{inc-osc-path}, the points $p_1,p_2,\dots,p_m$ are order isomorphic to an increasing oscillation.

We then have that every entry above and to the left of a $p_{2\ell}$ entry is adjacent to $p_{2\ell}$ in $G_\pi$ while every entry below and to the right of a $p_{2\ell-1}$ is adjacent to $p_{2\ell-1}$ in $G_\pi$.  It is easy to check that at least one of these conditions holds for each entry of $\pi$, establishing that $G_\pi$ is connected, as desired.
\end{proof}

In addition to their connections to sum indecomposable permutations, we also need facts about increasing oscillations themselves.  We begin by recalling the basis for the class of permutations contained in some increasing oscillation.  This result was stated without proof in Murphy's thesis~\cite{murphy:restricted-perm:}, but has since been proved formally.

\begin{proposition}[Brignall, Ru\v{s}kuc, and Vatter~\cite{brignall:simple-permutat:b}]\label{prop-embed-inc-osc}
The class of all permutations contained in some increasing oscillation is $\Av(321,2341,3412,4123)$.
\end{proposition}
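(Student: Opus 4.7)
My plan is to prove both inclusions of the claimed equality. The forward inclusion $\Av(321,2341,3412,4123)\supseteq \{\pi : \pi\le u \text{ for some increasing oscillation } u\}$ I would dispatch by direct inspection of the increasing oscillating sequence $4,1,6,3,8,5,\ldots$: each of the four patterns demands a configuration (respectively a descending triple; an ascending triple with a smaller value behind it; two disjoint ascents with one wholly below the other; and a ``high'' entry followed by three ascending lower entries) that is manifestly absent from the sequence.

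For the reverse inclusion, fix $\pi\in\Av(321,2341,3412,4123)$ and study the permutation graph $G_\pi$. Avoidance of $321$ renders $G_\pi$ triangle-free. For induced claws $K_{1,3}$, I would run a short case analysis: if $v$ is the center of an induced claw with pairwise non-inverted leaves $\ell_1,\ell_2,\ell_3$ in position order (hence with increasing values), then placing $v$ strictly interior among the leaves forces a contradiction through the value chain $v$ produces with its neighbors, leaving only the two extremal positions for $v$, which realize the patterns $4123$ and $2341$ respectively. Hence $G_\pi$ is also claw-free, so has maximum degree at most $2$, meaning each of its components is a path or a cycle. Induced $4$-cycles are then killed by avoidance of $3412$ (by direct enumeration, $3412$ is the only length-$4$ permutation with $C_4$ as its inversion graph), and no $C_n$ with $n\ge 5$ can arise as an induced subgraph since $C_n$ is not a permutation graph for such $n$ (odd cycles are not comparability graphs, and for even $n\ge 6$ the complement $\overline{C_n}$ fails to admit a transitive orientation --- the case $n=6$, where $\overline{C_n}$ is the triangular prism, is easy to check by hand). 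Consequently each connected component of $G_\pi$ is a path.

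Using Proposition~\ref{sum-indecomp-connected}, the components of $G_\pi$ correspond to intervals of positions, so they give a sum decomposition $\pi=\alpha_1\oplus\cdots\oplus\alpha_k$ in which each $\alpha_j$ is sum-indecomposable and $G_{\alpha_j}$ is an induced path; by Proposition~\ref{inc-osc-path} each $\alpha_j$ is therefore an increasing oscillation. To assemble these into a single embedding of $\pi$ inside an increasing oscillation, I would exploit the shift-invariance of the oscillating sequence: translating all positions by an even integer $\delta$ translates values by $\delta$ as well. Inductively, having embedded $\alpha_1\oplus\cdots\oplus\alpha_{j-1}$ using positions in $[1,N]$ with values bounded by $V$, I take any even $\delta\ge\max(N,V)$ and shift $\alpha_j$'s embedding by $\delta$; this places $\alpha_j$ simultaneously to the right of and above the previous block, realizing the direct sum. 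The resulting embedding lies in a sufficiently long even-length prefix of the oscillating sequence, and such prefixes are themselves sum-indecomposable (and so are genuine increasing oscillations), completing the argument.

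The main obstacle is the graph-theoretic step, especially the assertion that permutation graphs admit no induced cycle of length at least $5$; this is standard via the comparability/cocomparability characterization of permutation graphs but deserves at least a brief justification. The remaining ingredients (direct pattern verification, the $K_{1,3}$ case analysis, the sum decomposition, and the shift argument) are either direct or routine.
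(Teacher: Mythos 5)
Your argument is correct and takes a genuinely different route from the cited source. The paper itself does not prove Proposition~\ref{prop-embed-inc-osc}; it cites Brignall, Ru\v{s}kuc, and Vatter and remarks that their proof goes via the \emph{rank encoding}, a word-by-word encoding of permutations that reduces the problem to manipulating a regular language. You instead work structurally with the permutation graph $G_\pi$: avoidance of $321$ gives triangle-freeness, your $K_{1,3}$ case analysis shows that avoidance of $2341$ and $4123$ gives claw-freeness (degree $\le 2$), avoidance of $3412$ rules out induced $C_4$, and longer induced cycles are excluded because $C_n$ ($n\ge 5$) is not a permutation graph; combined with Propositions~\ref{sum-indecomp-connected} and~\ref{inc-osc-path} and the even-shift self-similarity of the oscillating sequence, this reassembles $\pi$ inside a single long increasing oscillation. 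Your approach is more conceptual and dovetails nicely with the graph-theoretic machinery already in this section, whereas the rank-encoding proof is more mechanical but generalizes more readily to enumeration (it is what yields Proposition~\ref{prop-growth-osc}). One spot to tighten: your exclusion of induced cycles $C_n$ for even $n\ge 8$ is only gestured at (you check $n=6$ by hand). The fact is standard but should be stated cleanly: for odd $n\ge 5$, $C_n$ is an odd hole so not a comparability graph, while for even $n\ge 6$, $C_n$ contains an asteroidal triple and cocomparability graphs are AT-free; since permutation graphs are exactly the graphs that are both comparability and cocomparability, no $C_n$ with $n\ge 5$ is a permutation graph. With that one sentence supplied, the proof is complete.
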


The proof of Proposition~\ref{prop-embed-inc-osc} in \cite{brignall:simple-permutat:b} uses an encoding of permutations known as the ``rank encoding\footnote{We refer the reader to Albert, Atkinson, and Ru\v{s}kuc~\cite{albert:regular-closed-:} for a detailed study of the rank encoding.}.''  The following result follows immediately from that work.

\begin{proposition}\label{prop-growth-osc}
The generating function for the class of all permutations contained in some increasing oscillation is $(1-x)/(1-2x-x^3)$, and thus its growth rate is $\kappa$.
\end{proposition}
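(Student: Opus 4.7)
My plan is to combine Proposition~\ref{prop-embed-inc-osc} with the sum-completion machinery of Proposition~\ref{enum-oplus-completion} and then invoke Pringsheim's Theorem. Write $\I$ for the class of permutations contained in some increasing oscillation, so that $\I = \Av(321, 2341, 3412, 4123)$ by Proposition~\ref{prop-embed-inc-osc}.

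The first step is to verify that $\I$ is sum closed. A direct inspection of one-line notation shows that each of $321$, $2341$, $3412$, $4123$ is sum indecomposable, since in each case no proper initial segment has values forming an initial segment of $\{1,2,\dots\}$. Because $\pi \oplus \sigma$ contains a sum indecomposable $\tau$ if and only if one of $\pi$ or $\sigma$ does, this already forces $\I$ to be sum closed. I would then identify the sum indecomposables of $\I$ with the increasing oscillations themselves: every increasing oscillation is sum indecomposable and lies in $\I$ by definition, while conversely any sum indecomposable element of $\I$ is contained in some increasing oscillation, hence is a sum indecomposable pattern of the increasing oscillating sequence, and is therefore itself an increasing oscillation.

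The generating function for the full set of increasing oscillations is already assembled in the discussion preceding Proposition~\ref{inc-osc-path}: for any fixed increasing oscillation of length $k$, the sum indecomposables it contains enumerate to $x + x^2 + 2x^3 + \cdots + 2x^{k-1} + x^k$, so letting $k \to \infty$ yields
\[
f(x) = x + x^2 + \frac{2x^3}{1-x} = \frac{x + x^3}{1 - x}.
\]
Proposition~\ref{enum-oplus-completion} then gives the generating function for $\I$ as
\[
\sum_{n \ge 0} |\I_n| x^n = \frac{1}{1 - f(x)} = \frac{1-x}{(1-x) - (x+x^3)} = \frac{1-x}{1 - 2x - x^3}.
\]

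For the growth rate, note that $1 - 2x - x^3$ has derivative $-2 - 3x^2 < 0$ and value $1$ at $x = 0$, so it has a unique positive real root. Dividing the defining equation $1 + 2\kappa^2 - \kappa^3 = 0$ through by $\kappa^3$ gives $1/\kappa^3 + 2/\kappa - 1 = 0$, showing that $1/\kappa$ is that root. Since $\kappa > 1$, the numerator $1 - x$ does not vanish at $x = 1/\kappa$, so $1/\kappa$ is the smallest positive pole of the generating function, and Pringsheim's Theorem yields $\gr(\I) = \kappa$. The only step requiring any genuine verification is the sum-indecomposability check for the four basis elements, and that is entirely routine.
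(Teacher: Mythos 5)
Your proof is correct, but it takes a genuinely different route from the paper's. The paper simply observes that the result ``follows immediately'' from the rank-encoding argument in Brignall, Ru\v{s}kuc, and Vatter~\cite{brignall:simple-permutat:b} (the source of Proposition~\ref{prop-embed-inc-osc}), so it is essentially a citation. You instead derive it from first principles using the paper's own sum-completion machinery: you note that the four basis elements $321,2341,3412,4123$ are all sum indecomposable (which indeed forces the class to be sum closed, since a sum indecomposable pattern of $\pi\oplus\sigma$ must embed entirely in $\pi$ or in $\sigma$), identify the sum indecomposables of the class as precisely the increasing oscillations, pass to the limit in the polynomial $x+x^2+2x^3+\cdots+2x^{k-1}+x^k$ to get $f=(x+x^3)/(1-x)$, and apply Proposition~\ref{enum-oplus-completion}. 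The algebra then gives $(1-x)/(1-2x-x^3)$, and your verification that $1/\kappa$ is the unique positive root of $1-2x-x^3$ (with no cancellation from the numerator) correctly finishes via Pringsheim. The trade-off is that your argument is self-contained and elementary, avoiding reliance on the external rank-encoding result, at the cost of a few extra routine checks; the paper's choice buys brevity. One minor remark: you could make the identification of the sum indecomposables of the class slightly more explicit by noting that, since the class is a downset and sum closed, it equals $\bigoplus X$ where $X$ is the set of increasing oscillations, which is exactly the hypothesis needed for Proposition~\ref{enum-oplus-completion}.
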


\minisec{Partial well-order}
The poset $(P,\le)$ is said to be {\it partially well-ordered (pwo)\/} if it contains neither an infinite descending chain nor an infinite antichain.  The results stated here have been rediscovered many times and so we will not attempt attribution.

\begin{proposition}\label{pwo-tfae}
The poset $(P,\le)$ without infinite descending chains --- and thus in particular, any permutation class --- is pwo if and only if every infinite sequence of elements of $P$ contains an infinite ascending sequence.
\end{proposition}

Given a poset $(P,\le)$, we denote the set of words (or, sequences) with letters from $P$ as $P^*$.  There are a variety of partial orders on $P^*$, but for our purposes the most important is the {\it subword order\/} (or, {\it scattered subword order\/} or {\it subsequence order\/}) defined by $v\le w$ if there are indices $1\le i_1<i_2<\cdots<i_k=|v|$ such that $v_j\le w_{i_j}$ for all $j\in[k]$.  In this context we have the following result, originally due to Higman~\cite{higman:ordering-by-div:} but rediscovered many times since.

\newtheorem*{higmans-theorem}{Higman's Theorem}
\begin{higmans-theorem}
If $(P,\le)$ is pwo then $P^*$, ordered by the subword order, is also pwo.
\end{higmans-theorem}

Similarly, given a list of posets $(P_1,\le_1),\dots,(P_s,\le_s)$, the product order $(P_1,\le_1)\times\cdots\times(P_s,\le_s)$ is the poset containing the tuples $P_1\times\cdots\times P_s$, equipped with the order $(x_1,\dots,x_s)\le (y_1,\dots,y_s)$ if and only if $x_i\le_i y_i$ for all $i\in[s]$.  As an immediate corollary of Higman's Theorem, we obtain the following.

\begin{proposition}\label{product-pwo}
The product $(P_1,\le_1)\times\cdots\times(P_s,\le_s)$ of a collection of posets is pwo if and only if each of them is pwo.
\end{proposition}

We now specialize to permutation classes.

\begin{proposition}\label{pwo-subclasses-dcc}
The subclasses of a pwo permutation class satisfy the {\it descending chain condition\/}, i.e., if $\C$ is a pwo class, there does not exist a sequence $\C=\C^1\supsetneq \C^2\supsetneq \C^3\supsetneq\cdots$ of permutation classes, or in other words, the subclasses of $\C$ are well-founded under the subset order.
\end{proposition}
\begin{proof}
Suppose to the contrary that $\C$ contains an infinite strictly decreasing sequence $\C=\C^1\supsetneq \C^2\supsetneq \C^3\supsetneq\cdots$ and for each $i$ choose $\beta_i\in\C^i\setminus\C^{i+1}$.  As each $\beta_i$ lies in the pwo class $\C$, Proposition~\ref{pwo-tfae} shows that the sequence $\beta_1,\beta_2,\dots$ contains an ascent, say $\beta_i\le \beta_j$ for some indices $i<j$.  This, however, shows that $\beta_i\in\C^j$, contradicting our choice of $\beta_i$.
\end{proof}

\begin{proposition}\label{pwo-subclasses}
A pwo permutation class contains only countably many subclasses.
\end{proposition}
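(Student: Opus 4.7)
The plan is to inject the collection of subclasses of $\C$ into the countable set of finite subsets of $\C$; this suffices because $\C$, being a set of finite permutations, is countable.

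First I would associate to each subclass $\D\subseteq\C$ its relative basis $B(\D):=\min(\C\setminus\D)$, namely the set of minimal elements of $\C\setminus\D$ under the containment order. Since $\D$ is downward closed, $\C\setminus\D$ is upward closed within $\C$: if $\pi\in\C\setminus\D$ and $\pi\le\sigma\in\C$, then $\sigma\notin\D$, for otherwise $\pi\le\sigma\in\D$ would force $\pi\in\D$. Hence $B(\D)$ is an antichain in $\C$, and $\D$ can be recovered as
$$
\D=\{\pi\in\C:\pi\not\ge\beta\text{ for every }\beta\in B(\D)\},
$$
making the assignment $\D\mapsto B(\D)$ injective. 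Next, the pwo hypothesis on $\C$, invoked through Proposition~\ref{pwo-tfae}, forces every antichain in $\C$, and in particular each $B(\D)$, to be finite. Thus $\D\mapsto B(\D)$ embeds the subclasses of $\C$ into the set of finite subsets of the countable set $\C$, which is itself countable.

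There is no serious obstacle here: the whole argument is essentially the standard observation that a pwo poset has only countably many downsets, specialized to permutation containment.
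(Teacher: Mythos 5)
Your argument is correct and is essentially the paper's own proof, just spelled out in more detail: you both map each subclass $\D$ to its relative basis within $\C$, note that this is an antichain which must be finite by the pwo hypothesis, and conclude by counting finite subsets of the countable set $\C$. The extra care you take in verifying that $\C\setminus\D$ is upward closed and that $\D\mapsto B(\D)$ is injective is welcome but not a departure from the paper's route.
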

\begin{proof}
Let $\C$ be a pwo permutation class.  Every subclass $\D\subseteq\C$ is of the form $\C\cap\Av(B)$ for some antichain $B\subseteq\C$.  Since $\C$ is pwo, all such antichains are finite and thus the set of subclasses of $\C$ is countable.
\end{proof}

Of particular importance to us is the connection between simple permutations and pwo permutation classes:

\begin{proposition}[see Albert and Atkinson~\cite{albert:simple-permutat:} or Brignall~\cite{brignall:a-survey-of-sim:}]\label{fin-simples-pwo}
Every permutation class containing only finitely many simple permutations is pwo.
\end{proposition}

\begin{figure}
\begin{center}
\begin{tabular}{ccc}
\psset{xunit=0.01in, yunit=0.01in}
\psset{linewidth=0.005in}
\begin{pspicture}(0,0)(130,130)
\psaxes[dy=10,Dy=1,dx=10,Dx=1,tickstyle=bottom,showorigin=false,labels=none](0,0)(130,130)
\psframe[linecolor=darkgray,fillstyle=solid,fillcolor=lightgray,linewidth=0.02in](7,17)(23,33)
\psframe[linecolor=darkgray,fillstyle=solid,fillcolor=lightgray,linewidth=0.02in](107,117)(123,133)
\pscircle*(10,20){0.04in}
\pscircle*(20,30){0.04in}
\pscircle*(30,50){0.04in}
\pscircle*(40,10){0.04in}
\pscircle*(50,70){0.04in}
\pscircle*(60,40){0.04in}
\pscircle*(70,90){0.04in}
\pscircle*(80,60){0.04in}
\pscircle*(90,110){0.04in}
\pscircle*(100,80){0.04in}
\pscircle*(110,120){0.04in}
\pscircle*(120,130){0.04in}
\pscircle*(130,100){0.04in}
\end{pspicture}
&
\rule{10pt}{0pt}
&
\psset{xunit=0.01in, yunit=0.01in}
\psset{linewidth=0.02in}
\begin{pspicture}(0,0)(130,130)
\pscircle*(10,20){0.04in}
\pscircle*(20,30){0.04in}
\pscircle*(30,50){0.04in}
\pscircle*(40,10){0.04in}
\pscircle*(50,70){0.04in}
\pscircle*(60,40){0.04in}
\pscircle*(70,90){0.04in}
\pscircle*(80,60){0.04in}
\pscircle*(90,110){0.04in}
\pscircle*(100,80){0.04in}
\pscircle*(110,120){0.04in}
\pscircle*(120,130){0.04in}
\pscircle*(130,100){0.04in}
\psline(10,20)(40,10)
\psline(20,30)(40,10)
\psline(30,50)(40,10)
\psline(30,50)(60,40)
\psline(50,70)(60,40)
\psline(50,70)(80,60)
\psline(70,90)(80,60)
\psline(70,90)(100,80)
\psline(90,110)(100,80)
\psline(90,110)(130,100)
\psline(110,120)(130,100)
\psline(120,130)(130,100)
\end{pspicture}
\end{tabular}
\end{center}
\caption{The plot (left) and permutation graph (right) of $u_4$.}\label{fig-u4}.
\end{figure}
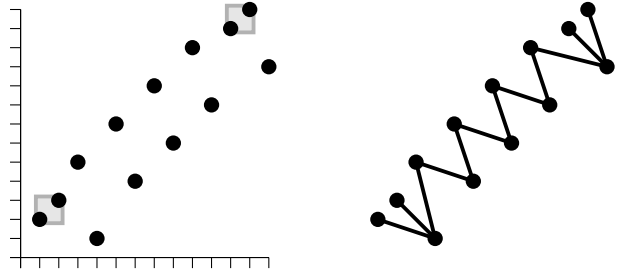

One of the most straightforward infinite antichains of permutations to describe is $U=\{u_1,u_2,\dots\}$ defined by
\begin{eqnarray*}
u_1&=&2,3,5,1\ |\ |\ 6,7,4\\
u_2&=&2,3,5,1\ |\ 7,4\ |\ 8,9,6\\
u_3&=&2,3,5,1\ |\ 7,4,9,6\ |\ 10,11,8\\
&\vdots&\\
u_k&=&2,3,5,1\ |\ 7,4,9,6,11,8,\dots,2k+3,2k\ |\ 2k+4,2k+5,2k+2\\
&\vdots&
\end{eqnarray*}
Here the vertical bars have no mathematical meaning but are meant only to emphasize the different parts of the permutations.  Note that the antichain $U$ is formed by inflating the first and greatest elements of every odd-length increasing oscillation, see Figure~\ref{fig-u4}.  Many variations on this theme are possible, see Vatter~\cite{vatter:permutation-cla}.

\begin{proposition}\label{U-antichain}
The set $U$ forms an infinite antichain of permutations.
\end{proposition}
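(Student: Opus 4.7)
The plan is to argue via inversion graphs that $u_i \le u_j$ forces $i = j$. Since $u_k$ has length $2k+5$, only the case $i < j$ requires work. The key observation, which I would use throughout, is that if $\sigma$ embeds in $\pi$ via positions $p_1 < \cdots < p_{|\sigma|}$, then $G_\sigma$ is isomorphic to the subgraph of $G_\pi$ induced on those positions, since inversions are preserved under order-isomorphism. It therefore suffices to show that $G_{u_i}$ is not an induced subgraph of $G_{u_j}$ when $i < j$.

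The first main step is a direct inversion count (the case $k=4$ is drawn in Figure~\ref{fig-u4}), the outcome of which is that $G_{u_k}$ is a caterpillar tree: its spine is the path on $2k+1$ vertices visiting positions $4, 3, 6, 5, 8, 7, \ldots, 2k+2, 2k+1, 2k+5$ in order, with pendant leaves $1, 2$ attached at the spine endpoint $4$ and pendant leaves $2k+3, 2k+4$ attached at the spine endpoint $2k+5$. The features I need are that every interior spine vertex has degree $2$ and that the only vertices of degree $3$ (and hence the only vertices of degree $\ge 3$) are the two spine endpoints.

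The second and decisive step is a short rigidity argument. Suppose $G_{u_i}$ is realized as an induced subgraph $H$ on a set $S \subseteq V(G_{u_j})$. Because induced subgraphs cannot increase degrees, the two degree-$3$ vertices of $H$ must come from degree-$3$ vertices of $G_{u_j}$, so $S$ contains both spine endpoints of $G_{u_j}$. A spine endpoint retains degree $3$ in $H$ only if all three of its $G_{u_j}$-neighbors (its spine neighbor and its two leaves) lie in $S$, so the four leaves of $G_{u_j}$ are in $S$ as well. Finally, the spine vertices in $S$ must form a contiguous subpath (else $H$ would be disconnected, since $G_{u_j}$ is a tree), and because both endpoints are in $S$, that subpath is the whole spine. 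Hence $S = V(G_{u_j})$, which gives $2i+5 = |S| = 2j+5$ and thus $i = j$, contradicting $i < j$.

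I do not anticipate any real conceptual obstacle; the only genuinely delicate thing is the inversion count itself, since a missed or extraneous edge would spoil the caterpillar structure on which the rigidity step crucially depends.
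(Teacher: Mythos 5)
Your proof is correct and takes the same route as the paper's (Klazar's) argument: reduce permutation containment to the induced subgraph order on permutation graphs, then show that $\{G_{u_1}, G_{u_2}, \dots\}$ is an antichain under that order. The paper dismisses the antichain claim with ``almost as clearly,'' whereas you supply precisely the missing verification --- the caterpillar description of $G_{u_k}$ (spine $4,3,6,5,\dots,2k+2,2k+1,2k+5$ with leaf pairs at both ends, which matches the $u_4$ picture) and the degree/connectivity rigidity argument forcing any induced copy to use every vertex.
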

\newenvironment{U-antichain-proof}{\medskip\noindent {\it Proof, due to Klazar~\cite{klazar:on-the-least-ex:}.\/}}{\qed\bigskip}
\begin{U-antichain-proof}
Clearly the permutation graph $G_\sigma$ must be contained, as an induced subgraph, in $G_\pi$ whenever $\sigma\le\pi$, and almost as clearly, the set of permutation graphs $\{G_{u_1},G_{u_2},\dots\}$ forms an infinite antichain under the induced subgraph order.
\end{U-antichain-proof}

This antichain leads to an upper bound for Answer 2, but first we must make the following observation.

\begin{proposition}\label{non-pwo-uncountable}
Every permutation class containing an infinite antichain contains uncountably many subclasses.
\end{proposition}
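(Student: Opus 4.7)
The plan is to exhibit an injection from the power set of $\mathbb{N}$ into the set of subclasses of $\C$, which immediately gives uncountably many subclasses. Fix an infinite antichain $A=\{a_1,a_2,\dots\}\subseteq\C$; such a set exists by hypothesis. For each subset $S\subseteq\mathbb{N}$, associate the downward closure
$$
\C_S=\{\sigma : \sigma\le a_i\mbox{ for some }i\in S\}.
$$
Each $\C_S$ is a permutation class by construction, and since $A\subseteq\C$ and $\C$ is itself a downset, $\C_S\subseteq\C$.

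The key point is that $S\mapsto\C_S$ is injective. I would argue this by showing that $a_j\in\C_S$ if and only if $j\in S$. The ``if'' direction is immediate. For ``only if'', if $a_j\in\C_S$ then $a_j\le a_i$ for some $i\in S$; since $A$ is an antichain, this forces $i=j$, hence $j\in S$. Consequently $S$ can be recovered from $\C_S$ as $\{j : a_j\in\C_S\}$, so distinct subsets of $\mathbb{N}$ yield distinct subclasses of $\C$. Since $|2^{\mathbb{N}}|$ is uncountable, $\C$ has uncountably many subclasses.

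There is really no obstacle here beyond recording the antichain property correctly; the argument is the standard ``power-set-of-an-antichain'' construction and is in a sense dual to Proposition~\ref{pwo-subclasses}, which shows that pwo classes have only countably many subclasses. Combined with Proposition~\ref{U-antichain}, this proposition will then give the desired upper bound on Answer~2: any class containing the antichain $U$ has uncountably many subclasses, so the growth rate threshold $\kappa$ is at most the growth rate of the smallest class containing $U$.
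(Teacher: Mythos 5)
Your proof is correct and essentially matches the paper's: both exhibit an injection from the power set of the antichain into the subclasses of $\C$, using the antichain property to establish injectivity. The only cosmetic difference is that you take the subclass to be the downward closure of $\{a_i : i\in S\}$, whereas the paper uses $\C\cap\Av(B)$ for $B\subseteq A$; the injectivity argument is the same in either formulation.
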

\begin{proof}
Suppose the class $\C$ contains the infinite antichain $A$.  Then every class of the form $\C\cap\Av(B)$, $B\subseteq A$ is distinct.
\end{proof}

Now note that
$$
U\subseteq \Av(321,3412,4123,23451,134526,134625,314526,314625).
$$
The Maple package {\sc InsEnc}, described in Vatter~\cite{vatter:finding-regular:}, computes the generating function of this class as
$$
\frac{x(1+x+x^2+2x^3+3x^4+3x^5+x^6-x^7-x^9)}{(1+x)(1-2x-x^3)},
$$
which shows (via Pringsheim's Theorem) that its growth rate is $\kappa$.  Therefore there are uncountably many permutation classes with upper growth rate at most $\kappa$.

\minisec{Atomicity}
A permutation class is said to be {\it atomic\/} if it cannot be expressed as the union of two proper subclasses.  Like partial well-order, atomicity (under a variety of names) has been rediscovered numerous times; it seems to date originally to a 1954 article of Fra{\"{\i}}ss{\'e}~\cite{fraisse:sur-lextension-:}.  Murphy undertook a particularly thorough investigation of atomic permutation classes in his thesis~\cite{murphy:restricted-perm:}, and all of the results here can be found there.

It is not difficult to show that the {\it joint embedding property\/} is a necessary and sufficient condition for the permutation class $\C$ to be atomic; this condition states that for all $\pi,\sigma\in\C$, there is a $\tau\in\C$ containing both $\pi$ and $\sigma$.  Fra{\"{\i}}ss{\'e} established another necessary and sufficient condition for atomicity, which we describe only in the permutation context (Fra{\"{\i}}ss{\'e} proved his results for relational structures).  Given two linearly ordered sets (or simply, linear orders) $A$ and $B$ and a bijection $f:A\rightarrow B$, every finite subset $\{a_1<\cdots<a_n\}\subseteq A$ maps to a finite sequence $f(a_1),\dots,f(a_n)\in B$ that is order isomorphic to a unique permutation.  We call the set of permutations that arise in this manner the {\it age of $f$\/}, denoted $\Age(f:A\rightarrow B)$.

\begin{theorem}[Fra{\"{\i}}ss{\'e}~\cite{fraisse:sur-lextension-:}; see also Hodges~{\cite[Section 7.1]{hodges:model-theory:}}]\label{atomic-tfae}
For a permutation class $\C$, the following are equivalent:
\begin{enumerate}
\item[(1)] $\C$ is atomic,
\item[(2)] $\C$ satisfies the joint embedding property, and
\item[(3)] $\C=\Age(f:A\rightarrow B)$ for a bijection $f$ between two countable linear orders $A$ and $B$.
\end{enumerate}
\end{theorem}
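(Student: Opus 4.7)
The plan is to prove the equivalences $(1)\Leftrightarrow(2)$ and $(2)\Leftrightarrow(3)$. The implications $(1)\Leftrightarrow(2)$ and $(3)\Rightarrow(2)$ are essentially unpackings of the definitions; only $(2)\Rightarrow(3)$ requires actual work, via a Fra{\"{\i}}ss{\'e}-style amalgamation argument.

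For $(1)\Rightarrow(2)$, I would argue the contrapositive: if JEP fails at some pair $\pi,\sigma\in\C$, then no $\rho\in\C$ contains both, so every $\rho\in\C$ avoids one of $\pi,\sigma$, giving $\C=(\C\cap\Av(\pi))\cup(\C\cap\Av(\sigma))$. Each piece is a subclass of $\C$ and is proper (since $\pi\in\C$ but $\pi\notin\Av(\pi)$, and similarly for $\sigma$), contradicting atomicity. For $(2)\Rightarrow(1)$, if $\C=\C_1\cup\C_2$ with each $\C_i\subsetneq\C$, pick $\pi\in\C\setminus\C_1$ and $\sigma\in\C\setminus\C_2$, apply JEP to obtain a common extension $\tau\in\C$, and observe that $\tau\in\C_i$ forces $\pi$ or $\sigma$ into $\C_i$ by downward closure. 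For $(3)\Rightarrow(2)$, given $\pi,\sigma\in\Age(f:A\to B)$ realized by finite sets $A_\pi,A_\sigma\subseteq A$, the image of $A_\pi\cup A_\sigma$ under $f$ defines a permutation in $\Age(f)$ containing both $\pi$ and $\sigma$.

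For $(2)\Rightarrow(3)$, the substantive direction, I enumerate $\C=\{\pi_1,\pi_2,\dots\}$ (countable, since the set of all finite permutations is) and inductively build an ascending chain $f_0\subseteq f_1\subseteq\cdots$ of finite bijections between finite subsets $A_n,B_n\subseteq\mathbb{Q}$, where each $f_n$ represents some $\tau_n\in\C$. Starting from $f_0=\emptyset$, given $f_{n-1}$ realizing $\tau_{n-1}$, I apply JEP to $\tau_{n-1}$ and $\pi_n$ to obtain $\tau_n\in\C$ containing both. Fixing an embedding of $\tau_{n-1}$ into $\tau_n$ identifies the domain and range of $f_{n-1}$ with certain entries of $\tau_n$, while the remaining positions and values of $\tau_n$ must be inserted; by the density of $\mathbb{Q}$, I can choose fresh rationals for these additional positions and values at exactly the relative ranks prescribed by $\tau_n$, yielding $f_n$. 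Setting $A=\bigcup A_n$, $B=\bigcup B_n$, and $f=\bigcup f_n$, one checks $\C=\Age(f)$: each $\pi_n\le\tau_n\in\Age(f)$ gives $\C\subseteq\Age(f)$, and conversely every finite subset of $A$ sits inside some $A_n$ and so represents a subpermutation of $\tau_n\in\C$.

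The main obstacle is the insertion step in the Fra{\"{\i}}ss{\'e} construction: one must verify that density of $\mathbb{Q}$ really does allow us to place the new domain and range elements so as to realize exactly $\tau_n$ while leaving the embedding of $\tau_{n-1}$ (and hence $f_{n-1}$) undisturbed. This is routine but is the sole point at which the explicit order structure on $\mathbb{Q}$, rather than just the JEP abstraction, enters the argument, and so deserves a careful treatment.
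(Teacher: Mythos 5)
Your proof is correct, but note that the paper itself does not prove this theorem: it is cited to Fra\"\i ss\'e and to Hodges, so there is no in-paper argument to compare against. Your Fra\"\i ss\'e-chain proof is the standard one and it is sound. The two elementary equivalences $(1)\Leftrightarrow(2)$ and $(3)\Rightarrow(2)$ are handled exactly as they should be --- the contrapositive for $(1)\Rightarrow(2)$ correctly exhibits $\C=(\C\cap\Av(\pi))\cup(\C\cap\Av(\sigma))$ as a union of two proper subclasses, and $(3)\Rightarrow(2)$ just restricts $f$ to $A_\pi\cup A_\sigma$. For $(2)\Rightarrow(3)$, your chain construction uses precisely the ingredients that matter: countability of $\C$, density (and lack of endpoints) of $\mathbb{Q}$ to insert the new positions and values of $\tau_n$ into the gaps of $A_{n-1}$ and $B_{n-1}$ dictated by the chosen embedding $\tau_{n-1}\hookrightarrow\tau_n$, and downward closure of $\C$ to get $\Age(f)\subseteq\C$ in the final verification. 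The one bookkeeping point you flag --- checking that density really lets you realize $\tau_n$ while fixing $f_{n-1}$ --- is indeed routine: each new element of $\tau_n$ sits in a prescribed open interval of $\mathbb{Q}$ (possibly an unbounded one), and these are all nonempty since $\mathbb{Q}$ has no endpoints, so a finite greedy insertion works. You should also note that the base case $f_0=\emptyset$ represents the empty permutation, which lies in every class, so $\tau_0$ exists. With those details in place the argument is complete.
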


We use only the generic properties of atomic classes, i.e., those that hold for any appropriate type of object, but note that atomic classes of permutations are particularly interesting%
\footnote{\newcommand{\nikt}{\mathcal{T}}
For instance, define $\nikt(X,Y)$ as the set of all (necessarily atomic) classes of permutations that can be expressed as $\Age(f:X\rightarrow Y)$.  The following two questions then naturally arise:
\begin{itemize}
\item For a given $X$ and $Y$, can one characterize $\nikt(X,Y)$?  Can it be decided whether a given class lies in $\nikt(X,Y)$?
\item For what linear orders $X$, $Y$, $W$, and $Z$ is $\nikt(X,Y)\subseteq\nikt(W,Z)$?
\end{itemize}
For some partial answers to these questions, the reader is referred to Atkinson, Murphy, and Ru\v{s}kuc~\cite{atkinson:pattern-avoidan:} and Huczynska and Ru\v{s}kuc~\cite{huczynska:pattern-classes:}.}.

In addition to Theorem~\ref{atomic-tfae}, we need several results about atomicity for pwo classes.  For the first, we follow the proof given by Murphy~\cite{murphy:restricted-perm:}.


\begin{proposition}\label{pwo-atomic-union}
Every pwo permutation class can be expressed as a finite union of atomic classes.
\end{proposition}
\begin{proof}
Consider the binary tree whose root is the pwo class $\C$, all of whose leaves are atomic classes, and in which the children of the non-atomic class $\D$ are two proper subclasses $\D^1,\D^2\subsetneq\D$ such that $\D^1\cup\D^2=\D$.  Because $\C$ is pwo its subclasses satisfy by the descending chain condition by Proposition~\ref{pwo-subclasses-dcc}, so this tree contains no infinite paths and thus is finite by K\"onig's Lemma; its leaves give the desired atomic classes.
\end{proof}

The problem of computing growth rates of pwo classes can then be reduced to that of computing growth rates of atomic classes:

\begin{proposition}\label{pwo-atomic-gr}
For a pwo permutation class $\C$, $\ugr(\C)$ is equal to the maximum upper growth rate of an atomic subclass of $\C$.
\end{proposition}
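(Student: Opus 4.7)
The plan is to decompose $\C$ into finitely many atomic pieces via Proposition~\ref{pwo-atomic-union}, writing $\C=\A_1\cup\cdots\cup\A_k$ with each $\A_i$ atomic, and then to compare growth rates via a simple union bound. One direction is immediate: each $\A_i$ is a subclass of $\C$, so $|(\A_i)_n|\le|\C_n|$ and hence $\ugr(\A_i)\le\ugr(\C)$; thus the supremum of $\ugr(\A)$ over all atomic subclasses $\A\subseteq\C$ is at most $\ugr(\C)$.

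For the opposite inequality, the union bound gives $|\C_n|\le\sum_{i=1}^{k}|(\A_i)_n|\le k\cdot\max_i|(\A_i)_n|$, and so $\sqrt[n]{|\C_n|}\le\sqrt[n]{k}\cdot\max_i\sqrt[n]{|(\A_i)_n|}$.  A standard estimate shows that, for a finite collection of sequences, the limsup of the pointwise maximum equals the maximum of the limsups: given $\epsilon>0$, for each $i$ pick $N_i$ with $|(\A_i)_n|\le(\ugr(\A_i)+\epsilon)^n$ for all $n\ge N_i$, and then for $n\ge\max_i N_i$ the maximum is bounded by $(\max_i\ugr(\A_i)+\epsilon)^n$.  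Taking the limsup and letting $\epsilon\to 0$ yields $\ugr(\C)\le\max_i\ugr(\A_i)$.  Combined with the first paragraph this pins $\ugr(\C)$ to the actual maximum, which is attained by whichever $\A_i$ achieves it, so the ``maximum'' in the statement is genuine rather than a mere supremum.

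There is no real obstacle here: the argument is entirely routine once one has the finite-union decomposition, and all of the substance of the proposition is already carried by Proposition~\ref{pwo-atomic-union}.
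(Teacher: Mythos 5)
Your proposal is correct and takes essentially the same approach as the paper: both invoke Proposition~\ref{pwo-atomic-union} to decompose $\C$ into finitely many atomic pieces and then show one of them must carry the growth rate. The only (minor) difference is in bookkeeping: the paper passes to a subsequence on which $\sqrt[n]{|\C_n|}$ converges and uses a pigeonhole argument to find one $\C^j$ with $|\C^j_{n}|\ge |\C_n|/m$ along a further subsequence, whereas you use the union bound $|\C_n|\le k\max_i|(\A_i)_n|$ together with the standard fact that $\limsup$ of a finite pointwise max equals the max of the $\limsup$s — the same idea packaged slightly more cleanly.
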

\begin{proof}
Using Proposition~\ref{pwo-atomic-union}, write $\C$ as a union of finitely many atomic subclasses, $\C=\C^1\cup\cdots\cup\C^m$, and then choose an infinite subsequence $n_1<n_2<\cdots$ such that $\ugr(\C)=\lim\sqrt[n_i]{|\C_{n_i}|}$.  For each $n$, at least $1/m$ of the permutations in $\C_n$ lie in a particular $\C^j$, and thus there is an infinite subsequence of the $n_i$s, say $n_1'<n_2'<\cdots$, such that at least $1/m$ of the permutations in $\C_{n_i'}$ lie in the same $\C^j_{n_i'}$ for all $i$.  Then
$$
\ugr(\C)=\lim_{i\rightarrow\infty}\sqrt[n_i]{|\C_{n_i}|}=\lim_{i\rightarrow\infty}\sqrt[n_i']{|\C_{n_i'}|}
\le
\limsup_{i\rightarrow\infty}\sqrt[n_i']{m|\C^j_{n_i'}|}
\le
\ugr(\C^j).
$$
The reverse inequality is obvious, proving the proposition.
\end{proof}

\section{Generalized Grid Classes}\label{sec-generalized-grids}

When discussing specific grid classes (which will rarely be necessary), we index matrices beginning from the lower left-hand corner and we reverse the rows and columns, so $\M_{3,2}$ denotes for us the entry of $\M$ in the $3$rd column from the left and $2$nd row from the bottom.  Below we include a $3\times 2$ matrix with its entries labeled:
$$
\left(
\begin{footnotesize}
\begin{array}{rrr}
(1,2)&(2,2)&(3,2)\\
(1,1)&(2,1)&(3,1)
\end{array}
\end{footnotesize}
\right).
$$

Roughly, the grid class of a matrix $\M$ is the set of all permutations that can be divided into a finite number of blocks, each containing a subsequence of a prescribed form dictated by $\M$.  Before grid classes can be defined formally, there are some notational prerequisites to be covered.

Given a permutation $\pi$ of length $n$ and sets $X,Y\subseteq[n]$, we write $\pi(X\times Y)$ for the permutation that is order isomorphic to the subsequence of $\pi$ with indices from $X$ and values in $Y$.  For example, to compute $391867452([3,7]\times[2,6])$ we consider the subsequence of entries in indices $3$ through $7$, $18674$, which have values between $2$ and $6$; in this case the subsequence is $64$, so $391867452([3,7]\times[2,6])=21$.

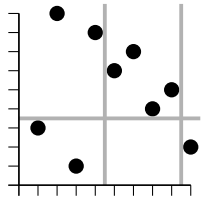
\begin{figure}
\begin{center}
\psset{xunit=0.01in, yunit=0.01in}
\psset{linewidth=0.005in}
\begin{pspicture}(0,0)(90,90)
\pscircle*(10,30){0.04in}
\pscircle*(20,90){0.04in}
\pscircle*(30,10){0.04in}
\pscircle*(40,80){0.04in}
\pscircle*(50,60){0.04in}
\pscircle*(60,70){0.04in}
\pscircle*(70,40){0.04in}
\pscircle*(80,50){0.04in}
\pscircle*(90,20){0.04in}
\psline[linecolor=darkgray,linestyle=solid,linewidth=0.02in](0,35)(95,35)
\psline[linecolor=darkgray,linestyle=solid,linewidth=0.02in](45,0)(45,95)
\psline[linecolor=darkgray,linestyle=solid,linewidth=0.02in](85,0)(85,95)
\psaxes[dy=10,Dy=1,dx=10,Dx=1,tickstyle=bottom,showorigin=false,labels=none](0,0)(90,90)
\end{pspicture}
\end{center}
\caption[]{An $\left(\begin{footnotesize}
\begin{array}{ccc}
\Av(12)&\Av(321)&\\
\Av(12)&&\Av(21)
\end{array}
\end{footnotesize}
\right)
$-gridding of the permutation $391867452$; the column divisions in the plot are $c_1,c_2,c_3,c_4=1,5,9,10$ and the row divisions are $r_1,r_2,r_3=1,4,10$.  Here and in what follows, we suppress $\emptyset$ entries from our matrices.}\label{fig-grid-391867452}
\end{figure}

Suppose that $\M$ is a $t\times u$ matrix (meaning, in our indexing, that $\M$ has $t$ columns and $u$ rows) whose entries are permutation classes.  An {\it $\M$-gridding\/} of the permutation $\pi$ of length $n$ is a pair of sequences $1=c_1\le\cdots\le c_{t+1}=n+1$ (the column divisions) and $1=r_1\le\cdots\le r_{u+1}=n+1$ (the row divisions) such that $\pi([c_k,c_{k+1})\times[r_\ell,r_{\ell+1}))$ is either empty or is a member of $\M_{k,\ell}$ for all $k\in[t]$ and $\ell\in[u]$.  Figure~\ref{fig-grid-391867452} shows an example.

The {\it grid class of $\M$\/}, written $\Grid(\M)$, consists of all permutations which possess an $\M$-gridding.  Furthermore, we say that the permutation class $\C$ is itself {\it $\M$-griddable\/} if $\C\subseteq\Grid(\M)$.  Our treatment of griddability is more general than earlier definitions from~\cite{huczynska:grid-classes-an:,murphy:profile-classes:,waton:on-permutation-:} which consider only {\it monotone grid classes\/}, defined as classes of the form $\Grid(\M)$ where each entry of $\M$ is $\Av(12)$, $\Av(21)$, or the empty class $\emptyset$.

We sometimes need to consider particular griddings of permutations, and in this case refer to a permutation together with an $\M$-gridding (if it has one) as an {\it $\M$-gridded permutation\/} (as opposed to an $\M$-griddable permutation); the set of all $\M$-gridded permutations may contain many different $\M$-griddings of the same permutation.  However, as demonstrated by the next proposition, this does not affect the rough asymptotics we are concerned with.

\begin{proposition}\label{gridded-gr}
For a matrix $\M$ of permutation classes and an $\M$-griddable class $\C$, the upper (resp., lower) growth rate of $\C$ is equal to the upper (resp., lower) growth rate of the sequence enumerating the $\M$-gridded permutations in $\C$.
\end{proposition}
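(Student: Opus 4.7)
The plan is to sandwich the sequence $(g_n)$ counting $\M$-gridded permutations in $\C$ between $|\C_n|$ and a polynomial multiple of $|\C_n|$; since taking $n$-th roots kills polynomial factors, the two sequences will have the same upper and lower growth rates.

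First I would observe the easy lower bound: if $\pi\in\C_n$ then, because $\C$ is $\M$-griddable, $\pi$ has at least one $\M$-gridding, and so $|\C_n|\le g_n$, where $g_n$ denotes the number of $\M$-gridded permutations in $\C$ of length $n$. For the reverse direction, I would bound how many distinct $\M$-griddings a single permutation $\pi\in\C_n$ can carry. Writing $\M$ as a $t\times u$ matrix, an $\M$-gridding is specified by two weakly increasing sequences $1=c_1\le\cdots\le c_{t+1}=n+1$ and $1=r_1\le\cdots\le r_{u+1}=n+1$. The number of such column sequences is at most $\binom{n+t-1}{t-1}$ and similarly for row sequences, so the number of $\M$-griddings of $\pi$ is at most
$$
p(n)=\binom{n+t-1}{t-1}\binom{n+u-1}{u-1},
$$
a polynomial in $n$ of degree $t+u-2$. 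Consequently $g_n\le p(n)\,|\C_n|$.

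Combining the two inequalities gives $|\C_n|\le g_n\le p(n)\,|\C_n|$, hence
$$
\sqrt[n]{|\C_n|}\;\le\;\sqrt[n]{g_n}\;\le\;\sqrt[n]{p(n)}\cdot\sqrt[n]{|\C_n|}.
$$
Since $\sqrt[n]{p(n)}\to 1$ as $n\to\infty$, taking $\limsup$ yields $\ugr(\C)=\limsup\sqrt[n]{g_n}$, and taking $\liminf$ yields $\lgr(\C)=\liminf\sqrt[n]{g_n}$, which is what we wanted.

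There isn't really a serious obstacle here; the only thing to be slightly careful about is phrasing the polynomial bound correctly, making sure that the count of admissible division sequences is independent of $\pi$ and depends only on $n$, $t$, and $u$. Everything else is a direct squeezing argument together with the standard fact that $n$-th roots of polynomials tend to $1$.
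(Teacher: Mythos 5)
Your proof is correct and follows essentially the same squeeze argument as the paper: bound the number of $\M$-griddings of a single permutation by a polynomial in $n$ depending only on the dimensions $t\times u$ of $\M$, observe $|\C_n|\le g_n\le p(n)\,|\C_n|$, and take $n$-th roots. The only cosmetic difference is that your bound $\binom{n+t-1}{t-1}\binom{n+u-1}{u-1}$ is slightly tighter than the paper's $\binom{n+t}{t}\binom{n+u}{u}$, but since both are polynomial this does not affect the argument.
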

\begin{proof}
Suppose that $\M$ is of size $t\times u$ and let $g_n$ denote the number of $\M$-gridded permutations of length $n$ in $\C$.  As every permutation in $\C$ has at least one $\M$-gridding ($\C$ is $\M$-griddable) and no permutation of length $n$ possesses more than ${n+t\choose t}{n+u\choose u}$ different $\M$-griddings (the column divisions form a multiset of size $t$ chosen from the set $[n+1]$, the row divisions, a multiset of size $u$) we get that
$$
g_n/{n+t\choose t}{n+u\choose u}\le |\C_n|\le g_n,
$$
from which the proposition immediately follows.
\end{proof}

We say that the class $\C$ is {\it $\{\D^1,\dots,\D^m\}$-griddable\/} if $\C$ is $\M$-griddable for some matrix $\M$ whose entries are all either empty or subclasses of some $\D^i$.  In the $m=1$ case we abbreviate ``$\{\D\}$-griddable'' to ``$\D$-griddable'', a situation which, as the following proposition shows, is no less general.

\begin{proposition}\label{grid-union}
A permutation class is $\{\D^1,\dots,\D^m\}$-griddable if and only if it is $\D^1\cup\cdots\cup\D^m$-griddable.
\end{proposition}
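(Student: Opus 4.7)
The forward direction is tautological: a matrix $\M$ witnessing $\{\D^1,\dots,\D^m\}$-griddability has each nonempty entry contained in some $\D^i$ and hence in $\D^1\cup\cdots\cup\D^m$, so the same matrix witnesses $(\D^1\cup\cdots\cup\D^m)$-griddability. The substantive direction is the reverse. Suppose $\C\subseteq\Grid(\M)$ for some $t\times u$ matrix $\M$ whose nonempty entries are subclasses of $\D^1\cup\cdots\cup\D^m$. The plan is to refine $\M$ into a larger matrix $\M'$, each of whose entries is (empty or) contained in a single $\D^i$, in such a way that every $\M$-gridding of every $\pi\in\C$ can be promoted to an $\M'$-gridding. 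The key observation is that in any fixed $\M$-gridded permutation, each block $\sigma^{k,\ell}:=\pi([c_k,c_{k+1})\times[r_\ell,r_{\ell+1}))$ lies in some particular $\D^{i(k,\ell)}$, but the index $i(k,\ell)$ generally depends on the permutation and gridding rather than only on the cell.

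To accommodate every possible choice of indices simultaneously, I would replace each column $k$ of $\M$ by $m^u$ parallel sub-columns indexed by tuples $(i_1,\ldots,i_u)\in[m]^u$, defining the entry of $\M'$ in row $\ell$ of sub-column $(k,i_1,\ldots,i_u)$ to be $\M_{k,\ell}\cap\D^{i_\ell}$; the row divisions are kept unchanged. Given any $\M$-gridding of $\pi$, choose for each cell an index $i(k,\ell)$ with $\sigma^{k,\ell}\in\D^{i(k,\ell)}$, and then route the entire position interval $[c_k,c_{k+1})$ into the one sub-column indexed by $(i(k,1),\ldots,i(k,u))$, leaving the remaining $m^u-1$ sub-columns of column $k$ with empty position range. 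The sub-cell in row $\ell$ of the chosen sub-column then receives precisely $\sigma^{k,\ell}\in\M_{k,\ell}\cap\D^{i(k,\ell)}$, so this is a valid $\M'$-gridding, and since every entry of $\M'$ is a subclass of some single $\D^i$, this $\M'$ witnesses $\{\D^1,\dots,\D^m\}$-griddability.

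The main subtlety I anticipate is justifying why $m^u$ sub-columns per original column are required rather than merely $m$. Subdividing each row alone (or symmetrically, each column alone) by the $m$ values of $i$ fails, because different cells sharing a row can have entries that interleave arbitrarily in value within the row's value interval, and no single row-wide partition can separate them by ``type'' $i$; the same obstruction arises symmetrically for a plain column-wise subdivision. Tagging each column with a full tuple of row-types dissolves this obstruction by letting the chosen type vary with $\ell$ independently from column to column, which is precisely what is needed to route the blocks of a given $\M$-gridded permutation to entries of $\M'$ that lie in a single $\D^i$.
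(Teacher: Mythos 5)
Your proof is correct, and your construction differs from the paper's in a concrete way worth noting. The paper (after reducing, as you do in the forward direction, to the case where every entry of $\M$ equals $\D^1\cup\cdots\cup\D^m$) observes that each $\M$-gridded permutation in $\C$ satisfies the constraints of at least one of the $m^{tu}$ matrices with entries from $\{\D^1,\dots,\D^m\}$, and then takes $\N$ to be the block-diagonal \emph{direct sum} of all of those matrices; a given permutation is routed wholesale into the diagonal block it satisfies, with all other blocks degenerate. You instead split each of the $t$ columns of $\M$ into $m^u$ sub-columns tagged by row-type tuples, and route each original column independently into its matching sub-column, keeping the rows unchanged. Both are correct enumerate-over-type-assignments arguments, but the bookkeeping is different: your refinement is more economical, giving a $(tm^u)\times u$ matrix rather than the paper's $(tm^{tu})\times(um^{tu})$ matrix, and it decouples the per-column choices rather than fixing a single global type-assignment. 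The price you pay is the extra justification (which you supply) for why $m$ sub-columns per column would not suffice and a full tuple of row-types is needed; the paper's cruder construction sidesteps that discussion entirely since it never tries to share a block between two differently-typed grid cells.
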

\begin{proof}
If the class $\C$ is $\{\D^1,\dots,\D^m\}$-griddable then it is clearly $\D^1\cup\cdots\cup\D^m$-griddable.  For the other direction, suppose that $\C$ is $\M$-griddable for a $t\times u$ matrix $\M$ whose entries are all (without loss of generality) equal to $\D^1\cup\cdots\cup\D^m$.  Thus for every cell $(j,k)$, the entries in cell $(j,k)$ in any $\M$-gridding of any permutation $\pi\in\C$ lie in one of the classes $\D^i$.  Therefore every $\pi\in\C$ is $\N$ griddable for the matrix $\N$ which consists of the direct sum\footnote{The {\it direct sum\/} of the matrices $A$ and $B$ is defined, in our indexing system, as $\left(\begin{array}{cc}0&B\\ A&0\end{array}\right)$.}
of every one of the (finitely many) $t\times u$ matrices with entries from $\{\D^1,\dots,\D^m\}$.
\end{proof}

It is useful to have several different perspectives of griddability, which require a bit more notation.  Given a permutation class $\D$, we say that the permutation $\pi$ of length $n$ can be {\it covered by $s$ $\D$-rectangles\/} if there are (not necessarily disjoint) rectangles $[w_1,x_1]\times[y_1,z_1]$,$\dots$,$[w_s,x_s]\times[y_s,z_s]\subseteq[n]\times[n]$ such that
\begin{itemize}
\item for each $i\in[s]$, $\pi([w_i,x_i]\times[y_i,z_i])\in\D$, and
\item for each $i\in[n]$, the point $(i,\pi(i))$ lies in $\displaystyle\bigcup_{i\in[s]} [w_i,x_i]\times[y_i,z_i]$.
\end{itemize}

For the third perspective, we say that the line $L$ {\it slices\/} the rectangle $R$ if $L$ intersects the interior of $R$.  If $\R$ is a collection of rectangles and $\L$ a collection of lines, we say that $\L$ slices $\R$ if every rectangle in $\R$ is sliced by some line from $\L$.  The rectangles we are interested in are always {\it axis-parallel\/}, meaning that each of their sides is parallel either to the $x$- or $y$-axis.  (Others use the term {\it axes-aligned\/} for such rectangles.)

\begin{proposition}\label{grid-coverings}
For permutation classes $\C$ and $\D$ the following are equivalent:
\begin{enumerate}
\item[(1)] $\C$ is $\D$-griddable,
\item[(2)] there is a constant $\ell$ so that for every permutation $\pi\in\C$, the set
$$
\{\mbox{axis-parallel rectangles $R$} : \pi(R)\not\in\D\}
$$
can be sliced by a collection of $\ell$ horizontal and vertical lines, and
\item[(3)] there is a constant $s$ so that every permutation in $\C$ can be covered by $s$ $\D$-rectangles.
\end{enumerate}
\end{proposition}
\begin{proof}
To begin with, if $\C$ is $\D$-griddable then $\C$ is $\M$-griddable for a matrix $\M$ of some size, say $t\times u$, whose entries consist of subclasses of $\D$.  Thus for every permutation $\pi\in\C_n$ there are column and row divisions $1=c_1\le\cdots\le c_{t+1}=n+1$ and $1=r_1\le\cdots\le r_{u+1}=n+1$ so that every subpermutation $\pi([c_k,c_{k+1})\times[r_\ell,r_{\ell+1}))$ lies in $\D$.  Therefore the corresponding lines, $x=c_1,\dots,x=c_t,y=r_1,\dots,y=r_u$, slice the given collection of rectangles, verifying that (1) implies (2).  That (2) implies (3) is similarly clear: any such collection of lines will slice the plane into a collection of rectangles which contain points order isomorphic to elements of $\D$.

This leaves us to establish that (3) implies (1).  Suppose that the permutation $\pi$ of length $n$ is covered by the $\D$-rectangles $[w_1,x_1]\times[y_1,z_1]$, $\dots$, $[w_s,x_s]\times[y_s,z_s]\subseteq[n]\times[n]$.  Define the indices $c_1,\dots,c_{2s}$ and $r_1,\dots,r_{2s}$ by
\begin{eqnarray*}
\{c_1\le\cdots\le c_{2s}\}&=&\{w_1,x_1,\dots,w_s,x_s\},\\
\{r_1\le\cdots\le r_{2s}\}&=&\{y_1,z_1,\dots,y_s,z_s\}.
\end{eqnarray*}
Since these rectangles cover $\pi$, we must have $c_1=r_1=1$ and $c_{2s}=r_{2s}=n$.  Now we claim that these sets of indices give a $(2s-1)\times (2s-1)$ $\D$-gridding of $\pi$.

To prove this claim it suffices to show that $\pi([c_k,c_{k+1}]\times[r_\ell,r_{\ell+1}])\in\D$ for every $k,\ell\in[2s-1]$.  Because the rectangles given cover $\pi$, the point $(c_k,r_\ell)$ lies in at least one rectangle, say $[w_m,x_m]\times[y_m,z_m]$.  Thus $c_k\ge w_m$ and $r_\ell\ge y_m$ and, because of the ordering of the $c$s and $r$s, we have $c_{k+1}\le x_m$ and $r_{\ell+1}\le z_m$.  Therefore $[c_k,c_{k+1}]\times[r_\ell,r_{\ell+1}]$ is contained in $[w_m,x_m]\times[y_m,z_m]$ and so $\pi([c_k,c_{k+1}]\times[r_\ell,r_{\ell+1}])\in\D$.

Therefore, if (3) holds, then the above argument implies that every permutation in $\C$ is $\M$-griddable for the $(2s-1)\times(2s-1)$ matrix $\M$ whose every entry is $\D$, and thus $\C$ is $\D$-griddable, as desired.
\end{proof}

This language makes the following already fairly obvious facts a bit easier to prove.

\begin{proposition}\label{grid-transitivity}
If $\C$ is $\D$-griddable and $\D$ is $\E$-griddable then $\C$ is $\E$-griddable.
\end{proposition}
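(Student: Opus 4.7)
The plan is to exploit the rectangle-covering characterization from Proposition~\ref{grid-coverings}, specifically the equivalence between griddability (condition (1)) and admitting a uniformly bounded cover by $\D$- or $\E$-rectangles (condition (3)). This avoids having to fuse two matrices directly.

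First I would invoke Proposition~\ref{grid-coverings} twice to extract constants. From the hypothesis that $\C$ is $\D$-griddable I get a constant $s$ such that every $\pi \in \C$ can be covered by $s$ $\D$-rectangles $R_1, \ldots, R_s$. From the hypothesis that $\D$ is $\E$-griddable I get a constant $t$ such that every $\sigma \in \D$ can be covered by $t$ $\E$-rectangles.

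Next, I would fix $\pi \in \C$ of length $n$ with its covering $R_1, \ldots, R_s$. For each $i$, the subpermutation $\pi(R_i)$ lies in $\D$, so it admits a covering by $t$ $\E$-rectangles in its own coordinate system. The key step is to pull each such $\E$-rectangle back to $\pi$: if $R_i = [w_i, x_i] \times [y_i, z_i]$ and $S$ is an $\E$-rectangle for $\pi(R_i)$, then $S$ is bounded by certain indices and values of $\pi(R_i)$, which correspond to actual indices in $[w_i,x_i]$ and actual values in $[y_i,z_i]$ for $\pi$. The resulting axis-parallel rectangle in $[n] \times [n]$ picks out exactly the same set of points as $S$ did, and these are order isomorphic to the same element of $\E$. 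Doing this for every $i$ yields a cover of $\pi$ by at most $st$ $\E$-rectangles, and since the bound $st$ does not depend on $\pi$, Proposition~\ref{grid-coverings} gives that $\C$ is $\E$-griddable.

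The main obstacle, such as it is, lies in this pullback: I have to confirm that ``$\E$-rectangle of a sub-permutation of $\pi$'' translates cleanly into ``$\E$-rectangle of $\pi$'', and that the covering property is preserved under the pullback (every entry of $\pi$ inside some $R_i$ ends up inside the pullback of the $\E$-rectangle of $\pi(R_i)$ that contains it). Both reduce to the elementary observation that order-isomorphism of subpermutations is compatible with restriction to subrectangles, so this step is essentially bookkeeping rather than substantive mathematics.
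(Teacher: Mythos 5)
Your proof is correct and follows essentially the same route as the paper: both use the Proposition~\ref{grid-coverings} rectangle-covering characterization to obtain constants $s$ and $t$, then cover each $\pi\in\C$ by $st$ $\E$-rectangles. The paper elides the pullback bookkeeping you spell out, but the argument is the same.
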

\begin{proof}
Since $\C$ is $\D$-griddable, Proposition~\ref{grid-coverings} shows that there is a constant $s$ so that every permutation in $\C$ can be covered by $s$ $\D$-rectangles.  Similarly, there is a constant $t$ so that every permutation in $\D$, and thus every $\D$-rectangle, can be covered by $t$ $\E$-rectangles.  This shows that every permutation in $\C$ can be covered by $st$ $\E$-rectangles, which, by Proposition~\ref{grid-coverings}, establishes the proposition.
\end{proof}

\begin{proposition}\label{grid-intersection}
If $\C$ is both $\D$- and $\E$-griddable then $\C$ is $\D\cap\E$-griddable.
\end{proposition}
\begin{proof}
Since $\C$ is $\D$-griddable, Proposition~\ref{grid-coverings} shows that there is a constant $\ell$ such that for every permutation $\pi\in\C$, the set of axes parallel rectangles which are not order isomorphic to a permutation in $\D$ can be sliced by a collection of $\ell$ horizontal and vertical lines.  Similarly, there is a constant $k$ such that this holds for the axes parallel rectangles which are not order isomorphic to a permutation in $\E$.  Thus these $\ell+k$ lines taken together slice every rectangle which is not order isomorphic to a permutation in $\D\cap\E$.
\end{proof}

\begin{proposition}\label{grid-two-union}
If $\C$ is $\E$-griddable and $\D$ is $\mathcal{F}$-griddable, then $\C\cup\D$ is $\E\cup\mathcal{F}$-griddable.
\end{proposition}
\begin{proof}
Proposition~\ref{grid-coverings} shows that there is a constant $s$ so that every permutation in $\C$ can be covered by $s$ $\E$-rectangles while there is another constant $t$ so that every permutation in $\D$ can be covered by $t$ $\mathcal{F}$-rectangles.  Therefore every permutation in $\C\cup\D$ can be covered by $\max(s,t)$ $\E\cup\mathcal{F}$-rectangles.
\end{proof}

\begin{figure}
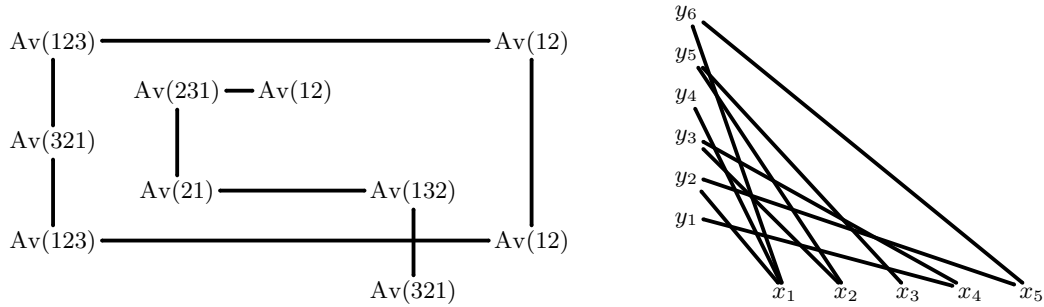

\begin{footnotesize}
$$
\begin{array}{ccc}
\begin{psmatrix}[rowsep=10pt,colsep=14pt,nodesep=2pt,linewidth=0.02in]
\Av(123)&&&&\Av(12)\\
&\Av(231)&\Av(12)&&\\
\Av(321)&&&&\\
&\Av(21)&&\Av(132)&\\
\Av(123)&&&&\Av(12)\\
&&&\Av(321)
\ncline{c-c}{1,1}{1,5}
\ncline{c-c}{1,1}{3,1}
\ncline{c-c}{3,1}{5,1}
\ncline{c-c}{5,1}{5,5}
\ncline{c-c}{5,5}{1,5}
\ncline{c-c}{2,2}{2,3}
\ncline{c-c}{2,2}{4,2}
\ncline{c-c}{4,2}{4,4}
\ncline{c-c}{4,4}{6,4}
\end{psmatrix}
&\rule{20pt}{0pt}&
\begin{psmatrix}[rowsep=10pt,colsep=14pt,nodesep=2pt,linewidth=0.02in]
y_6\\
y_5\\
y_4\\
y_3\\
y_2\\
y_1\\\\
&&x_1&x_2&x_3&x_4&x_5
\ncline{c-c}{1,1}{8,3}
\ncline{c-c}{1,1}{8,7}
\ncline{c-c}{2,1}{8,4}
\ncline{c-c}{2,1}{8,5}
\ncline{c-c}{3,1}{8,3}
\ncline{c-c}{4,1}{8,4}
\ncline{c-c}{4,1}{8,6}
\ncline{c-c}{5,1}{8,3}
\ncline{c-c}{5,1}{8,7}
\ncline{c-c}{6,1}{8,6}
\end{psmatrix}
\end{array}
$$
\end{footnotesize}
\caption{The cell graph (left) and row/column graph (right) of a matrix of permutation classes.}\label{fig-graph-grid}
\end{figure}

We conclude this section by discussing two different notions of the graph of a grid class.  First we define the {\it row/column graph\/} of the $t\times u$ matrix $\M$ of permutation classes as the bipartite graph on the vertices $x_1,\dots,x_t,y_1,\dots,y_u$ where $x_i\sim y_j$ if and only if $\M_{i,j}\neq\emptyset$.  The properties of this graph allow us to determine if a monotone grid class is partially well-ordered:

\begin{theorem}[Murphy and Vatter~\cite{murphy:profile-classes:}]\label{pwo-forest}
If $\M$ contains only monotone and empty classes then $\Grid(\M)$ is pwo if and only if the row/column graph of $\M$ is a forest.
\end{theorem}

A simpler proof of Theorem~\ref{pwo-forest} was given by Vatter and Waton~\cite{vatter:on-partial-well:}, while a generalization was proved by Brignall~\cite{brignall:grid-classes-an:}.

For our purposes, a different graph, called the {\it cell graph\/} of $\M$ is more useful.  This is the graph on the vertices $\{(i,j) : \M_{i,j}\neq\emptyset\}$ in which $(i,j)\sim(k,\ell)$ if $(i,j)$ and $(k,\ell)$ share either a row or a column, and there are no nonempty cells between them in this row or column.  Further, we label the vertex $(i,j)$ in this graph by the class it corresponds to, $\M_{i,j}$.  Figure~\ref{fig-graph-grid} shows the cell graph of a matrix.  Note that the distinction between cell and row/column graphs does not affect Theorem~\ref{pwo-forest}:

\begin{proposition}
Let $\M$ be a matrix of permutation classes.  The row/column graph of $\M$ is a forest if and only if the cell graph of $\M$ is a forest.
\end{proposition}
\begin{proof}
We begin by considering a cycle $x_{i_1}\sim y_{j_1}\sim \cdots\sim x_{i_k}\sim y_{j_k}\sim x_{i_1}$ in the row/column graph of $\M$ (as this graph is bipartite, such a cycle must be of even length and alternate between $x$ and $y$ vertices).  By definition, this means that $\M_{i_1,j_1},\dots,\M_{i_k,j_k},\M_{i_1,j_k}\neq\emptyset$.  This does not necessarily mean that $(i_1,j_1)\sim\cdots\sim(i_k,j_k)\sim(i_1,j_k)\sim(i_1,j_1)$ in the cell graph, because there may be nonempty cells in between these cells, but by including these cells we do find a cycle in the cell graph of $\M$.  In the other direction, such cells must similarly be removed, the details of which we omit.
\end{proof}

We also need a coarsening of the permutation containment order; for two $\M$-gridded permutations $\sigma$ and $\pi$ of respective lengths $k$ and $n$, we say that {\it $\pi$ contains a gridded copy of $\sigma$\/} and write $\sigma\le_g\pi$ if there are indices $1\le i_1<\cdots<i_k\le n$ such that the subsequence $\pi(i_1)\cdots\pi(i_k)$ is order isomorphic to $\sigma$ (this is the normal permutation containment order) and, further, that for every $j\in[k]$, $\sigma(j)$ and $\pi(i_j)$ lie in the same cell of $\M$ in the accompanying $\M$-griddings of $\sigma$ and $\pi$.  This is indeed coarser than the normal ordering because $\sigma\le_g\pi\implies\sigma\le\pi$.

Finally, we define a {\it connected component\/} of the matrix $\M$ of permutation classes to be a (necessarily rectangular) submatrix of $\M$ whose cells give rise to a connected component of the cell graph of $\M$.  Further, if $X$ is any subset of columns of $\M$ and $Y$ is any subset of rows of $\M$ then we write $\M(X\times Y)$ to denote the submatrix of $\M$ formed by the cells $X\times Y$.  For example, the matrix $\M$ whose cell graph is depicted in Figure~\ref{fig-graph-grid} contains two connected components:
$$
\M(\{1,5\}\times\{2,4,6\})=
\left(
\begin{footnotesize}
\begin{array}{cc}
\Av(123)&\Av(12)\\
\Av(321)&\\
\Av(123)&\Av(12)
\end{array}
\end{footnotesize}
\right)
$$
and
$$
\M(\{2,3,4\}\times\{1,3,5\})=
\left(
\begin{footnotesize}
\begin{array}{ccc}
\Av(231)&\Av(12)&\\
\Av(21)&&\Av(132)\\
&&\Av(321)
\end{array}
\end{footnotesize}
\right).
$$

\begin{proposition}\label{grid-pwo-component}
If the grid classes of each of its connected components are pwo then $\Grid(\M)$ is pwo.
\end{proposition}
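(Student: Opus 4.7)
The plan is to decompose $\M$-gridded permutations by their projections to components and invoke a product pwo. Let $C_1,\ldots,C_r$ be the connected components of $G_\M$. For any $\M$-gridded permutation $\tilde\pi$, restricting to the cells of $C_i$ produces an $\M^{C_i}$-gridded permutation $\tilde\pi^{(i)}$, and the first step is to verify that the resulting projection $\tilde\pi\mapsto(\tilde\pi^{(1)},\ldots,\tilde\pi^{(r)})$ reflects the gridded containment order $\le_g$, in the sense that $\tilde\sigma\le_g\tilde\pi$ iff $\tilde\sigma^{(i)}\le_g\tilde\pi^{(i)}$ for every $i$. The forward implication is simply restriction of a gridded embedding. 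For the reverse, combine the per-component embeddings into a single map $\Phi$; each component embedding preserves cells, so $\Phi$ does too. Because any two nonempty cells sharing a row or column of $\M$ lie in the same component of $G_\M$, distinct components partition the nonempty rows and columns of $\M$ into disjoint sets; the positions and values of entries from different components in any $\M$-gridded permutation are therefore separated purely by which component's rows and columns they inhabit, so $\Phi$ automatically respects position and value order across components.

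With this reflection in hand, pwo of $\M$-gridded permutations under $\le_g$ follows from Proposition~\ref{product-pwo}, provided each set of $\M^{C_i}$-gridded permutations is itself pwo under $\le_g$. Pwo under $\le_g$ of $\M$-gridded permutations in turn yields pwo of $\Grid(\M)$ under the usual order $\le$: an infinite sequence in $\Grid(\M)$ lifts, by fixing some $\M$-gridding of each term, to an infinite sequence of $\M$-gridded permutations, and a $\le_g$-ascending pair of the lift specializes to a $\le$-ascending pair of the underlying permutations.

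The main obstacle is the one remaining input: upgrading pwo of the ungridded class $\Grid(\M^{C_i})$ (which is the hypothesis) to pwo of the $\M^{C_i}$-gridded permutations under $\le_g$. Concretely, an $\M^{C_i}$-gridded permutation is a permutation in $\Grid(\M^{C_i})$ with its entries labeled by the finite alphabet of cells, and $\le_g$ is label-preserving containment. Closure of pwo under finite labelings is a standard fact in the theory of well-quasi-orders, provable either by a minimal bad sequence argument or by a suitable application of Higman's Lemma; I would invoke it here to close the argument.
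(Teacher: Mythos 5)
Your proof takes the same route as the paper's: decompose $\M$-gridded permutations along the connected components of $G_\M$, establish the canonical order-preserving correspondence between the poset of $\M$-gridded permutations under $\le_g$ and the product of posets of $\M^{C_i}$-gridded permutations, and invoke Proposition~\ref{product-pwo}. The paper simply asserts this correspondence as ``canonical''; you supply the verification that it reflects $\le_g$, and your reasoning is sound --- since adjacent cells of $G_\M$ share a row or column and rows/columns with nonempty cells are thereby captured entirely by one component, the nonempty rows and columns partition according to component, so the cross-component relative order of any entries is pinned down by their cell labels alone.

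Where you depart from the paper is in flagging the passage from the hypothesis (each $\Grid(\M^{C_i})$ is pwo under $\le$) to the input that Proposition~\ref{product-pwo} actually requires (each poset of $\M^{C_i}$-gridded permutations is pwo under $\le_g$). You are right that this is a genuine step: $\le_g$ refines $\le$, so pwo of the ungridded class does not formally transfer to pwo of the gridded poset. The paper's proof also elides this step. However, I'd push back on your proposed way to close it. ``Pwo is closed under finite labelings'' is \emph{not} a general theorem: it holds for sequences (Higman) and trees (Kruskal), but for relational structures like permutations the relevant closure property is tied to the strictly stronger notion of better-quasi-order, and wqo alone does not guarantee it. A gridded permutation is not merely an arbitrary labeling of a permutation in $\Grid(\M^{C_i})$ --- the labels are constrained to be a gridding --- and it is this constraint, not a generic labeling lemma, that one would need to exploit. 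In the paper's actual use of the proposition (Proposition~\ref{sub-kappa-pwo}), the components are either single cells, where a gridding is unique and $\le_g$ coincides with $\le$, or pairs of monotone cells, where the gridded permutations encode as words over a two-letter alphabet and $\le_g$ is subword containment, so Higman applies directly. Your argument, and the paper's, would be cleaner with this case analysis made explicit rather than an appeal to a labeling fact that does not hold at the claimed level of generality.
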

\begin{proof}
Suppose that $\M$ has $s$ connected components, described by the columns $X_1,\dots,X_s$ and rows $Y_1,\dots,Y_s$.  Then there is a canonical order preserving bijection between the poset of all $\M$-gridded permutations, ordered by $\le_g$, and the poset of tuples $(\pi^1,\dots,\pi^s)$ where each $\pi^i$ is an $\M(X_i\times Y_i)$-gridded permutation, ordered by the product order $\le_g\times\cdots\times\le_g$.  The proof is then completed by Proposition~\ref{product-pwo}.
\end{proof}

We need a bit more notation for the last proposition of the section.  For an $\M$-gridded permutation $\pi$ and a subset $A=\{(j_1,k_1),\dots,(j_s,k_s)\}$ of cells of $\M$, we say that the (gridded) permutation formed by the entries of $\pi$ lying in the cells of $A$ is the {\it restriction of the gridded permutation $\pi$ to $A$\/}.  (Note that different griddings of $\pi$ will tend to lead to different restrictions.)  Furthermore, if $\C$ is a class, we say that {\it the restriction of $\C$ to $A$\/} is the set of all restrictions of its members to $A$.  Note that the restriction of a permutation class to a set of cells gives a set of gridded permutations which is closed downward under $\le_g$.

\begin{proposition}\label{grid-gr-component}
Suppose that $\C$ is $\M$-griddable.  The upper growth rate of $\C$ is the maximum of the upper growth rates of its restrictions to connected components of $\M$.
\end{proposition}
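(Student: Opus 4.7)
The plan is to translate everything to counts of gridded permutations via Proposition~\ref{gridded-gr} and then reuse the decomposition underlying Proposition~\ref{grid-pwo-component}. Let $g_n$ count the $\M$-gridded permutations in $\C$ of length $n$, and let $g_n^i$ count the $\M^{A_i}$-gridded permutations lying in $\C$'s restriction $\C^i$ to the $i$th connected component; by Proposition~\ref{gridded-gr} applied both to $\C$ and to each $\C^i$, it suffices to prove $\ugr(g_n)=\max_i\ugr(g_n^i)$. The structural observation we need is that the nonempty cells in any given row of $\M$ are pairwise connected in $G_\M$ (walk along the row through consecutive nonempty cells), so they all lie in a single connected component; the same holds by symmetry for columns. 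Hence distinct components of $\M$ occupy pairwise disjoint sets of rows and pairwise disjoint sets of columns.

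For the upper bound, the canonical restriction map from Proposition~\ref{grid-pwo-component} sends an $\M$-gridded permutation $\pi\in\C$ to the tuple $(\pi^1,\dots,\pi^s)$ of its restrictions, and the row/column disjointness makes this map injective: the widths of the columns of each $\pi^i$ appear, in the prescribed column order of $\M$, as the column widths of $\pi$ (and symmetrically for rows), so the tuple recovers both the gridding and the underlying permutation. Since $\pi^i\in\C^i$ by definition, the injection yields
$$
g_n\ \le\ \sum_{n_1+\cdots+n_s=n}\ \prod_{i=1}^s g_{n_i}^i.
$$
Set $\beta=\max_i\ugr(g_n^i)$ and fix any $\epsilon>0$. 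By the definition of $\limsup$, a constant $M=M(\epsilon)$ can be chosen so that $g_m^i\le M(\beta+\epsilon)^m$ for every $m$ and every $i$; since the number of compositions of $n$ into $s$ nonnegative parts is $\binom{n+s-1}{s-1}$, the displayed inequality gives $g_n\le\binom{n+s-1}{s-1}M^s(\beta+\epsilon)^n$, whence $\ugr(g_n)\le\beta+\epsilon$. Letting $\epsilon\to 0$ yields $\ugr(g_n)\le\beta$.

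For the matching lower bound I would show $g_n\ge g_n^i$ for each $i$. Given $\rho\in\C^i$ of length $n$, the underlying permutation of $\rho$ is a subpermutation of some element of $\C$ by the very definition of $\C^i$, and hence lies in $\C$ by downward closure; I equip it with an $\M$-gridding by retaining $\rho$'s $\M^{A_i}$-column and row divisions for the columns and rows in $A_i$ and inserting trivial zero-width divisions everywhere else, which leaves every cell outside $A_i$ empty and therefore in its prescribed class. Distinct $\M^{A_i}$-gridded versions of $\rho$ produce distinct $\M$-griddings, so $g_n\ge g_n^i$; taking the maximum over $i$ gives $\ugr(g_n)\ge\beta$, and combining with the upper bound completes the argument. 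The one genuinely delicate step is the injectivity of the restriction map in the upper bound, for which the disjointness of the row and column supports of distinct components is essential; everything else is routine growth-rate bookkeeping.
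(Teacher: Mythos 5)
Your proposal is correct and follows essentially the same approach as the paper: reduce to counting $\M$-gridded permutations via Proposition~\ref{gridded-gr}, bound $g_n$ by the convolution $\sum_{n_1+\cdots+n_s=n}\prod_i g^i_{n_i}$ over connected components, and pass to growth rates with an $\epsilon$-perturbation. Your write-up is actually a touch more complete than the paper's --- you spell out why the restriction map is injective (disjointness of row/column supports) and you give the easy lower bound $g_n\ge g_n^i$ explicitly, both of which the paper leaves tacit --- but these are expository refinements rather than a different route.
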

\begin{proof}
Let $g_n$ denote the number of $\M$-gridded permutations of length $n$ in $\C$, so the upper growth rate of $\C$ is equal to the upper growth rate of $g_n$ by Proposition~\ref{gridded-gr}.  Suppose that $\M$ has $s$ connected components, described by the columns $X_1,\dots,X_s$ and rows $Y_1,\dots,Y_s$, denote the restriction of $\C$ to $\M(X_i\times Y_i)$ by $\C^i$, and suppose that the greatest upper growth rate of any of these restrictions is $\gamma$.  It suffices to establish that the upper growth rate of $g_n$ is at most $\gamma$.

As remarked in the proof of Proposition~\ref{grid-pwo-component}, there is a canonical bijection between $\M$-gridded permutations and the poset of tuples $(\pi^1,\dots,\pi^s)$ where each $\pi^i$ is an $\M(X_i\times Y_i)$-gridded permutation.  Letting $g_{i,n}$ denote the number of $\M(X_i\times Y_i)$-gridded permutations of length $n$ in $\C^i$ we have
$$
g_n\le\sum_{n_1+\cdots+n_s=n} \prod_i g_{i,n_i}.
$$
Now fix $\epsilon>0$.  By our choice of $\gamma$, there is some $N$ such that for all $i\in[s]$ and $n_i>N$, $g_{i,n_i}<((1+\epsilon)\gamma)^{n_i}$.  There is also (trivially) an integer $N'\ge N$ such that for all $i\in[s]$, $n>N'$, and $n_i\le N$, $g_{i,n_i}\le (1+\epsilon)^n$.  Thus we have that for all $n>N'$ and $n_i\le n$,
$$
g_{i,n_i}
\le
\left\{\begin{array}{ll}
(1+\epsilon)^n\gamma^{n_i}&\mbox{if $n_i>N$ while}\\
(1+\epsilon)^n&\mbox{if $n_i\le N$.}
\end{array}
\right.
$$
It follows that for these values of $n$,
\begin{eqnarray*}
g_n
&\le&
{n+s-1\choose s-1}\max_{n_1+\cdots+n_s=n}\prod_i g_{i,n_i},\\
&\le&
{n+s-1\choose s-1}\max_{n_1+\cdots+n_s=n} \left(1+\epsilon\right)^{ns}\gamma^n,
\end{eqnarray*}
implying that $\displaystyle\limsup_{n\rightarrow\infty}\sqrt[n]{g_n}\le (1+\epsilon)^s\gamma$.  Letting $\epsilon\rightarrow 0$ completes the proof.
\end{proof}

To conclude this section we note that (upper, lower, proper) growth rates of specific grid classes can be computed using the method of Lagrange multipliers; for some details see Subsection~\ref{subsec-triple-alternations} of the Appendix.  Also, combining two recurring themes of this paper, Waton characterizes the atomic monotone grid classes in his thesis~\cite{waton:on-permutation-:}.  His characterization depends not only on row/column graphs, but also on the ``parity'' of the cycles of this graph.

\section{Characterizing $\D$-Griddable and Grid Irreducible Classes}\label{sec-gridding-characterization}


We begin this section with a characterization:

\begin{theorem}\label{gridding-characterization}
The permutation class $\C$ is $\D$-griddable if and only if it does not contain arbitrarily long sums or skew sums of basis elements of $\D$, that is, if there exists a constant $m$ so that $\C$ does not contain $\beta_1\oplus\cdots\oplus\beta_m$ or $\beta_1\ominus\cdots\ominus\beta_m$ for any basis elements $\beta_1,\dots,\beta_m$ of $\D$.

N.b. If $\D$ is finitely based then this condition can be simplified: $\C$ fails to have a $\D$-gridding if and only if $\C$ contains $\bigoplus\beta$ or $\bigominus\beta$ for some basis element $\beta$ of $\D$.
\end{theorem}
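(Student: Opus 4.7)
The plan is to prove the two implications separately, with the forward direction being direct and the reverse (via contrapositive) requiring more work.

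For the forward direction, suppose $\C\subseteq\Grid(\M)$ for some $t\times u$ matrix $\M$ of subclasses of $\D$, and let $\pi=\beta_1\oplus\cdots\oplus\beta_m\in\C$ be a direct sum of basis elements of $\D$. I would fix any $\M$-gridding of $\pi$ and argue as follows: each $\beta_i\notin\D$, hence $\beta_i$ cannot lie within a single cell of $\M$, so at least one interior grid line must pass through its entries. By the definition of direct sum the $\beta_i$ lie in pairwise disjoint position-intervals and pairwise disjoint value-intervals, so every interior vertical or horizontal grid line cuts at most one $\beta_i$. Since there are $t-1$ interior vertical and $u-1$ interior horizontal lines, this forces $m\le t+u-2$, giving a uniform bound. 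The skew-sum case is symmetric.

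For the reverse direction I would argue the contrapositive: assuming $\C$ is not $\D$-griddable, produce arbitrarily long direct sums or skew sums of basis elements in $\C$. By Proposition~\ref{grid-coverings}, for each $s$ there exists $\pi_s\in\C$ whose minimal bad rectangles (bounding boxes of basis-element occurrences) cannot be sliced by $s$ horizontal and vertical lines. The plan is to extract from $\pi_s$ a large sub-collection of basis-element occurrences whose bounding boxes are pairwise disjoint in both position and value; such a sub-collection, regarded as a permutation of its value-coordinates ordered by position, gives by Erd\H os--Szekeres a long monotone subsequence, which by double-disjointness corresponds to a direct sum (for increasing) or skew sum (for decreasing) of basis elements inside $\pi_s$.

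The extraction uses the one-dimensional matching/covering duality for intervals on a line: the minimum number of points hitting a family of intervals equals the maximum number of pairwise-disjoint intervals in the family. Applied to the $x$-projections of the minimal bad rectangles, this gives at least $s$ basis-element occurrences with pairwise position-disjoint bounding boxes (since any slicing using only vertical lines must use at least $s$). The main obstacle is then the second step: passing from a family disjoint in one coordinate to one simultaneously disjoint in both. In general rectangle configurations $\tau$ being large does not force a large doubly-disjoint subfamily (small examples show the covering number can exceed the matching number by a multiplicative factor), so this step must exploit permutation geometry --- for instance by choosing a greedy maximal position-disjoint family together with a pigeonhole on how the remaining bad rectangles intersect them, iterated until either the target number of doubly-disjoint rectangles is reached or the whole slicing can be completed with few lines (a contradiction). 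A quantitatively careful version should yield $\Omega(\sqrt{s})$ doubly-disjoint occurrences, whence Erd\H os--Szekeres provides $\Omega(s^{1/4})$ basis elements in a direct or skew sum.

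Finally, the N.B.\ follows by pigeonhole: if $\D$ has finite basis $\{\beta_1,\dots,\beta_r\}$ and $\C$ contains $\beta_{i_1}\oplus\cdots\oplus\beta_{i_m}$ for arbitrarily large $m$, some $\beta_j$ appears at least $\lceil m/r\rceil$ times in such a sum, so $\beta_j^{\oplus\lceil m/r\rceil}\in\C$; a further pigeonhole on $j$ gives a single $\beta$ with $\beta^{\oplus k}\in\C$ for all $k$, i.e., $\bigoplus\beta\subseteq\C$ (and symmetrically for the skew case).
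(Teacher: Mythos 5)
Your forward direction and the overall skeleton of the reverse direction match the paper's. The paper also observes that the easy direction reduces to the fact that $\beta_1\oplus\cdots\oplus\beta_m$ needs at least $m+1$ $\D$-rectangles to be covered, and it also reduces the hard direction to a statement about axes-parallel rectangles: if a family of rectangles (here, the bad rectangles $\{R:\pi(R)\notin\D\}$) has bounded independence number, then it can be sliced by a bounded number of horizontal and vertical lines, whence Proposition~\ref{grid-coverings} finishes. Your application of Erd\H{o}s--Szekeres to pass from an independent family to an increasing or decreasing one, and thence to a direct or skew sum of basis elements, is the same.

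The difficulty is that you have not actually proved the rectangle-slicing statement, which is the entire substance of this direction. You correctly identify it as ``the main obstacle,'' but what you offer in its place --- interval-hitting duality on the $x$-projections to get a large position-disjoint family, then ``a greedy maximal position-disjoint family together with a pigeonhole \dots iterated'' --- is not an argument. Interval duality in one coordinate alone is useless: the position-disjoint family you extract might consist entirely of tall rectangles with a common $y$-interval, hence be sliceable by one horizontal line and contain no independent pair at all, so nothing forces the iteration you sketch to make progress or terminate with the desired output. This is precisely the gap that the paper's Lemma~\ref{rectangles-lemma} fills, via a careful two-dimensional induction: fix a pair $R_1,R_2$ with disjoint $y$-projections minimizing $\max(\height(R_1),\height(R_2))$; use Helly in the $y$-coordinate to slice everything shorter than $R_2$ with one line; partition the plane into six regions around $R_1,R_2$, recurse on the four ``corner'' regions (where the independence number has dropped by one), and handle the two ``strip'' regions with Helly again. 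This yields the recurrence $f(m)=4f(m-1)+10$. Incidentally, your claimed $\Omega(\sqrt{s})$ doubly-disjoint occurrences is unsubstantiated and almost certainly too strong: the paper's bound is exponential in $m$, i.e., it only certifies an independent set of size $\Omega(\log s)$ from a covering number of $s$ (the paper even remarks in the concluding section that whether a linear bound holds is open). Fortunately any unbounded function of $s$ suffices for the theorem, but you still need to prove that one exists, and as written you have not.
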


One direction of Theorem~\ref{gridding-characterization} is clear: if $\beta_1,\dots,\beta_m$ are basis elements of $\D$ then their direct sum $\beta_1\oplus\cdots\oplus\beta_m$ can be covered by no fewer than $m+1$ $\D$-rectangles, so if $\C$ contains arbitrarily long direct sums (equivalently, skew sums) of basis elements of $\D$ then it is not $\D$-griddable.

By the Proposition~\ref{grid-coverings} (2) interpretation of griddability, the other direction of Theorem~\ref{gridding-characterization} involves slicing a collection of rectangles in the plane --- in particular, the set $\{\mbox{axis-parallel rectangles $R$} : \pi(R)\not\in\D\}$ --- with a bounded number of vertical and horizontal lines.  Since we use these notions again in the next section, we cast this discussion in slightly more general terms.

We say that two rectangles $R,S$ are {\it independent\/} if both their $x$- and $y$-axis projections are disjoint, and a set of rectangles is said to be independent if they are pairwise independent.  An {\it increasing set\/} of rectangles is an independent set of rectangles $\{R_1,\dots,R_m\}$ such that $R_2$ lies above and to the right of $R_1$, $R_3$ lies above and to the right of $R_2$, and so on.  {\it Decreasing sets\/} of rectangles are defined analogously.  Note that independent sets of rectangles can (essentially) be viewed as permutations; thus they fall under the purview of the Erd\H{o}s-Szekeres Theorem, so every independent set of $(m-1)^2+1$ rectangles contains either an increasing or a decreasing subset of $m$ rectangles.

Returning to the context of Theorem~\ref{gridding-characterization}, if the class $\C$ satisfies the hypotheses of that theorem then for any permutation $\pi\in\C$ the set $\R=\{\mbox{axis-parallel rectangles $R$} : \pi(R)\not\in\D\}$ does not contain an increasing or a decreasing set of $m$ rectangles (if it did, then each such rectangle would contain a basis element of $\D$, and thus $\pi$, and therefore $\C$, would contain $\beta_1\oplus\cdots\oplus\beta_m$ or $\beta_1\ominus\cdots\ominus\beta_m$ for some basis elements $\beta_1,\dots,\beta_m$ of $\D$).  The proof of the theorem is therefore completed with the following lemma.  We give a short proof of an exponential bound for this bound, a result which can also be derived from the work of Gy{\'a}rf{\'a}s and Lehel~\cite{gyarfas:a-helly-type-pr:}.  K{\'a}rolyi and Tardos~\cite{karolyi:on-point-covers:} take a different approach, which gives a polynomial bound.

\begin{lemma}\label{rectangles-lemma}
There is a function $f(m)$ such that for any collection $\R$ of axis-parallel rectangles in the plane which has no independent set of size $m$ or greater, there exists a set of $f(m)$ horizontal and vertical lines that slice every rectangle in $\R$.
\end{lemma}
\begin{proof}
Our proof is by induction on $m$; note that the base case $m=0$ is trivial.  We denote by $\proj_x R$ and $\proj_y R$ the projections of $R$ onto the $x$- and $y$-axes, respectively.  From these projections we define two structures on the set $\R$, a quasi-order $\subseteq_x$ and a graph $\sim_y$:
\begin{center}
\begin{tabular}{lcl}
$R\subseteq_x S$&if&$\proj_x R\subseteq\proj_x S$,\\
$R\sim_y S$&if&$\proj_y R\cap \proj_y S\neq\emptyset$.
\end{tabular}
\end{center}
Figure~\ref{fig-gridding-characterization} shows an example of these two structures.

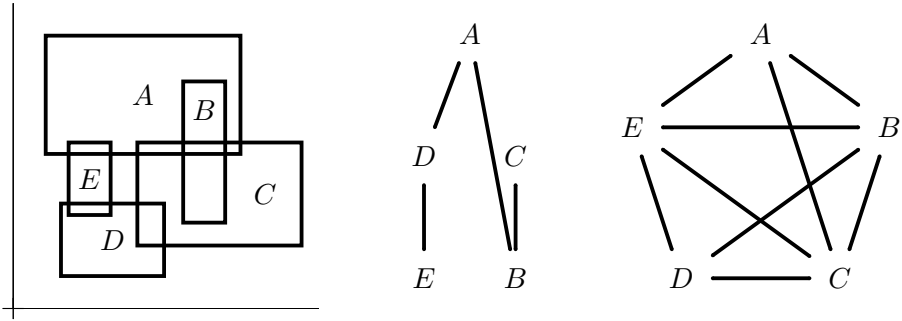
\begin{figure}
\begin{center}
\begin{tabular}{ccccc}
\psset{xunit=0.008in, yunit=0.008in}
\psset{linewidth=0.005in}
\begin{pspicture}(0,0)(200,200)
\psaxes[dy=1000,dx=1000](0,0)(200,200)
\psframe[linewidth=0.02in](30,20)(100,70)
\rput[c](65,45){$D$}
\psframe[linewidth=0.02in](20,100)(150,180)
\rput[c](85,140){$A$}
\psframe[linewidth=0.02in](35,60)(65,110)
\rput[c](50,85){$E$}
\psframe[linewidth=0.02in](80,40)(190,110)
\rput[c](165,75){$C$}
\psframe[linewidth=0.02in](110,55)(140,150)
\rput[c](125,130){$B$}
\end{pspicture}
&
\rule{10pt}{0pt}
&
\psset{xunit=0.008in, yunit=0.008in}
\psset{nodesep=0.1in}
\psset{linewidth=0.02in}
\begin{pspicture}(0,0)(80,200)
\rput(10,20){\rnode{E}{$E$}}
\rput(70,20){\rnode{B}{$B$}}
\rput(10,100){\rnode{D}{$D$}}
\rput(70,100){\rnode{C}{$C$}}
\rput(40,180){\rnode{A}{$A$}}
\ncline{c-c}{E}{D}
\ncline{c-c}{D}{A}
\ncline{c-c}{B}{A}
\ncline{c-c}{B}{C}
\end{pspicture}
&
\rule{10pt}{0pt}
&
\psset{xunit=0.008in, yunit=0.008in}
\psset{nodesep=0.1in}
\psset{linewidth=0.02in}
\begin{pspicture}(0,0)(200,200)
\rput(91.55417530, 180){\rnode{A}{$A$}}
\rput(175.6711533, 118.8854382){\rnode{B}{$B$}}
\rput(143.5413267, 20){\rnode{C}{$C$}}
\rput(39.56702390, 20){\rnode{D}{$D$}}
\rput(7.43719734, 118.8854382){\rnode{E}{$E$}}
\ncline{c-c}{A}{B}
\ncline{c-c}{A}{C}
\ncline{c-c}{A}{E}
\ncline{c-c}{B}{C}
\ncline{c-c}{B}{D}
\ncline{c-c}{B}{E}
\ncline{c-c}{C}{D}
\ncline{c-c}{C}{E}
\ncline{c-c}{D}{E}
\end{pspicture}
\end{tabular}
\end{center}
\caption{A collection of axis-parallel rectangles (left) with their $\subseteq_x$ quasi-order (center, which in this case happens to be a partial order) and $\sim_y$ graph (right).}\label{fig-gridding-characterization}.
\end{figure}

Consider first a clique, say $\K\subseteq\R$, in the $\sim_y$ graph.  Every pair of rectangles in this clique have intersecting $y$-projections, so $\bigcap_{R\in\K}\proj_y R\neq\emptyset$ (this is the one-dimensional version of Helly's Theorem).  Hence every clique in the $\sim_y$ graph can be sliced by a single horizontal line; in particular, we are done in the case where $\sim_y$ is complete.

\begin{figure}
\begin{center}
\begin{tabular}{ccccc}
\psset{xunit=0.004in, yunit=0.004in}
\psset{linewidth=0.005in}
\begin{pspicture}(0,0)(200,200)
\psframe[linecolor=white,fillstyle=solid,fillcolor=lightgray](0,0)(20,100)
\psline[linecolor=darkgray,linestyle=solid,linewidth=0.02in]{c-c}(0,100)(20,100)
\psline[linecolor=darkgray,linestyle=solid,linewidth=0.02in]{c-c}(20,0)(20,100)
\psaxes[dy=1000,dx=1000](0,0)(200,200)
\psline[linewidth=0.02in]{c-c}(30,20)(30,70)
\psline[linewidth=0.02in]{c-c}(30,70)(100,70)
\psline[linewidth=0.02in]{c-c}(100,70)(100,20)
\psline[linewidth=0.02in]{c-c}(100,20)(30,20)
\rput[c](65,45){$R_1$}
\psline[linewidth=0.02in]{c-c}(20,100)(20,180)
\psline[linewidth=0.02in]{c-c}(20,180)(150,180)
\psline[linewidth=0.02in]{c-c}(150,180)(150,100)
\psline[linewidth=0.02in]{c-c}(150,100)(20,100)
\rput[c](85,140){$R_2$}
\end{pspicture}
&
\rule{10pt}{0pt}
&
\psset{xunit=0.004in, yunit=0.004in}
\psset{linewidth=0.005in}
\begin{pspicture}(0,0)(200,200)
\psframe[linecolor=white,fillstyle=solid,fillcolor=lightgray](20,0)(150,100)
\psline[linecolor=darkgray,linestyle=solid,linewidth=0.02in]{c-c}(20,0)(20,100)
\psline[linecolor=darkgray,linestyle=solid,linewidth=0.02in]{c-c}(150,0)(150,100)
\psaxes[dy=1000,dx=1000](0,0)(200,200)
\psline[linewidth=0.02in]{c-c}(30,20)(30,70)
\psline[linewidth=0.02in]{c-c}(30,70)(100,70)
\psline[linewidth=0.02in]{c-c}(100,70)(100,20)
\psline[linewidth=0.02in]{c-c}(100,20)(30,20)
\rput[c](65,45){$R_1$}
\psline[linewidth=0.02in]{c-c}(20,100)(20,180)
\psline[linewidth=0.02in]{c-c}(20,180)(150,180)
\psline[linewidth=0.02in]{c-c}(150,180)(150,100)
\psline[linewidth=0.02in]{c-c}(150,100)(20,100)
\rput[c](85,140){$R_2$}
\end{pspicture}
&
\rule{10pt}{0pt}
&
\psset{xunit=0.004in, yunit=0.004in}
\psset{linewidth=0.005in}
\begin{pspicture}(0,0)(200,200)
\psframe[linecolor=white,fillstyle=solid,fillcolor=lightgray](150,100)(200,0)
\psline[linecolor=darkgray,linestyle=solid,linewidth=0.02in]{c-c}(150,100)(200,100)
\psline[linecolor=darkgray,linestyle=solid,linewidth=0.02in]{c-c}(150,0)(150,100)
\psaxes[dy=1000,dx=1000](0,0)(200,200)
\psline[linewidth=0.02in]{c-c}(30,20)(30,70)
\psline[linewidth=0.02in]{c-c}(30,70)(100,70)
\psline[linewidth=0.02in]{c-c}(100,70)(100,20)
\psline[linewidth=0.02in]{c-c}(100,20)(30,20)
\rput[c](65,45){$R_1$}
\psline[linewidth=0.02in]{c-c}(20,100)(20,180)
\psline[linewidth=0.02in]{c-c}(20,180)(150,180)
\psline[linewidth=0.02in]{c-c}(150,180)(150,100)
\psline[linewidth=0.02in]{c-c}(150,100)(20,100)
\rput[c](85,140){$R_2$}
\end{pspicture}
\\
(a)&&(b)&&(c)
\\[20pt]
\psset{xunit=0.004in, yunit=0.004in}
\psset{linewidth=0.005in}
\begin{pspicture}(0,0)(200,200)
\psframe[linecolor=white,fillstyle=solid,fillcolor=lightgray](30,70)(0,200)
\psline[linecolor=darkgray,linestyle=solid,linewidth=0.02in]{c-c}(30,70)(0,70)
\psline[linecolor=darkgray,linestyle=solid,linewidth=0.02in]{c-c}(30,70)(30,200)
\psaxes[dy=1000,dx=1000](0,0)(200,200)
\psline[linewidth=0.02in]{c-c}(30,20)(30,70)
\psline[linewidth=0.02in]{c-c}(30,70)(100,70)
\psline[linewidth=0.02in]{c-c}(100,70)(100,20)
\psline[linewidth=0.02in]{c-c}(100,20)(30,20)
\rput[c](65,45){$R_1$}
\psline[linewidth=0.02in]{c-c}(20,100)(20,180)
\psline[linewidth=0.02in]{c-c}(20,180)(150,180)
\psline[linewidth=0.02in]{c-c}(150,180)(150,100)
\psline[linewidth=0.02in]{c-c}(150,100)(20,100)
\rput[c](85,140){$R_2$}
\end{pspicture}
&
\rule{10pt}{0pt}
&
\psset{xunit=0.004in, yunit=0.004in}
\psset{linewidth=0.005in}
\begin{pspicture}(0,0)(200,200)
\psframe[linecolor=white,fillstyle=solid,fillcolor=lightgray](30,70)(100,200)
\psline[linecolor=darkgray,linestyle=solid,linewidth=0.02in]{c-c}(30,70)(30,200)
\psline[linecolor=darkgray,linestyle=solid,linewidth=0.02in]{c-c}(100,70)(100,200)
\psline[linecolor=darkgray,linestyle=solid,linewidth=0.02in]{c-c}(30,70)(100,70)
\psaxes[dy=1000,dx=1000](0,0)(200,200)
\psline[linewidth=0.02in]{c-c}(30,20)(30,70)
\psline[linewidth=0.02in]{c-c}(30,70)(100,70)
\psline[linewidth=0.02in]{c-c}(100,70)(100,20)
\psline[linewidth=0.02in]{c-c}(100,20)(30,20)
\rput[c](65,45){$R_1$}
\psline[linewidth=0.02in]{c-c}(20,100)(20,180)
\psline[linewidth=0.02in]{c-c}(20,180)(150,180)
\psline[linewidth=0.02in]{c-c}(150,180)(150,100)
\psline[linewidth=0.02in]{c-c}(150,100)(20,100)
\rput[c](85,140){$R_2$}
\end{pspicture}
&
\rule{10pt}{0pt}
&
\psset{xunit=0.004in, yunit=0.004in}
\psset{linewidth=0.005in}
\begin{pspicture}(0,0)(200,200)
\psframe[linecolor=white,fillstyle=solid,fillcolor=lightgray](100,70)(200,200)
\psline[linecolor=darkgray,linestyle=solid,linewidth=0.02in]{c-c}(100,70)(200,70)
\psline[linecolor=darkgray,linestyle=solid,linewidth=0.02in]{c-c}(100,70)(100,200)
\psaxes[dy=1000,dx=1000](0,0)(200,200)
\psline[linewidth=0.02in]{c-c}(30,20)(30,70)
\psline[linewidth=0.02in]{c-c}(30,70)(100,70)
\psline[linewidth=0.02in]{c-c}(100,70)(100,20)
\psline[linewidth=0.02in]{c-c}(100,20)(30,20)
\rput[c](65,45){$R_1$}
\psline[linewidth=0.02in]{c-c}(20,100)(20,180)
\psline[linewidth=0.02in]{c-c}(20,180)(150,180)
\psline[linewidth=0.02in]{c-c}(150,180)(150,100)
\psline[linewidth=0.02in]{c-c}(150,100)(20,100)
\rput[c](85,140){$R_2$}
\end{pspicture}
\\
(d)&&(e)&&(f)
\end{tabular}
\end{center}
\caption{The six regions from the proof of Lemma~\ref{rectangles-lemma}.}\label{fig-rectangles-lemma}
\end{figure}
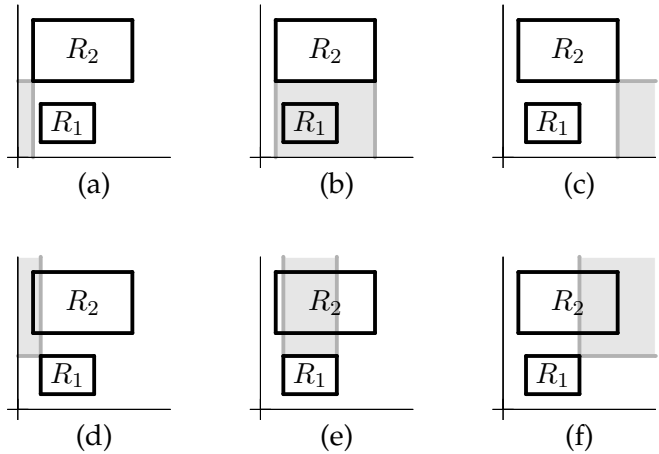

We therefore assume that $\sim_y$ is not complete and define the {\it height\/} of the rectangle $R$ in the $\subseteq_x$ quasi-order, which we denote by $\height_x R$, as one less than the greatest number of distinct elements in a nondecreasing (with respect to $\subseteq_x$) chain which ends at $R$.  For example, in Figure~\ref{fig-rectangles-lemma}, $\height_x E=\height_x B=0$, $\height_x D=\height_x C=1$, and $\height_x A=2$.  Choose $R_1\not\sim_y R_2$ from $\R$ to minimize $\max(\height_x(R_1),\height_x(R_2))$, and without loss of generality suppose that $\height_x(R_1)\le\height_x(R_2)$ and that $R_1$ lies below $R_2$.  Therefore $\{R\in\R : \height_x(R)<\height_x(R_2)\}$ forms a clique in the $\sim_y$ graph, and thus by the above, these rectangles can be sliced with a single horizontal line.

Now consider the regions shown in Figure~\ref{fig-rectangles-lemma}.  By induction and our choice of $R_1$ and $R_2$, we have the following:
\begin{enumerate}
\item[(a)] The rectangles completely contained in this region, which consists of all points strictly below and to the left of $R_2$, cannot contain an independent set of $m-1$ or more rectangles, as otherwise $R_2$ together with this independent set would form an independent set of size $m$.  Thus this region can be sliced by $f(m-1)$ horizontal and vertical lines by induction.
\item[(b)] This region consists of all points strictly below $R_2$ whose $x$-coordinate lies strictly between the left and right edges of $R_2$.  Therefore if the rectangle $R$ lies in this region then $\height_x(R)<\height_x(R_2)$.  Thus by our choice of $R_2$ these rectangles form a clique in the $\sim_y$ graph and so can be sliced by a single horizontal line.
\item[(c)] As with (a), these rectangles can be sliced by $f(m-1)$ horizontal and vertical lines.
\item[(d)] As with (a), these rectangles can be sliced by $f(m-1)$ horizontal and vertical lines.
\item[(e)] As with (b), if the rectangle $R$ is completely contained in this region, then $\height_x(R)<\height_x(R_1)\le\height_x(R_2)$, and thus by our choice of $R_2$ these rectangles for a clique in the $\sim_y$ graph and so can be sliced by a single horizontal line.
\item[(f)] As with (a), these rectangles can be sliced by $f(m-1)$ horizontal and vertical lines.
\end{enumerate}

Thus we have found a collection of $4f(m-1)+2$ vertical and horizontal lines which slice every rectangle properly contained in one of the regions (a)--(f).  Furthermore, the $8$ lines which coincide with the sides of $R_1$ and $R_2$ slice every rectangle that is not properly contained in one of these regions, and so we may take $f(m)=4f(m-1)+10$, completing the proof.
\end{proof}

A special case of Theorem~\ref{gridding-characterization}, the characterization of monotone griddable classes, appeared in Huczynska and Vatter~\cite{huczynska:grid-classes-an:}.  Note that the condition given there --- a permutation class is monotone griddable if and only if it does not contain arbitrarily long sums of $21$ or skew sums of $12$ --- is merely a simplification of the conditions given by Theorem~\ref{gridding-characterization}%
\footnote{{\it Proof.}\quad Take $\D$ to contain the monotone permutations, i.e., $\D=\Av(12)\cup\Av(21)$, and note that a class is $\D$-griddable if and only if it is monotone griddable by Proposition~\ref{grid-union}.  The basis of $\D$ is $\{132,213,231,312\}$, so Theorem~\ref{gridding-characterization} implies that the permutation class $\C$ is $\D$-griddable if and only if it does not contain arbitrarily long sums or skew sums of any of these elements, a condition equivalent to not containing arbitrarily long sums of $21$ or skew sums of $12$.\qed}.

Moving beyond griddability, we say that the class $\D$ is {\it grid irreducible\/} if $\D$ is not $\E$-griddable for any proper subclass $\E\subsetneq\D$.  Theorem~\ref{gridding-characterization} gives us, almost immediately, a characterization of the grid irreducible classes.

\begin{proposition}\label{grid-irreducible}
The permutation class $\D$ is grid irreducible if and only if $\D=\{1\}$ or for every $\pi\in\D$ either $\bigoplus\pi$ or $\bigominus\pi$ is contained in $\D$.
\end{proposition}
\begin{proof}
If $\D$ is finite then it is clear that $\D$ is grid irreducible if and only if $\D=\{1\}$, so we suppose that $\D$ is infinite.  If there is some $\pi\in\D$ such that neither $\bigoplus\pi$ nor $\bigominus\pi$ is contained in $\D$ then Theorem~\ref{gridding-characterization} shows that $\D$ is $\D\cap\Av(\pi)$-griddable, and thus $\D$ is not grid irreducible.  If, on the other hand, $\D$ contains arbitrarily long direct sums or skew sums of each of its members then it is clear that $\D$ is not $\E$-griddable for any proper subclass $\E\subsetneq\D$.
\end{proof}

Note that Proposition~\ref{grid-irreducible} does not guarantee that every class $\C$ is $\D$-griddable for a grid irreducible subclass $\D\subseteq\C$, and indeed, this does not necessarily hold%
\footnote{For example, take $A=\{a_1,a_2,\dots\}$ to be an infinite antichain and let $\C$ denote the closure of the set of permutations of the form $a_{i_1}\oplus a_{i_2}\oplus\cdots$ where $1\le i_1<i_2<\cdots$.  Suppose to the contrary that $\C$ is $\D$-griddable for a grid irreducible subclass $\D\subseteq\C$, which then, by Proposition~\ref{grid-irreducible}, cannot contain any member of $A$.  This, however, implies that each $a_i$ contains a basis element of $\D$, and thus $\C$ contains arbitrarily long direct sums of basis elements of $\D$ and so is not $\D$-griddable by Theorem~\ref{gridding-characterization}.
}%
; however, it is true in an important special case.

\begin{proposition}\label{pwo-grid-irreducible}
If the permutation class $\C$ is pwo then $\C$ is $\D$-griddable for the grid irreducible subclass $\D=\{\pi\in\C: \mbox{either $\bigoplus\pi$ or $\bigominus\pi$ is contained in $\C$}\}$.
\end{proposition}
\begin{proof}
Choose $\pi_1\in\C$ such that $\C$ contains neither $\bigoplus\pi_1$ nor $\bigominus\pi_1$.  It follows from Theorem~\ref{gridding-characterization} that $\C$ is $\C\setminus\{\pi_1\}$-griddable.  If
$$
\C\setminus\{\pi_1\}=\D=\{\pi\in\C: \mbox{either $\bigoplus\pi$ or $\bigominus\pi$ is contained in $\C$}\}
$$
then we are done, so we suppose that there is some $\pi_2\in\C\setminus\{\pi_1\}$ such that neither $\bigoplus\pi_2$ nor $\bigominus\pi_2$ is contained in $\C$.  Again we have from Theorem~\ref{gridding-characterization} that $\C$ is $\C\setminus\{\pi_2\}$-griddable, and thus by Proposition~\ref{grid-intersection}, $\C$ is $\C\setminus\{\pi_1,\pi_2\}$-griddable.  We are done if $\C\setminus\{\pi_1,\pi_2\}=\D$, and in the other case we may choose $\pi_3\in\C\setminus\{\pi_1,\pi_2\}$ such that neither $\bigoplus\pi_3$ nor $\bigominus\pi_3$ is contained in $\C$.  Continuing in this manner we construct a descending chain
$$
\C
\supsetneq\C\setminus\{\pi_1\}
\supsetneq\C\setminus\{\pi_1,\pi_2\}
\supsetneq\C\setminus\{\pi_1,\pi_2,\pi_3\}
\supsetneq\cdots
\supseteq\D
$$
of classes such that for all $i$, $\C$ is $\C\setminus\{\pi_1,\dots,\pi_i\}$-griddable.  Because $\C$ is pwo, it satisfies the descending chain condition by Proposition~\ref{pwo-subclasses-dcc}, and so this chain must terminate, say at $\C\setminus\{\pi_1,\dots,\pi_m\}$.  This means that for every $\pi\in\C\setminus\{\pi_1,\dots,\pi_m\}$, $\C$ contains either $\bigoplus\pi$ or $\bigominus\pi$.  Therefore $\C\setminus\{\pi_1,\dots,\pi_m\}=\D$ and $\C$ is $\D$-griddable, as desired.
\end{proof}

We conclude this section with the following result; the inference important to us is that the combination of atomicity and grid irreducibility is quite strong, which is needed for the proof of Theorem~\ref{gr-atomic-grid-irreducible}.

\begin{proposition}\label{atomic-grid-irreducible}
The following conditions on a permutation class $\C$ are equivalent:
\begin{enumerate}
\item[(1)] $\C$ is both atomic and grid irreducible,
\item[(2)] for every $\pi,\sigma\in\C$, either $\pi\oplus\sigma\in\C$ or $\pi\ominus\sigma\in\C$, and
\item[(3)] $\C$ is either sum or skew sum complete.
\end{enumerate}
\end{proposition}
\begin{proof}
If a permutation class is either sum or skew sum complete then it satisfies the joint embedding property, so is atomic by Theorem~\ref{atomic-tfae}, and also satisfies the conditions of Proposition~\ref{grid-irreducible}, so is grid irreducible, verifying that (3) implies (1).

Suppose now that the class $\C$ is both atomic and grid irreducible.  For any two permutations $\pi,\sigma\in\C$, $\C$ contains a permutation $\tau\ge\pi,\sigma$ because it is atomic.  Then because $\C$ is grid irreducible, it contains either $\tau\oplus\tau$ or $\tau\ominus\tau$, and thus either $\pi\oplus\sigma\in\C$ or $\pi\ominus\sigma\in\C$, so (1) implies (2).

To show that (2) implies (3) and complete the proof, list the members of $\C$ as $\pi_1,\pi_2,\dots$ (which can be done because every permutation class is countable) and for each $k$ choose a permutation $\sigma_k\in\C$ which contains $\pi_1,\dots,\pi_k$ (that $\sigma_k$ exists is guaranteed by (2)).  Now set $\tau_1=\sigma_1$ and for $k\ge 1$, take $\tau_{k+1}\in\C$ to be either $\tau_k\oplus\sigma_{k+1}$ or $\tau_k\ominus\sigma_{k+1}$, depending on which permutation lies in $\C$.  Either infinitely many of the $\tau_k$s are formed by sums or infinitely many are formed by skew sums (or both); in the former case $\C$ is sum complete, in the latter, skew sum complete.
\end{proof}

\section{Griddings of Small Permutation Classes}\label{sec-subst-gridding}

Having established the basic properties of grid classes, we move on to what can be said about small permutation classes.  Given a small permutation class $\C$, we would like to find a ``nice'' class $\D$ for which $\C$ is $\D$-griddable.  For our purposes, ``nice'' means that $\D$ should have only finitely many simple permutations and bounded substitution depth, a concept defined within this section which is required in Section~\ref{sec-gridding-condition}.

Let $\O$ denote the closure of the set of oscillations (that is, $\O$ consists of all oscillations and every permutation contained in an oscillation) and $\O_k$ denote the closure of the set of oscillations of length at most $k$.  For the moment we simply consider increasing oscillations, although $\O$ also contains decreasing oscillations.  The set of increasing oscillations forms two chains under the containment order, and  each increasing oscillation of length $k\ge 4$ contains both increasing oscillations of length $k-1$ (see Figure~\ref{fig-inc-osc}).  It is easy to verify (e.g., using Proposition~\ref{sum-indecomp-connected}) that if $\pi$ is an increasing oscillation of length $k\ge 3$ then the nonempty sum indecomposable permutations (i.e., the increasing oscillations) contained in $\pi$ have the generating function
$$
x+x^2+2x^3+\cdots+2x^{k-1}+x^k
=
\frac{x+x^3-x^k-x^{k+1}}{1-x},
$$
so $\bigoplus\pi$ has the generating function
$$
\frac{1-x}{1-2x-x^3+x^k+x^{k+1}},
$$
from which it follows, using Pringsheim's Theorem, that $\gr(\bigoplus\pi)<\kappa$.  Therefore, since we want to find a class $\D$ for which every class with lower growth rate less than $\kappa$ has a $\D$-gridding, $\D$ must contain all increasing oscillations, and by symmetry, all decreasing oscillations, i.e., we must have $\O\subseteq\D$.  This, however, is not enough: $1432$ does not lie in $\O$, yet $\bigoplus 1432$ has the generating function $1/(1-x-x^2-x^3)$ and thus the growth rate $1.83928\dots$, much less than our goal.

Instead, we consider the substitution completion $\S(\O)$.  A routine calculation of the basis of $\S(\O)$ (see Propositions~\ref{prop-basis-WO} and \ref{prop-WO-gridding}) that shows every small permutation class is $\S(\O)$-griddable.  The following proposition then follows.

\begin{proposition}\label{prop-WOk-gridding}
Every permutation class with lower growth rate less than $\kappa$ is $\S(\O_k)$-griddable for some $k$.
\end{proposition}
\begin{proof}
If $\C$ is a permutation class with growth rate less than $\kappa$ then Proposition~\ref{prop-WO-gridding} shows that $\C$ is $\S(\O)$-griddable, and thus $\C$ is $\C\cap\S(\O)$-griddable.  If $\C$ were to contain every increasing oscillation or every decreasing oscillation, then Proposition~\ref{prop-growth-osc} would imply that $\lgr(\C)\ge\kappa$, a contradiction.  Thus, there must be some $k$ for which $\C$ has an $\S(\O_k)$-gridding.
\end{proof}

We move on to bounded substitution depth, for which we much first define the substitution decomposition tree of a permutation.  Proposition~\ref{simple-decomp-1} shows that every permutation is the inflation of a unique simple permutation and, moreover, that the intervals in such an inflation are unique unless the permutation is the inflation of either $12$ or $21$, i.e., unless it is sum or skew decomposable.  There are two ways that this non-uniqueness is typically handled.  The first way, typically used for exact enumeration, is to write $\pi$ as $\alpha_1\oplus\alpha_2$ where $\alpha_1$ is sum indecomposable and $\alpha_2$ is arbitrary.  For our purposes it is more natural to write $\pi$ as $\alpha_1\oplus\cdots\oplus\alpha_m$ where each $\alpha_i$ is sum indecomposable.  By recursively decomposing each interval $\alpha_i$ in this manner, we obtain the {\it substitution decomposition tree\/} of $\pi$, which is formed by decomposing $\pi$ as one of
\begin{itemize}
\item $\sigma[\alpha_1,\dots,\alpha_m]$ where $\sigma$ is a nonmonotone simple permutation,
\item $\alpha_1\oplus\cdots\oplus\alpha_m$ where each $\alpha_i$ is sum indecomposable, or
\item $\alpha_1\ominus\cdots\ominus\alpha_m$ where each $\alpha_i$ is skew sum indecomposable.
\end{itemize}
See Figure~\ref{fig-tree-479832156} for an example; note that in this example, we decompose $321$ as $1\ominus 1\ominus 1$, whereas the alternative would be to decompose it as $1\ominus 21$ and then decompose $21$ again.  The substitution decomposition tree of a permutation is a rooted tree, and so we use terms such as child, parent, ancestor, and descendent when discussing it.  The {\it substitution depth\/} of $\pi$ is then the height of its substitution decomposition tree, so for example, the substitution depth of the permutation from Figure~\ref{fig-tree-479832156} is $3$, while the substitution depth of any nontrivial simple or monotone permutation is $1$.

\begin{figure}
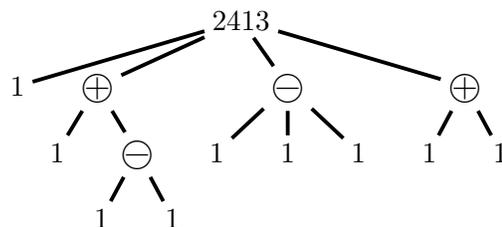

$$
\pstree[nodesep=3pt,levelsep=25pt,linewidth=0.02in]{\TR{2413} }{ 
\TR{1} 
\pstree{ \TR{\bigoplus} }{ 
\TR{1} 
\pstree{ \TR{\bigominus} }{ 
\TR{1} 
\TR{1} 
}
}
\pstree{ \TR{\bigominus} }{
\TR{1} 
\TR{1} 
\TR{1}
} 
\pstree{ \TR{\bigoplus} }{ 
\TR{1} 
\TR{1} 
}
} 
$$
\caption{The substitution decomposition tree of $479832156$.}\label{fig-tree-479832156}
\end{figure}

\begin{proposition}\label{prop-long-path-wedge}
If the permutation $\pi$ has substitution depth at least $8n$ then $\pi$ contains a wedge alternation of length at least $n$.
\end{proposition}
\begin{proof}
Suppose that the substitution depth of the permutation $\pi$ is at least $8n$.  By definition, this means that the substitution decomposition tree of $\pi$ contains a path of length $8n$.  We first claim that we can find a path of length $4n$ which alternates between nodes of type $\oplus$ and $\ominus$.  Note that every node in the substitution decomposition tree is labeled by either $\oplus$, $\ominus$, or a nonmonotone simple permutation.  Every entry of a nonmonotone simple permutation participates in copies of both $12$ and a $21$ (as otherwise the permutation would be sum or skew sum decomposable), so such a node can be replaced (by considering a subpermutation of $\pi$) by either $\oplus$ or $\ominus$.  Therefore, since $\oplus$ and $\ominus$ nodes cannot repeat, and nodes labeled by nonmonotone simple permutations can function as either type, we can find an alternating path of length $4n$, as desired.


With an alternating path of length $4n$ in hand, we divide the $\oplus$ nodes by whether the path lies to the left or to the right.  By considering the reverse of $\pi$ if necessary, we may assume that in at least half of the nodes, the path lies on the left of the tree.  Therefore, we can find a path of length $2n$, alternating between nodes labeled $\oplus$ and $\ominus$, in which the path lies on the left of all $\oplus$ nodes.  We can then, by considering the inverse of $\pi$ if necessary, find a path of length $n$ which alternates between $\oplus$ and $\ominus$ in which the path lies on the left of all nodes.  From this path, it follows that $\pi$ contains a permutation of length $n$ of the form $((((\cdots)\ominus 1)\oplus 1)\ominus 1)\oplus 1$ or $((((\cdots) \oplus 1) \ominus 1) \oplus 1) \ominus 1$, both of which are wedge alternations.
\end{proof}

Since wedge alternations have unbounded substitution depth and a growth rate of only $2$, small classes can have unbounded substitution depth.  However, as we show next, small classes can always be gridded by classes with bounded substitution depth.

\begin{theorem}\label{thm-bounded-subst-depth}
If the permutation class $\C$ does not contain arbitrarily long sums or skew sums of arbitrarily long wedge alternations (which by Proposition~\ref{prop-sum-wedge} would imply that $\lgr(\C)\ge 2.61803$), then $\C$ is $\D$-griddable for a class $\D$ with bounded substitution depth.
\end{theorem}
\begin{proof}
Let $\D_k$ denote the class of all permutations of substitution depth at most $k$.  If $\C$ cannot be $\D_k$-gridded for any $k$, then Theorem~\ref{gridding-characterization} shows that $\C$ contains arbitrarily long sums or skew sums of basis elements of $\D_k$ for integers $k$.  Proposition~\ref{prop-long-path-wedge} shows that all basis elements of $\D_{8k-1}$ contain wedge alternations of length at least $k$, implying that $\C$ contains arbitrarily long sums or skew sums of arbitrarily long wedge alternations, as desired.
\end{proof}


We conclude the section by collecting what we now know about griddings of small classes.

\begin{theorem}\label{thm-subst-gridding-main}
If the permutation class $\C$ satisfies $\lgr(\C)<\kappa$ then $\C$ is $\D$-griddable for a class $\D$ with finitely many simple permutations and bounded substitution depth.
\end{theorem}
\begin{proof}
Proposition~\ref{prop-WOk-gridding} shows that $\C$ is $\D$-griddable for a class $\D$ with finitely many simple permutations, while Theorem~\ref{thm-bounded-subst-depth} shows that $\C$ is $\E$-griddable for a class $\E$ with bounded substitution depth.  Therefore $\C$ is $\D\cap\E$-griddable by Proposition~\ref{grid-intersection}.
\end{proof}

Classes with only finitely many simple permutations and bounded substitution depth are especially nice since they are pwo (by Proposition~\ref{fin-simples-pwo}) and have rational generating functions\footnote{While not explicitly mentioned in Albert and Atkinson~\cite{albert:simple-permutat:} or Brignall, Huczynska, and Vatter~\cite{brignall:simple-permutat:}, this result follows readily from either approach.}.  Our primary interest in such classes stems from how nicely they can be ``sliced'', as we discuss in the next section.

\section{Refined Griddings of Small Permutation Classes}\label{sec-gridding-condition}

Our goal in this section, which constitutes the most technical component of the proof, is to restrict the possible gridding matrices of small classes.  To motivate the proof of that result, we first consider a simpler situation.  In their characterization of classes with polynomial growth, Huczynska and Vatter proved the following.

\begin{theorem}[Huczynska and Vatter~\cite{huczynska:grid-classes-an:}]\label{alternations-gridding-mono}
If the permutation class $\C$ is monotone griddable and does not contain arbitrarily long alternations, then $\C$ is $\M$-griddable for a matrix $\M$ whose cell graph is edgeless and in which every vertex is labeled by a monotone class.
\end{theorem}

We begin by proving the following generalization.

\begin{theorem}\label{alternations-gridding}
Suppose that the permutation class $\C$ is $\D$-griddable, that $\D$ has only finitely many simple permutations and bounded substitution depth, and that $\C$ does not contain arbitrarily long alternations.  Then $\C$ is $\M$-griddable for a matrix $\M$ whose cell graph is edgeless and in which every vertex is labeled by $\D$.
\end{theorem}

Note that this is indeed a generalization of Theorem~\ref{alternations-gridding-mono}, since the class of monotone permutations contains only three simple permutations ($1$, $12$, and $21$) and every nontrivial monotone permutation has substitution depth $1$.

We sketch the argument we use to prove Theorem~\ref{alternations-gridding} first, in order to draw attention to the difficulties involved in this generalization (which also arise in the proof of the major result of this section, Theorem~\ref{small-classes-gridding}).  After discussing how to deal with these issues, we present a formal proof.

Choose a $t\times u$ matrix $\N$ whose entries are all subclasses of $\D$ (or, without loss of generality, equal to $\D$ itself) such that $\C$ is $\N$-griddable and does not contain alternations of length $2m$.  Now choose an $\N$-gridded permutation $\pi\in\C$.  We seek to break up the small alternations in $\pi$ with a refined gridding which does not use too many additional lines.

We say that a rectangle $R$ is {\it separated\/} if it is axis-parallel, $\pi(R)$ is completely contained in one cell of the chosen gridding of $\pi$, and $\pi(R)$ contains (at least) two entries which are separated by an entry in a different cell.  Let $\R$ denote the collection of all separated rectangles in this gridding of $\pi$; it can be shown from the bound on the length of alternations in $\C$ that $\R$ contains no independent set of size $4mtu$, and therefore, by Lemma~\ref{rectangles-lemma}, $\R$ can be sliced by a set of at most $f(4mtu)$ vertical and horizontal lines.  Denote this set by $\L$.

\begin{figure}
\begin{center}
\begin{tabular}{ccc}
\psset{xunit=0.01in, yunit=0.01in}
\psset{linewidth=0.005in}
\begin{pspicture}(0,0)(110,110)
\psaxes[dy=10,Dy=1,dx=10,Dx=1,tickstyle=bottom,showorigin=false,labels=none](0,0)(110,110)
\psline[linecolor=darkgray,linestyle=solid,linewidth=0.02in](105,0)(105,115)
\pscircle*(10,40){0.04in}
\pscircle*(20,10){0.04in}
\pscircle*(30,60){0.04in}
\pscircle*(40,30){0.04in}
\pscircle*(50,80){0.04in}
\pscircle*(60,50){0.04in}
\pscircle*(70,100){0.04in}
\pscircle*(80,70){0.04in}
\pscircle*(90,110){0.04in}
\pscircle*(110,20){0.04in}
\pscircle*(100,90){0.04in}
\end{pspicture}
&\rule{10pt}{0pt}&
\psset{xunit=0.01in, yunit=0.01in}
\psset{linewidth=0.005in}
\begin{pspicture}(0,0)(110,110)
\psaxes[dy=10,Dy=1,dx=10,Dx=1,tickstyle=bottom,showorigin=false,labels=none](0,0)(110,110)
\psline[linecolor=darkgray,linestyle=solid,linewidth=0.02in](0,15)(115,15)
\pscircle*(10,70){0.04in}
\pscircle*(20,10){0.04in}
\pscircle*(30,60){0.04in}
\pscircle*(40,80){0.04in}
\pscircle*(50,50){0.04in}
\pscircle*(60,90){0.04in}
\pscircle*(70,40){0.04in}
\pscircle*(80,100){0.04in}
\pscircle*(90,30){0.04in}
\pscircle*(100,110){0.04in}
\pscircle*(110,20){0.04in}
\end{pspicture}
\end{tabular}
\end{center}
\caption{On the left, an example of a gridded permutation formed by adding a single entry to an increasing oscillation sequence.  On the right, an example of a gridded permutation formed by adding a single entry to a wedge alternation.  Both examples demonstrate that we must be careful when generalizing Theorem~\ref{alternations-gridding-mono}.}
\label{fig-careful-alternations-gridding-mono}
\end{figure}
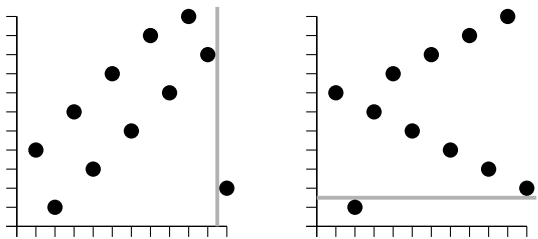

We now describe how these $f(4mtu)$ vertical and horizontal lines can be used to construct a refined gridding of $\pi$ with the properties demanded by Theorem~\ref{alternations-gridding}.  Figure~\ref{fig-careful-alternations-gridding-mono} shows that we need to be careful.  In the permutation on the left of this figure, all of the separated rectangles can be sliced by a single line.  However, if we consider the refined gridding of this permutation given by such a line, we encounter new separated rectangles.  Slicing those rectangles with a new line then leads to even more separated rectangles.  This is why we assume that $\D$ contains only finitely many simple permutations.  The permutation on the right of Figure~\ref{fig-careful-alternations-gridding-mono} exhibits similar behavior, showing why we assume that $\D$ has bounded substitution depth.

Our problem is therefore to use the lines $\L$ to refine the gridding of $\pi$ without introducing a large amount of new separated rectangles.  To do so, we focus on a single cell of the given gridding of $\pi$, which gives rise to a permutation $\tau\in\D$.  This permutation is sliced by some of the lines in $\L$.  We say that an interval of $\tau$ is {\it unsliced\/} (technically, with respect to $\L$) if no line in $\L$ separates two entries of the interval.  A interval of $\tau$ is then a {\it maximal unsliced interval\/} if it is as large as possible with this property.  As the maximal unsliced intervals partition the entries of $\tau$, we seek to bound their number.  Note that a simple permutation of length $n$ will have either $1$ or $n$ maximal unsliced intervals, as it will either be sliced (resulting in $n$ maximal unsliced intervals, all trivial) or not sliced (resulting in it itself being its lone maximal unsliced interval).  On the contrary, a monotone permutation sliced by $\ell$ lines will have at most $\ell+1$ maximal unsliced intervals.  Our next lemma shows how our conditions on the class $\D$ allow us to bound the number of maximal unsliced intervals.

\begin{lemma}\label{lem-max-unsliced-ints}
Suppose that the permutation class $\D$ has only finitely many simple permutations and bounded substitution depth.  Then there is a function $g(\ell)$ such that for any $\tau\in\D$ and any collection of $\ell$ vertical and horizontal lines slicing $\tau$, $\tau$ has at most $g(\ell)$ maximal unsliced intervals.
\end{lemma}
\begin{proof}
We prove the lemma by induction on $\ell$.  Noting that we may take $g(0)=1$, we now assume that $g(\ell-1)$ has been defined, and give a bound for $g(\ell)$.  Take $\tau\in\D$ and a set $\L$ of $\ell$ vertical or horizontal lines slicing $\tau$.  Now choose a set $\L'\subset\L$ with $\ell-1$ lines.  This collection defines a set of unsliced intervals $\mathcal{I}$ which we know from induction can contain at most $g(\ell-1)$ maximal elements.

Now consider the effect of the new line on the previously unsliced intervals $\mathcal{I}$ that it slices.  Suppose that the new line slices through an interval of $\mathcal{I}$ as otherwise there is nothing to prove.  Hence this line slices through a unique minimal interval of $\mathcal{I}$, and therefore the intervals sliced by the new line are precisely those containing this minimal interval.  In other words, the intervals sliced by the new line form a path in the substitution decomposition tree of $\tau$.

Given any previously unsliced interval sliced by this new line, there are two cases, depending on what type of node the interval is in the substitution decomposition tree of $\tau$.  If this interval decomposes as the inflation of a nonmonotone simple permutation, i.e., it is of the form $\sigma[\alpha_1,\dots,\alpha_m]$ for a simple permutation $\sigma$ with $m\ge 4$, then if the new line slices through $\alpha_i$, it creates $m-1$ new maximal intervals $\alpha_1,\dots,\alpha_{i-1},\alpha_{i+1},\dots,\alpha_m$, while if the line slices between two intervals, then each $\alpha_i$ becomes a new maximal unsliced interval.  Otherwise the interval decomposes as $\alpha_1\oplus\cdots\oplus\alpha_m$ or $\alpha_1\ominus\cdots\ominus\alpha_m$.  As both cases are similar, we consider the former.  If the line slices through $\alpha_i$, then $\alpha_1\oplus\cdots\oplus\alpha_{i-1}$ and $\alpha_{i+1}\oplus\cdots\oplus\alpha_m$ become maximal unsliced intervals, while if it slices between $\alpha_{i-1}$ and $\alpha_i$ then the new maximal unsliced intervals are $\alpha_1\oplus\cdots\oplus\alpha_{i-1}$ and $\alpha_{i}\oplus\cdots\oplus\alpha_m$.

Thus we have shown that the new line slices through at most one interval of each height in the substitution decomposition tree of $\tau$, and that at each height it creates at most $\max(s,3)$ new maximal intervals, where $s$ is the length of the longest simple permutation in $\D$.  Therefore, if we suppose that the substitution depth of $\D$ is $h$, we have shown that any $\ell$ vertical and horizontal lines create at most $g(\ell-1)+h\max(s,3)$ new maximal unsliced intervals, completing the proof.
\end{proof}

We will be in a position to complete this sketch of our proof after one more definition.  Given a set of points in the plane, we define their {\it rectangular hull\/} to be the smallest axis-parallel rectangle containing them.

Recall that we had an $\N$-gridded permutation $\pi$ and a collection of at most $f(4mtu)$ vertical and horizontal lines, $\L$, which slice through every separated rectangle of $\pi$.  Lemma~\ref{lem-max-unsliced-ints} shows that these lines create at most $g(f(4mtu))$ maximal unsliced intervals in each cell of the $\N$-gridding of $\pi$, and thus at most $tu\cdot g(f(4mtu))$ maximal unsliced intervals in total.  Let $\HH$ denote the rectangular hulls of these maximal unsliced intervals, and let $\L'$ denote the set of vertical and horizontal lines which coincide with the sides of these hulls.

Note that no two hulls from the same cell of the $\N$-gridding of $\pi$ may overlap, as the hulls are formed from intervals.  Furthermore, by their very definition, the hulls in $\HH$ have points on each of their sides (and also, because $\pi$ is a permutation, two distinct hulls cannot both have points on the same vertical or horizontal line).  Thus, were two of these hulls to overlap either horizontally or vertically, both would contain separated rectangles, a contradiction to our choice of $\L'$.  In other words, $\HH$ is an independent set.  Now let $\M$ denote the matrix which corresponds to the gridding of $\pi$ given by the lines $\L'$, i.e., the matrix in which $\M_{k,\ell}$ is equal to $\D$ if the $(k,\ell)$ cell of the refined gridding of $\pi$ given by the lines $\L'$ is nonempty, and equal to $\emptyset$ otherwise.  The graph of $\M$ has no edges, and $\M$ is of size at most $2tu\cdot g(f(4mtu))\times 2tu\cdot g(f(4mtu))$.

In order to focus on the most important parts of the proof, which we use again to prove Theorem~\ref{small-classes-gridding}, this sketch has skipped over several details such as why it suffices to consider a single $\N$-gridded permutation $\pi$, and why the collection $\R$ of separated rectangles is independent.  This is remedied in the proof below.

\newenvironment{alternations-gridding-proof}{\medskip\noindent {\it Proof of Theorem~\ref{alternations-gridding}.\/}}{\qed\bigskip}
\begin{alternations-gridding-proof}
Suppose that $\C$ is $\N$-griddable for a $t\times u$ matrix $\N$ whose entries are all subclasses of $\D$, and that $\C$ does not contain any alternations of length $2m$.  It suffices to establish that there exist constants $J$ and $K$, depending only on $\C$, so that every $\pi\in\C$ has an $\M$-gridding for some matrix $\M$ of the desired form and of size at most $J\times K$.  The proof will then follow because there are only finitely many such matrices, and thus, as in the proof of Proposition~\ref{grid-union}, the direct sum of all of them will satisfy the conclusion of the theorem.  To this end choose an $\N$-gridded permutation $\pi\in\C$ and let $\R$ denote the set of separated rectangles for the chosen gridding of $\pi$.

Suppose first that $\R$ contains an independent set of size $4mtu$.  Then at least $4m$ of those rectangles will lie completely within some cell say $(k,\ell)$, of the chosen gridding of $\pi$.  Each of these separated rectangles contains two entries that are separated by an entry in another cell, and thus there is a subset of at least $m$ of these rectangles that are separated by entries in the same direction (direction here meaning one of $\{$left, right, up, down$\}$); we suppose that these entries are all separated by points above them, i.e., each pair is separated by an entry in a cell $(k,\ell^+)$ for some $\ell^+>\ell$.  However, we now have that the leftmost points of these separated rectangles, each together with a point that separated them from their partners, form an alternation of length $2m$.  Therefore, since we have assumed that $\C$ has no alternations of this length, $\R$ cannot contain an independent set of size $4mtu$, and thus by Lemma~\ref{rectangles-lemma}, $\R$ can be sliced by a collection, say $\L$, of at most $f(4mtu)$ vertical and horizontal lines.

Therefore, as we showed before, we can use Lemma~\ref{lem-max-unsliced-ints} to establish an $\M$-gridding of $\pi$ for some matrix $\M$ with an edgeless graph, nonempty cells labeled by $\D$, and of size at most $2g(f(4mtu))\times 2g(f(4mtu))$, completing the proof of the theorem.
\end{alternations-gridding-proof}


The main result of this section, below, extends Theorem~\ref{alternations-gridding} to handle larger classes.  The terms involved are defined immediately after the statement.

\begin{theorem}\label{small-classes-gridding}
Suppose that the permutation class $\C$ is $\D$-griddable, that $\D$ has only finitely many simple permutations and bounded substitution depth, and that $\C$ does not contain arbitrarily long $(3,1)$ or triple alternations.  Then $\C$ is $\M$-griddable for a matrix $\M$ whose cell graph satisfies
\begin{enumerate}
\item[(1)] every vertex is labeled by $\D$, the class of monotone permutations, or is empty,
\item[(2)] every nonisolated vertex is labeled by the class of monotone permutations, and
\item[(3)] there are no connected components containing more than two vertices.
\end{enumerate}
In particular, these conditions hold whenever $\lgr(\C)<1+\sqrt{2}\approx 2.41421$.
\end{theorem}

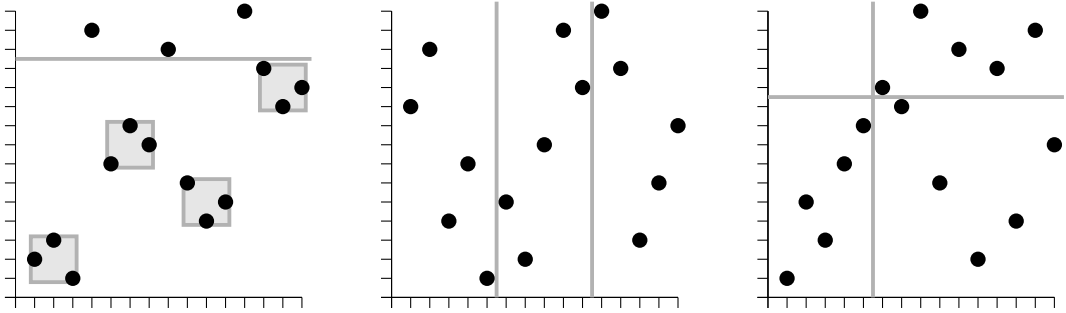
\begin{figure}
\begin{center}
\begin{tabular}{ccccc}
\psset{xunit=0.01in, yunit=0.01in}
\psset{linewidth=0.005in}
\begin{pspicture}(0,0)(150,150)
\psaxes[dy=10,Dy=1,dx=10,Dx=1,tickstyle=bottom,showorigin=false,labels=none](0,0)(150,150)
\psline[linecolor=darkgray,linestyle=solid,linewidth=0.02in](0,125)(155,125)
\psframe[linecolor=darkgray,fillstyle=solid,fillcolor=lightgray,linewidth=0.02in](7,7)(33,33)
\psframe[linecolor=darkgray,fillstyle=solid,fillcolor=lightgray,linewidth=0.02in](47,67)(73,93)
\psframe[linecolor=darkgray,fillstyle=solid,fillcolor=lightgray,linewidth=0.02in](87,37)(113,63)
\psframe[linecolor=darkgray,fillstyle=solid,fillcolor=lightgray,linewidth=0.02in](127,97)(153,123)
\pscircle*(10,20){0.04in}
\pscircle*(20,30){0.04in}
\pscircle*(30,10){0.04in}
\pscircle*(40,140){0.04in}
\pscircle*(50,70){0.04in}
\pscircle*(60,90){0.04in}
\pscircle*(70,80){0.04in}
\pscircle*(80,130){0.04in}
\pscircle*(90,60){0.04in}
\pscircle*(100,40){0.04in}
\pscircle*(110,50){0.04in}
\pscircle*(120,150){0.04in}
\pscircle*(130,120){0.04in}
\pscircle*(140,100){0.04in}
\pscircle*(150,110){0.04in}
\end{pspicture}
&\rule{10pt}{0pt}&
\psset{xunit=0.01in, yunit=0.01in}
\psset{linewidth=0.005in}
\begin{pspicture}(0,0)(150,150)
\psaxes[dy=10,Dy=1,dx=10,Dx=1,tickstyle=bottom,showorigin=false,labels=none](0,0)(150,150)
\psline[linecolor=darkgray,linestyle=solid,linewidth=0.02in](55,0)(55,155)
\psline[linecolor=darkgray,linestyle=solid,linewidth=0.02in](105,0)(105,155)
\pscircle*(10,100){0.04in}
\pscircle*(20,130){0.04in}
\pscircle*(30,40){0.04in}
\pscircle*(40,70){0.04in}
\pscircle*(50,10){0.04in}
\pscircle*(60,50){0.04in}
\pscircle*(70,20){0.04in}
\pscircle*(80,80){0.04in}
\pscircle*(90,140){0.04in}
\pscircle*(100,110){0.04in}
\pscircle*(110,150){0.04in}
\pscircle*(120,120){0.04in}
\pscircle*(130,30){0.04in}
\pscircle*(140,60){0.04in}
\pscircle*(150,90){0.04in}
\end{pspicture}
&\rule{10pt}{0pt}&
\psset{xunit=0.01in, yunit=0.01in}
\psset{linewidth=0.005in}
\begin{pspicture}(0,0)(150,150)
\psaxes[dy=10,Dy=1,dx=10,Dx=1,tickstyle=bottom,showorigin=false,labels=none](0,0)(150,150)
\psline[linecolor=darkgray,linestyle=solid,linewidth=0.02in](0,105)(155,105)
\psline[linecolor=darkgray,linestyle=solid,linewidth=0.02in](55,0)(55,155)
\pscircle*(10,10){0.04in}
\pscircle*(20,50){0.04in}
\pscircle*(30,30){0.04in}
\pscircle*(40,70){0.04in}
\pscircle*(50,90){0.04in}
\pscircle*(60,110){0.04in}
\pscircle*(70,100){0.04in}
\pscircle*(80,150){0.04in}
\pscircle*(90,60){0.04in}
\pscircle*(100,130){0.04in}
\pscircle*(110,20){0.04in}
\pscircle*(120,120){0.04in}
\pscircle*(130,40){0.04in}
\pscircle*(140,140){0.04in}
\pscircle*(150,80){0.04in}
\end{pspicture}
\end{tabular}
\end{center}
\caption{From left to right, a $(3,1)$-alternation, linear triple alternation, and hook triple alternation.}\label{fig-grid-condition}
\end{figure}

We prove Theorem~\ref{small-classes-gridding} in two steps.  First we show that if a class does not have a gridding satisfying condition (2) then it contains arbitrarily long {\it $(3,1)$-alternations\/}, which is our term for permutations that can be divided into two parts $A$ and $B$, say, so that part $A$ consists of nonmonotone intervals of length three, each separated from every other by at least one point in part $B$, and every pair of points in part $B$ is separated by at least one of the intervals of part $A$ (see Figure~\ref{fig-grid-condition}).  In other words, a $(3,1)$-alternation can be obtained by inflating ``one half'' of a horizontal or vertical alternation by nonmonotone intervals of length $3$.  Proposition~\ref{prop-ld-growth} shows that if the permutation class $\C$ contains arbitrarily long $(3,1)$-alternations then $\lgr(\C)\ge 1+\sqrt{2}\approx 2.41421$.

After that, we show that if a class does not satisfy (3) then it contains arbitrarily long {\it triple alternations\/}.  There are two types of these (which are also depicted in Figure~\ref{fig-grid-condition}):
\begin{itemize}
\item A {\it linear triple alternation\/} is one which can be divided into three parts, each with an equal number of points, so that every pair of points in one part is separated by at least one point in each of the other parts.
\item A {\it hook triple alternation\/} is one that can be divided into three parts $A$, $B$, and $C$, again each with an equal number of points, so that no pair of points from $A$ is separated by a point of $C$ or vice versa, but every pair from $A$ or $C$ is separated by at least one point from $B$, and every pair from $B$ is separated by at least one point in $A$ and at least one point in $C$.
\end{itemize}
We establish in Subsection~\ref{subsec-triple-alternations} that if $\C$ contains arbitrarily long triple alternations then $\lgr(\C)\ge 1+\varphi\approx 2.61803$ where $\varphi$ denotes the golden ratio.


\newenvironment{small-classes-gridding-proof}{\medskip\noindent {\it Proof of Theorem \ref{small-classes-gridding}.\/}}{\qed}
\begin{small-classes-gridding-proof}
The beginning of the proof mirrors that of Theorem~\ref{alternations-gridding}.  Suppose that $\C$ is $\N$-griddable for a $t\times u$ matrix $\N$ whose entries are all subclasses of $\D$ and that $\C$ contains neither $(3,1)$-alternations of length $4m$ nor triple alternations of length $3m$.  Following the argument of the previous proof, it suffices to find constants $J$ and $K$, depending only on $\C$, so that every $\pi\in\C$ has an $\M$-gridding for some matrix $\M$ of the desired form and of size at most $J\times K$.

Choose an $\N$-gridded permutation $\pi\in\C$.  We define a {\it $(3,1)$-rectangle\/} to be an axis-parallel rectangle $R$ such that $\pi(R)$ is completely contained in one cell of the chosen gridding, is nonmonotone, and is separated by an entry from another cell of the gridding (see Figure~\ref{fig-fancy-rectangles}).  Let $\R_{(3,1)}$ denote the set of all $(3,1)$-rectangles in the chosen gridding of $\pi$.

\begin{figure}
\begin{center}
\begin{tabular}{ccccc}
\psset{xunit=0.01in, yunit=0.01in}
\psset{linewidth=0.005in}
\begin{pspicture}(0,0)(100,100)
\psframe[linecolor=darkgray,linewidth=0.02in](0,0)(100,100)
\psframe[linecolor=darkgray,fillstyle=solid,fillcolor=lightgray,linewidth=0.02in](7,7)(73,53)
\psline[linecolor=darkgray,linestyle=solid,linewidth=0.02in](0,70)(100,70)
\pscircle*(10,50){0.04in}
\pscircle*(40,10){0.04in}
\pscircle*(70,30){0.04in}
\pscircle*(55,80){0.04in}
\end{pspicture}
&\rule{10pt}{0pt}&
\psset{xunit=0.01in, yunit=0.01in}
\psset{linewidth=0.005in}
\begin{pspicture}(0,0)(100,100)
\psframe[linecolor=darkgray,linewidth=0.02in](0,0)(100,100)
\psframe[linecolor=darkgray,fillstyle=solid,fillcolor=lightgray,linewidth=0.02in](7,17)(23,83)
\psline[linecolor=darkgray,linestyle=solid,linewidth=0.02in](35,0)(35,100)
\psline[linecolor=darkgray,linestyle=solid,linewidth=0.02in](75,0)(75,100)
\pscircle*(10,80){0.04in}
\pscircle*(20,20){0.04in}
\pscircle*(50,40){0.04in}
\pscircle*(90,60){0.04in}
\end{pspicture}
&\rule{10pt}{0pt}&
\psset{xunit=0.01in, yunit=0.01in}
\psset{linewidth=0.005in}
\begin{pspicture}(0,0)(100,100)
\psframe[linecolor=darkgray,linewidth=0.02in](0,0)(100,100)
\psframe[linecolor=darkgray,fillstyle=solid,fillcolor=lightgray,linewidth=0.02in](7,57)(33,93)
\psline[linecolor=darkgray,linestyle=solid,linewidth=0.02in](0,50)(100,50)
\psline[linecolor=darkgray,linestyle=solid,linewidth=0.02in](50,0)(50,100)
\pscircle*(10,60){0.04in}
\pscircle*(30,90){0.04in}
\pscircle*(25,20){0.04in}
\pscircle*(80,75){0.04in}
\end{pspicture}
\end{tabular}
\end{center}
\caption{From left to right, a $(3,1)$-rectangle and two doubly separated rectangles.}\label{fig-fancy-rectangles}
\end{figure}
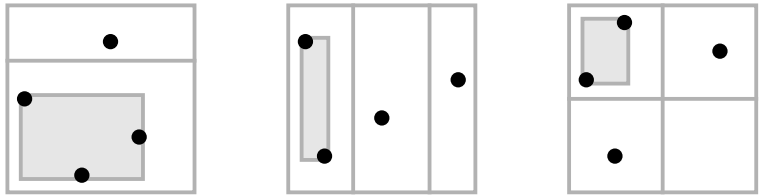

If $\R_{(3,1)}$ contains an independent set of size at least $8mtu$ then, as in the proof of Theorem~\ref{alternations-gridding}, at least $8m$ of those rectangles must lie completely within some cell of the gridding of $\pi$ and then at least $2m$ of those rectangles must be separated in the same direction; let us suppose this direction is ``up.''  However, we can now find a $(3,1)$-alternation of length $4m$ contained in $\pi$: list the $(3,1)$-rectangles from left to right and take a nonmonotone permutation of length $3$ from the $1$st rectangle, a point separating the $2$nd rectangle, a nonmonotone permutation of length $3$ from the $3$rd rectangle, a point separating the $4$th rectangle, and so on.  Therefore, as we have assumed that $\C$ does not contain such permutations, $\R_{(3,1)}$ cannot contain an independent set of size $8mtu$, and thus $\R_{(3,1)}$ can be sliced by a set $\L_{(3,1)}$ of $f(8mtu)$ vertical and horizontal lines by Lemma~\ref{rectangles-lemma}.

Now we define a {\it doubly separated rectangle\/} to be an axis-parallel rectangle $R$ such that $\pi(R)$ is completely contained in one cell of the chosen gridding of $\pi$ and contains (at least) two entries that are separated twice, by two entries lying in different cells from each other and from $R$.  Figure~\ref{fig-fancy-rectangles} shows two examples.  Let $\R_{(2,1,1)}$ denote the set of doubly separated rectangles in $\pi$.

Suppose that $\R_{(2,1,1)}$ contains an independent set, $\I$, of size $mtu(t+u)^2$.  At least $m(t+u)^2$ of these rectangles must lie in the same cell of the chosen gridding of $\pi$; let $\R_{(2,1,1)}'$ denote a set of such doubly separated rectangles.  Each doubly separated rectangle is separated by two entries in different cells, and so in $\R_{(2,1,1)}'$ we can find a collection, say $\R_{(2,1,1)}''$, of at least $m$ doubly separated rectangles whose separating points lie in the same two cells.  However, these rectangles and separating entries give rise to a triple alternation of length at least $3m$, a contradiction.  Therefore we may assume that $\R_{(2,1,1)}$ does not contain an independent set of size $mtu(t+u)^2$, and thus by Lemma~\ref{rectangles-lemma}, it can be sliced by a collection $\L_{(2,1,1)}$ of at most $f(mtu(t+u)^2)$ horizontal and vertical lines.

The rest of the proof is similar in spirit to that of Theorem~\ref{alternations-gridding}, with some additional complications.  Set $\L=\L_{(3,1)}\cup\L_{(2,1,1)}$.  Lemma~\ref{lem-max-unsliced-ints} shows that these lines create at most $g(f(8mtu)+f(mtu(t+u)^2))$ maximal unsliced intervals in each cell of the $\N$-gridding of $\pi$, giving at most $tu\cdot g(f(8mtu)+f(mtu(t+u)^2))$ maximal unsliced intervals in total.  Let $\HH$ denote the rectangular hulls of these maximal unsliced intervals.

As in the proof of Theorem~\ref{alternations-gridding}, no two hulls in $\HH$ which come from the same cell of the $\N$-gridding of $\pi$ may overlap, as the hulls are formed from intervals.  However, unlike the situation in that proof, hulls in $\HH$ from different cells may overlap, but since none of these hulls may contain a $(3,1)$-rectangle, any pair of overlapping hulls must both be monotone.  Moreover, because no hull in $\HH$ may contain a doubly separated rectangle, it follows that if two hulls with at least two points overlap with each other, then neither may overlap with any other hull.  Finally note that a hull with a single point may overlap at most two other hulls: it cannot overlap another singleton hull by the definition of hulls, and if it were to overlap two hulls horizontally (or vertically), then those two cells would themselves overlap.

We now consider a second slicing, where we let $\L'$ denote the set of at most $4tu\cdot g(f(8mtu)+f(mtu(t+u)^2))$ vertical and horizontal lines which coincide with the sides of the hulls in $\HH$.  From our observations about the possible interaction of hulls in $\HH$, we see that the only way in which a line in $\L'$ could slice through a previously unsliced interval would be if that interval were monotone, in which case the new line would break the interval into $2$ maximal unsliced intervals.  Furthermore, we have also seen that this can happen at most twice for each hull in $\HH$ (and indeed, at most once except when the hull contains a single point).  Therefore, even after being sliced by all the lines in $\L'$, the $\N$-gridding of $\pi$ contains at most $3tu\cdot g(f(8mtu)+f(mtu(t+u)^2))$ maximal unsliced intervals.


At this stage we let $\HH'$ denote the rectangular hulls of all maximal unsliced intervals of the $\N$-gridding of $\pi$ with respect to the new lines $\L'$ and let $\S$ denote the set of at most $12tu\cdot g(f(8mtu)+f(mtu(t+u)^2))$ vertical and horizontal lines which coincide with the sides of these hulls.  By construction, no hull in $\HH'$ may overlap with more than one other hull, and this may occur only if both hulls are monotone.  Now let $\M$ denote the matrix in which $\M_{k,\ell}$ is equal to $\D$ if the $(k,\ell)$ cell of the refined gridding of $\pi$ given by the lines $\S$ contains a nonmonotone permutation, to the set of increasing permutations if this cell is increasing, to the set of decreasing permutations if it is decreasing, and to $\emptyset$ if it is devoid of points.  This matrix $\M$ gives us a gridding of $\pi$ satisfying the conditions required by the theorem, completing the proof.
\end{small-classes-gridding-proof}

\section{The Growth Rates Below $\kappa$}\label{sec-small-growth-rates}

With the structure of small permutation classes established in the previous two sections, we now turn our attention to properties such as pwo, finite bases, and of course growth rates.

\begin{proposition}\label{sub-kappa-pwo}
Let $\D$ be a pwo permutation class and suppose that the matrix $\M$ satisfies the conclusion of Theorem~\ref{small-classes-gridding} with respect to $\D$.  Then $\Grid(\M)$ (and thus each of its subclasses) is also pwo.  In particular, by Proposition~\ref{prop-WOk-gridding} and Theorem~\ref{small-classes-gridding}, every permutation class with lower growth rate less than $\kappa$ is pwo.
\end{proposition}
\begin{proof}
By Proposition~\ref{grid-pwo-component} it suffices to show that every connected component of the cell graph of $\M$ is pwo.  By assumption, these are either $\D$ or edges in which both cells are labeled by monotone classes.  The proof is then completed by noting that both classes are pwo (by assumption and Theorem~\ref{pwo-forest}, respectively).
\end{proof}

It is now not difficult to see that small classes must have finite bases.

\begin{theorem}\label{sub-kappa-finite-basis}
Every permutation class with lower growth rate less than $\kappa$ is finitely based.
\end{theorem}
\begin{proof}
Suppose, to the contrary, that there is a permutation class $\C$ with $\lgr(C)<\kappa$ and an infinite basis, $B$.  Choose an infinite subset $A\subseteq B$ containing at most one permutation of each length and let $\C'=\C\cup A$.  Clearly $\lgr(\C')=\lgr(C)<\kappa$, but then Proposition~\ref{sub-kappa-pwo} implies that $\C'$ is pwo, a contradiction.
\end{proof}

Note, however, that it does not follow from Proposition~\ref{sub-kappa-pwo} that there are only countably many small permutation classes.  This is our next result, and answer to Question 2 of the Introduction.

\begin{theorem}\label{theorem-kappa}
There are only countably many permutation classes with lower growth rate less than $\kappa$.
\end{theorem}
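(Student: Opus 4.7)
The plan is to assemble the machinery of the previous sections. Fix a class $\C$ with $\lgr(\C)<\kappa$. Proposition~\ref{prop-WOk-gridding} produces some $k$ for which $\C$ is $\W(\O_k)$-griddable. Since $\kappa<1+\sqrt{2}$, Theorem~\ref{small-classes-gridding} (applied with $\D=\W(\O_k)$) then upgrades this to an $\M$-gridding for a matrix $\M$ in which (i) each entry is empty, monotone, or equal to $\W(\O_k)$, (ii) every nonisolated vertex of $G_\M$ is labeled by a monotone class, and (iii) no connected component of $G_\M$ has more than two vertices. Thus every class with lower growth rate less than $\kappa$ is a subclass of $\Grid(\M)$ for some $\M$ of this restricted form.

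Next I would count the admissible matrices. For fixed $k$ and fixed dimensions $t\times u$, only finitely many such matrices exist (finitely many cells, each with only finitely many possible labels, and the graph-theoretic restrictions only cut this down further). Varying $k$, $t$, and $u$, the collection of admissible matrices is countable. It therefore suffices to show that each $\Grid(\M)$ contains only countably many subclasses, which by Proposition~\ref{pwo-subclasses} reduces to showing that $\Grid(\M)$ is pwo. This is precisely the content of Proposition~\ref{sub-kappa-pwo}: because $\W(\O_k)$ is itself pwo, each connected component of $\M$ is pwo (the singleton components directly, the edge components via Theorem~\ref{pwo-forest}), and Proposition~\ref{grid-pwo-component} then lifts pwo-ness to all of $\Grid(\M)$.

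The one subtlety worth flagging inside this argument is the pwo-ness of the ground class $\W(\O_k)$ that is invoked when applying Proposition~\ref{sub-kappa-pwo}. This follows from Proposition~\ref{fin-simples-pwo} once we observe that $\W(\O_k)$ and $\O_k$ have the same simple permutations (noted in the ``inflations and wreath closures'' subsection) and that $\O_k$ contains only finitely many simples, namely the oscillations of length at most $k$. Assembling everything, the family of permutation classes with $\lgr<\kappa$ is a countable union, indexed by admissible matrices $\M$, of countable sets of subclasses of $\Grid(\M)$, and hence is countable. The genuine obstacles were cleared earlier --- chiefly in Theorem~\ref{small-classes-gridding} and in the enumerative estimates relegated to the Appendix --- so what remains is essentially bookkeeping.
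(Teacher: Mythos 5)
Your argument is correct and matches the paper's proof essentially line for line: invoke Proposition~\ref{prop-WOk-gridding} and Theorem~\ref{small-classes-gridding} to put each small class inside some $\Grid(\M)$ from a countable family of admissible matrices, then use Propositions~\ref{sub-kappa-pwo} and~\ref{pwo-subclasses} to conclude each such $\Grid(\M)$ has only countably many subclasses. Your explicit note that $\W(\O_k)$ is pwo because it has only finitely many simple permutations (via Proposition~\ref{fin-simples-pwo} and the fact that wreath closure preserves the set of simples) is a worthwhile fill-in of a step the paper leaves implicit in the ``In particular'' clause of Proposition~\ref{sub-kappa-pwo}.
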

\begin{proof}
We have already noted that each of these classes lies in $\Grid(\M)$ for some matrix $\M$ which satisfies the conclusion of Theorem~\ref{small-classes-gridding} with respect to $\S(\O_k)$ for some $k$.  Note that for every $k$ there are only countably many matrices of this form.  Now consider a fixed matrix $\M$ of this form and let $\gimel_\M$ denote the set of permutation classes contained in $\Grid(\M)$.  Proposition~\ref{sub-kappa-pwo} shows that $\Grid(\M)$ is pwo, so $\gimel_\M$ is countable by Proposition~\ref{pwo-subclasses}.  Thus we have that the set of permutation classes with lower growth rate less than $\kappa$ is the countable union of a countable union of countable sets $\bigcup_{k\ge 1}\bigcup_{\M}\gimel_\M$ where the inner union is over all matrices $\M$ satisfying Theorem~\ref{small-classes-gridding} with respect to $\S(\O_k)$, proving the theorem.
\end{proof}

As we now have, by Proposition~\ref{sub-kappa-pwo}, that small classes are pwo, we can apply Propositions~\ref{pwo-subclasses-dcc}, \ref{pwo-atomic-gr}, and \ref{pwo-grid-irreducible} to study their possible growth rates.

\begin{theorem}\label{gr-atomic-grid-irreducible}
If the permutation class $\C$ satisfies $\lgr(\C)<\kappa$ then $\gr(\C)$ exists and is equal to $0$, $1$, $2$, or the growth rate of a subclass that is either sum or skew sum complete.
\end{theorem}
\begin{proof}
First set $\C^0=\C$ and choose an atomic subclass $\A^0\subseteq\C^0$ for which $\ugr(\A^0)=\ugr(\C^0)$; such a class exists by Proposition~\ref{pwo-atomic-gr} because $\C^0$ is pwo.  As $\A^0$ is itself pwo, Proposition~\ref{pwo-grid-irreducible} shows that $\A^0$ is $\D^0$-griddable for a grid irreducible class $\D^0\subseteq\A^0$.  To complete the base case, note that $\lgr(\A^0)\le\lgr(\C^0)<1+\sqrt{2}$ and so $\A^0$ is $\M^0$-griddable for a matrix $\M^0$ which satisfies the conditions of Theorem~\ref{small-classes-gridding} with respect to $\D^0$.

For each $i\ge 1$, let $\C^i$ denote a restriction of $\A^{i-1}$ to a connected component of $\M^{i-1}$ which satisfies $\ugr(\C^i)=\ugr(\A^{i-1})$, noting that such a class exists by Proposition~\ref{grid-gr-component}.  If this connected component is of size two then, by our conditions on $\M^{i-1}$, $\C^i$ is contained in a $1\times 2$ or $2\times 1$ monotone grid class.  This case is handled by Proposition~\ref{prop-gr-mono-vector}, which shows that subclasses of monotone grid classes of vectors have integral growth rates, and therefore subclasses of $1\times 2$ or $2\times 1$ monotone grid classes have growth rates of $0$, $1$, or $2$.

Let us therefore suppose this connected component consists of a single cell of $\M^{i-1}$ and thus $\C^i\subseteq\D^{i-1}$.  Now choose an atomic class $\A^i\subseteq\C^i$ with $\ugr(\A^i)=\ugr(\C^i)$ and a grid irreducible subclass $\D^i\subseteq\A^i$ so that $\A^i$ is $\D^i$-griddable.  Finally, choose a matrix $\M^i$ satisfying the conditions of Theorem~\ref{small-classes-gridding} with respect to $\D^i$ for which $\A^i$ is $\M^i$-griddable.

In this process we create a descending chain of classes
$$
\C=\C^0\supseteq \A^0\supseteq\D^0\supseteq\C^1\supseteq\A^1\supseteq\D^1\supseteq\cdots,
$$
all with identical upper growth rates.  As $\C$ is pwo this chain must terminate by Proposition~\ref{pwo-subclasses-dcc}; let $h$ denote the least integer such that $\C^{h+1}=\C^{h}$.

We have $\C^{h}=\A^{h}=\D^{h}=\C^{h+1}$, so $\C^h$ is both grid irreducible and atomic, and thus, by Proposition~\ref{atomic-grid-irreducible}, either sum or skew complete.   By construction we also have that $\ugr(\C^h)=\ugr(\C)$.  Proposition~\ref{arratia-gr} shows that $\gr(\C^h)$ exists, and thus we must have $\lgr(\C)=\ugr(\C)=\gr(\C^h)$, so $\gr(\C)$ exists and is equal to $\gr(\C^h)$, completing the proof.
\end{proof}

Theorem~\ref{gr-atomic-grid-irreducible} reduces our search for growth rates below $\kappa$ to sum complete classes (by symmetry).  As we now demonstrate, these sum complete classes can be enumerated via Proposition~\ref{enum-oplus-completion}, using the results of Subsection~\ref{subsec-sum-indecomps}.

\begin{theorem}\label{small-growth-rates}
The sub-$\kappa$ growth rates of permutation classes consist precisely of $0$ and roots of the following families of polynomials:
$$
\begin{array}{r|l|l}
&\mbox{sequence of sum indecomposables}&\mbox{growth rate $=$ the largest root of}\\
\hline
\mbox{(I)}&(1\times k)&x^{k+1}-2x^k+1\\
\mbox{(II)}&(1\times\infty)&x-2\\
\mbox{(III)}&(1,1,2\times k, 1\times\ell)&(x^3-2x^2-1)x^{k+\ell}+x^\ell+1\\
\mbox{(IV)}&(1,1,2\times k, 1\times\infty)&(x^3-2x^2-1)x^k+1\\
\mbox{(V)}&(1,1,2,3)\mbox{ and }(1,1,2,3,1)&x^4-x^3-x^2-2x-3\\
&&\mbox{and }x^5-x^4-x^3-2x^2-3x-1\\
\mbox{(VI)}&(1,1,3)\mbox{ and }(1,1,3,1)&x^3-x^2-x-3\\
&&\mbox{and }x^4-x^3-x^2-3x-1
\end{array}
$$
\end{theorem}

The smallest of these growth rates that is greater than $2$, $\nu\approx 2.06599$, occurs when $k=\ell=1$ in family (III).  Furthermore, the growth rates of type (III) accumulate at growth rates of type (IV) which themselves accumulate at $\kappa$, so $\kappa$ is the smallest accumulation point of accumulation points of growth rates of permutation classes.

\begin{proof}
By Theorem~\ref{gr-atomic-grid-irreducible} it suffices to consider small monotone grid classes (which have integral growth rates by Proposition~\ref{prop-gr-mono-vector}) and (by symmetry) sum complete classes, so suppose that $\C$ is a sum complete class.  Let $s_n$ denote the number of sum indecomposable permutations of length $n\ge 1$ contained in $\C$, so the generating function for $\C$ is $1/(1-\sum s_nx^n)$ by Proposition~\ref{enum-oplus-completion}.  We say that the sequence $(s_n)$ is {\it small\/} if it leads to a growth rate less than $\kappa$ and {\it large\/} otherwise.  We also write $(s_n)\le (t_n)$ if $s_n\le t_n$ for all $n\ge 1$; note that if $(s_n)$ is large and $(s_n)\le (t_n)$ then $(t_n)$ is also large.

We first show that the sequence
$$
(1, 1, \underbrace{2, \dots, 2}_k, 4),
$$
which we abbreviate as $(1,1,2\times k,4)$, is large for all $k\ge 0$ (although, when $k=0$ this sequence is impossible for a permutation class to achieve because there are only $3$ sum indecomposable permutations of length $3$).  The generating function for the corresponding class is, by Proposition~\ref{enum-oplus-completion},
$$
f
=
\frac{1}{1-x-x^2-2x^3-\cdots-2x^{k+2}-4x^{k+3}}
=
\frac{1-x}{1-2x-x^3-2x^{k+3}+4x^{k+4}}.
$$
By Pringsheim's Theorem, the growth rate of this class is $1/r$ where $r$ denotes the smallest positive root of the denominator of $f$.  Equivalently, setting $z=1/x$ and multiplying by $z^{k+4}$, the growth rate of this class is equal to the largest positive root of
\begin{eqnarray*}
h
&=&
z^{k+4}\left(1-2(1/z)-(1/z)^3-2(1/z)^{k+3}+4(1/z)^{k+4}\right)\\
&=&
z^{k+4}-2z^{k+3}-z^{k+1}-2z+4\\
&=&4-2z-z^{k+1}\left(1+2z^2-z^3\right).
\end{eqnarray*}
Since $h(\kappa)=4-2\kappa<0$ and the leading coefficient of $g$ is positive, $g$ has a real root greater than $\kappa$, establishing that the sequence $(1,1,2\times k,4)$ is large.  Therefore if any sum complete class has $4$ or more sum indecomposable of the same length, then Proposition~\ref{prop-2-sum-indecomps} shows that its growth rate is greater than $\kappa$.

A similar computation shows that the sequence $(1,1,2,3\times k)$ is large for all $k\ge 2$.  By Proposition~\ref{prop-3-sum-indecomps}, if any sum complete class has $3$ sum indecomposable permutations of length $n\ge 5$, then it has at least $3$ sum indecomposable permutations of every length between $4$ and $n$, and by Proposition~\ref{prop-2-sum-indecomps}, it then has $2$ sum indecomposable permutations of length $3$.  Therefore, this computation shows that no small sum complete class can have $3$ or more sum indecomposable permutations of the same length $n\ge 5$.  It remains to consider the classes with $3$ sum indecomposable permutations of length $4$.  Again, similar computations show that the sequences $(1,1,2\times k,3,1,1)$ and $(1,1,2\times k, 3,2)$ are large for all $k\ge 0$.  Finally, the sequence $(1,1,3,1,1)$ leads to a growth rate of precisely $\kappa$, and the sequence $(1,1,3,2)$ is large.

Of the classes with at most two sum indecomposable permutations of each length, the sequence $(1,1,2\times\infty)$ leads to a growth rate of precisely $\kappa$, so all other sequences of this form are small.

These remarks show that if $\C$ is a small sum complete class with $s_n$ sum indecomposable permutations of length $n$ then:
\begin{itemize}
\item $s_n\le 3$ for all $n$,
\item $s_n\le 2$ for all $n\ge 5$,
\item $s_n\le 1$ for all sufficiently large $n$,
\item if $s_3=3$ then $s_4\le 1$ and $s_5=0$,
\item if $s_4=3$ then $s_3=2$, $s_5=1$, and $s_n=0$ for all $n\ge 6$.
\end{itemize}
This leaves only the few possibilities for small sequences listed in the statement of the theorem, and so all that remains is to exhibit permutation classes with such sequences of sum indecomposable permutations.  We list the sum indecomposable permutations of such classes below.
$$
\begin{array}{r|l|l}
&\mbox{sequence of sum indecomposables}&\mbox{growth rate $=$ the largest root of}\\
\hline
\mbox{(I)}&(1\times k)&\{n\cdots 21 : 1\le n\le k\}\\
\mbox{(II)}&(1\times\infty)&\{n\cdots 21 : n\ge 1\}\\
\mbox{(III)}&(1,1,2\times k, 1\times\ell)&\{n\cdots 21 : 1\le n\le k+\ell+2\}\\
&&\cup\{1\ominus 12\cdots (n-1) : 3\le n\le k+2\}\\
\mbox{(IV)}&(1,1,2\times k, 1\times\infty)&\{n\cdots 21 : n\ge 1\}\\
&&\cup\{1\ominus 12\cdots (n-1) : 3\le n\le k+2\}\\
\mbox{(V)}&(1,1,2,3)\mbox{ and }(1,1,2,3,1)&\{1,21,312,321,4123,4132,4321\}\\
&&\mbox{and }\{1,21,312,321,4123,4132,4321,54321\}\\
\mbox{(VI)}&(1,1,3)\mbox{ and }(1,1,3,1)&\{1,21,231,312,321\}\\
&&\mbox{and }\{1,21,231,312,321,4321\}
\end{array}
$$
Note that this is not an exhaustive list of possible sets of sum indecomposable permutations, but rather one example for each sequence.
\end{proof}

\section{Concluding Remarks}

\begin{figure}
\begin{footnotesize}
\begin{center}
\begin{tabular}{ccccccccccc}
\psset{xunit=0.007in, yunit=0.007in}
\begin{pspicture}(0,0)(20,30)
\pscircle*(10,15){0.04in}
\rput[c](10,0){$1$}
\end{pspicture}
&\rule{1pt}{0pt}&
\psset{xunit=0.007in, yunit=0.007in}
\psset{linewidth=0.02in}
\begin{pspicture}(0,0)(50,30)
\pscircle*(10,15){0.04in}
\pscircle*(40,15){0.04in}
\pscurve(10,15)(20,25)(30,25)(40,15)
\rput[c](10,0){$1$}
\rput[c](40,0){$2$}
\end{pspicture}
&\rule{1pt}{0pt}&
\psset{xunit=0.007in, yunit=0.007in}
\psset{linewidth=0.02in}
\begin{pspicture}(0,0)(80,30)
\pscircle*(10,15){0.04in}
\pscircle*(40,15){0.04in}
\pscircle*(70,15){0.04in}
\pscurve(10,15)(25,25)(55,25)(70,15)
\rput[c](10,0){$1$}
\rput[c](40,0){$2$}
\rput[c](70,0){$3$}
\end{pspicture}
&\rule{1pt}{0pt}&
\psset{xunit=0.007in, yunit=0.007in}
\psset{linewidth=0.02in}
\begin{pspicture}(0,0)(110,30)
\pscircle*(10,15){0.04in}
\pscircle*(40,15){0.04in}
\pscircle*(70,15){0.04in}
\pscircle*(100,15){0.04in}
\pscurve(10,15)(40,25)(70,25)(100,15)
\rput[c](10,0){$1$}
\rput[c](40,0){$2$}
\rput[c](70,0){$3$}
\rput[c](100,0){$4$}
\end{pspicture}
&\rule{1pt}{0pt}&
\psset{xunit=0.007in, yunit=0.007in}
\psset{linewidth=0.02in}
\begin{pspicture}(0,0)(110,30)
\pscircle*(10,15){0.04in}
\pscircle*(40,15){0.04in}
\pscircle*(70,15){0.04in}
\pscircle*(100,15){0.04in}
\pscurve(10,15)(25,25)(55,25)(70,15)
\pscurve(40,15)(55,25)(85,25)(100,15)
\rput[c](10,0){$1$}
\rput[c](40,0){$2$}
\rput[c](70,0){$3$}
\rput[c](100,0){$4$}
\end{pspicture}
&\rule{1pt}{0pt}&
\psset{xunit=0.007in, yunit=0.007in}
\psset{linewidth=0.02in}
\begin{pspicture}(0,0)(140,30)
\pscircle*(10,15){0.04in}
\pscircle*(40,15){0.04in}
\pscircle*(70,15){0.04in}
\pscircle*(100,15){0.04in}
\pscircle*(130,15){0.04in}
\pscurve(10,15)(50,25)(90,25)(130,15)
\rput[c](10,0){$1$}
\rput[c](40,0){$2$}
\rput[c](70,0){$3$}
\rput[c](100,0){$4$}
\rput[c](130,0){$5$}
\end{pspicture}
\end{tabular}
\end{center}
\end{footnotesize}
\caption{The sum indecomposable ordered graphs used to make a hereditary property of ordered graphs with growth rate $\approx 2.03166$.}\label{fig-ordered-graphs}
\end{figure}
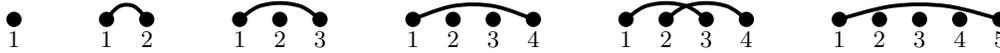

\noindent{\bf Ordered graphs.}
Growth rates of hereditary properties of other types of combinatorial structure (analogues of permutation classes) have received considerable attention recently.  The case of ordered graphs, studied by Balogh, Bollob\'as, and Morris~\cite{balogh:hereditary-prop:ordgraphs}, bears a striking resemblance to the permutation case.  Note that while the sub-$2$ growth rates of permutation classes and of hereditary properties of ordered graphs are identical, this is not the case above $2$; there is a hereditary property of ordered graphs whose growth rate is the largest real root of $1+2x+x^2+x^3+x^4-x^5$, approximately $2.03166$ (to see this, note that the definition of direct sum extends naturally to ordered graphs, and take the hereditary property of ordered graphs whose sum indecomposable graphs are shown in Figure~\ref{fig-ordered-graphs}), while we have proved that this is not a growth rate of any permutation class.  Balogh, Bollob\'as, and Morris~\cite{balogh:hereditary-prop:ordgraphs} conjecture that this is the smallest growth rate above $2$ for ordered graphs.

Conversely, every growth rate of a permutation class is the growth rate of a hereditary property of ordered graphs; simply consider ordered versions of the graphs $G_\pi$ for permutations $\pi$ in the class.

\minisec{Growth rates beyond $\mathbf{\kappa}$}
Balogh, Bollob\'as, and Morris~\cite{balogh:hereditary-prop:ordgraphs} had conjectured that the set of growth rates of ordered graphs (and thus also of permutation classes) has no accumulation points from above and contains only integers and irrational algebraic numbers.  Albert and Linton~\cite{albert:growing-at-a-pe:} disproved these conjectures by showing that there is a perfect uncountable set (i.e., a set equal to its accumulation points) of growth rates of permutation classes between the unique positive root of $3+4x+2x^2+2x^3+x^4-x^5$, $\approx 2.47665$, and $3$.

Vatter~\cite{vatter:permutation-cla} has since proved that the set of growth rates of permutation classes contains every number greater than or equal to the unique real root of $x^5-2x^4-2x^2-2x-1$, $\approx 2.48187$.  In some sense this leaves us close to the ultimate aim of this line of research:

\begin{problem}
Characterize all growth rates of permutation classes.
\end{problem}

However, there are many technical difficulties with extending our techniques to deal with larger permutation classes.  The first of these is partial well order, which need not hold for permutation classes of growth rate $\kappa$ or more, but is essential to the proof of Theorem~\ref{gr-atomic-grid-irreducible}.  Even if these difficulties were overcome, and the problem simplified to characterizing growth rates of sum complete permutation classes, we would still need a significant extension of Propositions~\ref{prop-3-sum-indecomps}.

\minisec{The existence of growth rates}
Of particular irritation is the fact that arbitrary permutation classes are not known to have growth rates.  We have shown (Theorem~\ref{gr-atomic-grid-irreducible}) that every small permutation class $\C$ has a growth rate, and that this growth rate is either $0$, $1$, $2$, or the growth rate of the largest sum or skew sum complete subclass in $\C$.  However, this behavior surely cannot extend much further, especially for non-pwo permutation classes.  Therefore the general problem of proving that permutation classes have growth rates remains wide open.




\minisec{Enumeration below $\kappa$}
Thus far we have ignored the exact enumeration problem, not out of neglect but rather complete ignorance.  Permutation classes of growth rate $1$ are known to have eventually polynomial enumeration (first established by Kaiser and Klazar~\cite{kaiser:on-growth-rates:}, this result follows rather quickly from the results presented here, particularly Theorem~\ref{alternations-gridding} applied to the monotone griddings that such classes must have; see \cite{huczynska:grid-classes-an:} for the missing details).

Moving beyond the very small classes, Albert, Linton, and Ru\v{s}kuc~\cite{albert:the-insertion-e:} have introduced a correspondence between permutation classes and formal languages, known as the insertion encoding, in which every finitely based class which does not contain arbitrarily long vertical alternations (see Footnote~\ref{fn-alternations} for the definition of vertical alternations) corresponds to a regular language (which can be computed by the Maple package {\sc InsEnc} described in Vatter~\cite{vatter:finding-regular:}).  As no class with a growth rate below $2$ can contain arbitrarily long alternations (let alone vertical alternations), these all have rational generating functions.  It is thus natural to ask for the smallest class with an nonrational generating function.

\begin{conjecture}\label{smallest-nonrational}
Every permutation class with growth rate less than $\kappa$ has a rational generating function.
\end{conjecture}

Note that there are classes with growth rate at most $\kappa$ which have not only nonrational, but nonholonomic, generating functions%
\footnote{\addtocounter{theorem}{1}%
{\bf Proposition~\arabic{section}.\arabic{theorem}}
(Atkinson and Stitt~\cite{atkinson:restricted-perm:wreath}; Murphy~{\cite[Chapter 9]{murphy:restricted-perm:}}).
{\it If the permutation class $\C$ contains an infinite antichain then it also contains a subclass with a nonholonomic generating function.\/}

\newenvironment{fn-proof}{\medskip\noindent {\it Proof.\/}}{\qed}
\begin{fn-proof}
Choose an infinite antichain $A\subseteq\C$ that has at most one element of each length.  If $A_1\neq A_2$ are two subsets of $A$ then the two subclasses $\C\cap\Av(A_1)$ and $\C\cap\Av(A_2)$ have different enumerations.  Because $A$ is infinite, this gives $2^{\aleph_0}$ different generating functions.  Now notice that if $f$ is a holonomic generating function for a permutation class then the recurrence satisfied by $f$ may be chosen to have integral coefficients and integral initial conditions, and so there are only countably many holonomic generating functions for permutation classes.
\end{fn-proof}}.
However, our belief in Conjecture~\ref{smallest-nonrational} is based on more than coincidence.  Suppose that $\C$ is a permutation class with $\gr(\C)<\kappa$, so $\C$ is $\M$-griddable for a matrix $\M$ satisfying the conditions of Theorem~\ref{small-classes-gridding}.  Then the insertion encoding shows that every restriction of $\C$ to a connected component of $\M$ has a rational generating function (possibly by considering a symmetry)%
\footnote{Also, as we remarked at the end of Section~\ref{sec-subst-gridding}, every cell of this gridding has a rational generating function, since the cells are labeled by classes with finitely many simple permutations and bounded substitution depth.}%
.  The only stumbling block is that permutations often have multiple $\M$-griddings, but they cannot have {\it too\/} many, and this is exactly the sort of problem one expects to be able to handle with regular languages.

\appendix
\section{Calculations}

In the calculations that follow we make frequent use of regular languages, and the reader is referred to Hopcroft, Motwani, and Ullman~\cite{hopcroft:introduction-to:a} for all concepts and notation not defined here.  Given a finite alphabet $A$, we denote by $A^*$ the {\it free monoid over $A$\/}.  This is the regular language consisting of all (possibly empty) words (or, sequences) over $A$, i.e., $A^*=\{a_1\cdots a_n : \mbox{$a_i\in A$ for all $i$}\}$.  We further define $A^+$ as the language of all nonempty words over $A$.  Given two regular languages $K$ and $L$ we denote their union by $K\cup L$ and their concatenation as $KL$ (the set of all words $k\ell$ where $k\in K$ and $\ell\in L$).

\subsection{Increasing Oscillations and Their Substitution Completion}\label{app-oscillations}

The results of Section~\ref{sec-subst-gridding} require us to show that small permutation classes have $\S(\O)$-griddings, where $\O$ denotes the class of all permutations contained in an oscillation.  In order to establish this we first review a basic fact about substitution completions.

\begin{proposition}[Albert and Atkinson~\cite{albert:simple-permutat:}]\label{substitution-completion-basis}
The basis of the substitution completion of the permutation class $\C$, $\S(\C)$, consists of all minimal simple permutations not contained in $\C$.
\end{proposition}
\begin{proof}
Suppose first that $\beta$ is a nonsimple basis element of $\S(\C)$ and express $\beta$ as the inflation $\sigma[\alpha_1,\dots,\alpha_m]$ where $\sigma$ is simple.  If $\sigma$ or one of the $\alpha_i$ is not contained in $\S(\C)$ then it contains a basis element of $\S(\C)$, so $\beta$ is not a basis element of $\S(\C)$, a contradiction.  Thus $\beta$ lies in $\S(\C)$, again a contradiction.  Therefore we conclude that the basis elements of $\S(\C)$ are all simple, and thus they are the minimal simple permutations not contained in $\S(\C)$ by the definition of basis, and the proposition follows by observing that $\S(\C)$ and $\C$ contain the same set of simple permutations.
\end{proof}

This proposition allows us to compute basis of the substitution completion of $\O$:

\begin{proposition}\label{prop-basis-WO}
The basis of $\S(\O)$ consists of $25314$, $41352$, $246153$, $251364$, $314625$, $351624$, $415263$, and every symmetry of one of these permutations.
\end{proposition}
\begin{proof}
Let $B$ denote the basis specified in the statement of the theorem.  It can be checked that each of the $7$ permutations listed are basis elements of $\S(\O)$, as they are simple and do not lie in $\O$.  Since $\O$ is closed under all eight permutation class symmetries, $\S(\O)$ --- and thus the basis of $\S(\O)$ --- must be as well.  Therefore it suffices to prove that every basis element of $\S(\O)$ is a symmetry of one of the $7$ basis elements listed, that is, an element of $B$.  By Proposition~\ref{substitution-completion-basis}, this amounts to proving that every simple permutation not contained in $\O$ contains an element of $B$.  Let $\pi$ denote a simple permutation not contained in $\O$.  We prove the fact by induction on the length, $n$, of $\pi$; as it is easy to check for $n\le 6$, we will assume that $n\ge 7$.

If $\pi$ is a simple parallel alternation then $\pi$ contains at least one of $246135$, $362514$, $415263$, or $531642$, which are all symmetries of $415263$, so $\pi$ contains an element of $B$, as desired.

Thus we may assume that $\pi$ is not a parallel alternation, and thus by Theorem~\ref{thm-schmerl-trotter}, $\pi$ contains a simple permutation of length $n-1$.  Label the indices of this simple permutation $1\le i_1<\cdots<i_{n-1}\le n$ and let $i=[n]\setminus\{i_1,\dots,i_{n-1}\}$ denote the index of the missing entry.  By the minimality of $\pi$, these entries must be order isomorphic to an oscillation, and by symmetry we may assume that they are order isomorphic to $4,1,6,3,\dots,2k+2,2k-1$ if $n-1$ is even or to $4,1,6,3,\dots,2k,2k-3,2k-1$ if $n-1$ is odd.

It is clear that if $i<i_1$ or $i>i_3$ then the permutation obtained from $\pi$ by removing the entry in position $i_2$ must still be simple and not order isomorphic to an oscillation; thus it must contain an element of $B$ by induction, so $\pi$ does as well.  This also occurs if $\pi(i)>\pi(i_5)$, so we may assume that $i_1<i<i_3$ and $\pi(i)<\pi(i_5)$.  In this case, the permutation obtained from $\pi$ by removing the entry in position $i_{n-2}$ if $n$ is even and $i_{n-1}$ if $n$ is odd is simple and not order isomorphic to an oscillation, and thus we are done again by induction, completing the proof.
\end{proof}

As a consequence of this basis result, we see that all small classes have $\S(\O)$-griddings.

\begin{proposition}\label{prop-WO-gridding}
Every class with lower growth rate at most the unique positive root of $1+3x+3x^2+2x^3+x^4+x^5-x^6$, $\approx 2.24409$, has an $\S(\O)$-gridding.
\end{proposition}
\begin{proof}
Theorem~\ref{gridding-characterization} shows that the permutation class $\C$ fails to have an $\S(\O)$-gridding if and only if it contains either $\bigoplus\beta$ or $\bigominus\beta$ for some basis element $\beta$ of $\S(\O)$.  Therefore we need only compute (via Proposition~\ref{enum-oplus-completion}) the growth rates of $\bigoplus\beta$ and $\bigominus\beta$ for each basis element $\beta$ specified in the preceding proposition.  The smallest such classes are the sum completion of $251364$ and the skew sum completion of its reverse.  The generating function for the nonempty sum indecomposable permutations contained in $251364$ is $x+x^2+2x^3+3x^4+3x^5+x^6$, and thus the generating function for the sum completion is $1/(1-x-x^2-2x^3-3x^4-3x^5-x^6)$, which has the growth rate stated in the proposition.
\end{proof}

\subsection{Sums of Wedge Alternations}\label{sec-large-bounded}

In order to use Theorem~\ref{thm-bounded-subst-depth}, we need to enumerate the sum completion of wedge alternations.

\begin{proposition}\label{prop-sum-wedge}
If the permutation class $\C$ contains arbitrarily long sums or skew sums of arbitrarily long wedge alternations then $\lgr(\C)\ge 1+\varphi\approx 2.61803$ (here $\varphi$ denotes the golden ratio).
\end{proposition}
\begin{proof}
By symmetry, let us suppose that $\C$ contains arbitrarily long sums of arbitrarily long wedge alternations.  There are four ways to orient a wedge alternation, $<$, $>$, $\vee$, and $\wedge$, and so we need to count the nonempty sum indecomposable permutations in the closures of each orientation.  A permutation in the closure of a wedge alternation is sum indecomposable if and only if it is skew decomposable, so let $f_\oplus$ denote the generating function for nonempty sum decomposable permutations in the closure of a wedge alternation, $f_\ominus$ the generating function for nonempty skew decomposable permutations, and $f$ the generating function for all nonempty permutations.  Hence we have $f=x+f_\oplus+f_\ominus$ and $f_\ominus=f_\oplus$.

For wedge alternations oriented as $<$, we have that $\pi\ominus 1$ and $\pi\oplus 1$ lie in its closure whenever $\pi$ lies in its closure, and similar rules apply for all other orientations.  It follows that no matter how the wedge alternations are oriented, we have $f_\oplus=f_\ominus=xf=(x+x^2)/(1-2x)$.  Therefore the generating function for the sum completion of some orientation of wedge alternations is $(1-2x)/(1-3x+x^2)$, from which the proposition follows.
\end{proof}

\subsection{Monotone Grid Classes of Vectors, and Their Subclasses}\label{sec-alt-subclasses}

Monotone grid classes of row vectors were first studied by Atkinson, Murphy, and Ru\v{s}kuc~\cite{atkinson:partially-well-:} and Albert, Atkinson, and Ru\v{s}kuc~\cite{albert:regular-closed-:}, who introduced a encoding for such classes (under the name ``$W$-classes'').  Let $\M$ be a $t\times 1$ monotone grid class, meaning that every cell of $\M$ is labeled by $\Av(12)$, $\Av(21)$, or $\emptyset$.  Their encoding associates to each gridded permutation $\pi\in\Grid(\M)$ of length $n$ the word $w_\pi=w_\pi(1)\cdots w_\pi(n)$ where $w_\pi(i)=k$ if the entry $i$ lies in cell $(k,1)$.  The following two propositions are almost certainly folklore, but this author knows them from Albert~[personal communication].


\begin{proposition}\label{prop-subword-closed-form}
Let $A$ be a finite alphabet.  Every subword-closed language $L\subseteq A^*$ can be expressed as a finite union of regular expressions of the form $w^{(1)}A_1^*\cdots w^{(k)}A_k^*w^{(k+1)}$ where each $w^{(i)}$ is a word and each $A_i$ is a subset of $A$.
\end{proposition}
\begin{proof}
First we observe that subword-closed languages are regular.  By Higman's Theorem, there is a finite set of words $B$ such that $w\in L$ if and only if $w\not\ge b$ for any $b\in B$ (this is the word analogue of what we call the basis in the permutation context).  For any $b=b_1\cdots b_k\in B$, the set of words containing $b$ can be expressed as a regular expression $A^*b_1\cdots A^*b_kA^*$, so its complement (the set of words not containing $b$) is also regular, from which it follows that the set of words not containing any $b\in B$ is regular.

Our proof can therefore use induction on the regular expression defining $L$.  The base cases where $L$ is empty or a single letter are both trivial.  Thus we need only consider where $L$ is a union, concatenation, or star of two (or one, in the star case) regular expressions.  The union and concatenation cases are clear, so suppose that $L=E^*$ for a regular expression $E$.  Since $L$ is closed under subwords, it follows that $L=A_E^*$ where $A_E\subseteq A$ is the set of letters occurring in $E$, proving the proposition.
\end{proof}

From this, we immediately have the following growth rate result.

\begin{proposition}\label{prop-subword-closed-growth}
Let $A$ be a finite alphabet.  Every subword-closed language $L\subseteq A^*$ has an integral growth rate.
\end{proposition}
\begin{proof}
As in Proposition~\ref{pwo-atomic-gr}, it follows that the (upper, lower, or proper) growth rate of a $L$ is the maximum (upper, lower, or proper) growth of one of the regular expressions guaranteed by Proposition~\ref{prop-subword-closed-form}; suppose that the growth rate of $L$ is equal to the growth rate of $w^{(1)}A_1^*\cdots w^{(k)}A_k^*w^{(k+1)}$.  Clearly the lower growth rate of $L$ is at least $\max |A_i|$, and an analogous computation to Proposition~\ref{gridded-gr} then shows that the upper growth rate of $L$ is at most $\max |A_i|$, completing the proof.
\end{proof}

Returning to monotone grid classes of vectors, we have, via Proposition~\ref{gridded-gr}, the following result.

\begin{proposition}\label{prop-gr-mono-vector}
If the permutation class $\C$ is contained in a monotone grid class of a vector, then $\gr(\C)$ exists and is integral.
\end{proposition}

Before concluding this section, we note that --- in sharp contrast to grid classes in general --- it is easy to compute bases of grid classes of vectors by iterating the following result.

\begin{proposition}[Atkinson~\cite{atkinson:restricted-perm:}]\label{prop-juxta-basis}
Let $\C$ and $\D$ be permutation classes.  The basis elements of the class $\Grid\left(\begin{array}{cc}\C&\D\end{array}\right)$ can all be expressed as concatenations $\rho\sigma\tau$ where either:
\begin{itemize}
\item[(a)] $|\sigma|=1$, $\rho \sigma$ has the same relative order as a basis element of $\C$, and $\sigma\tau$ has the same relative order as a basis element of $\D$, or
\item[(b)] $\sigma$ is empty, $\rho$ has the same relative order as a basis element of $\C$, and $\tau$ has the same relative order as a basis element of $\D$.
\end{itemize}
In particular, if two classes have finite bases then their juxtaposition also has a finite basis.
\end{proposition}

\subsection{Triple Alternations}\label{subsec-triple-alternations}

We consider first the linear triple alternations, and (without loss of generality) those which can be divided by vertical lines into three parts, left, middle, and right.  Consider such a permutation $\pi_1$, in which each of the parts have $3m^8$ entries.  By the Erd\H{o}s-Szekeres Theorem, the entries of the left part contain a monotone subsequence of length at least $m^4$.  Consider then the linear triple alternation $\pi_2\le\pi_1$ which contains each of these $m^4$ entries together with another $m^4$ entries from each of the other parts.  By applying the Erd\H{o}s-Szekeres Theorem again to the entries in the middle part, we find a monotone subsequence with $m^2$ entries.  Now consider the linear triple alternation $\pi_3\le\pi_2$ which contains these $m^2$ entries, together with $m^2$ entries from each of the other two parts.  By applying the Erd\H{o}s-Szekeres Theorem a third time, to the entries in the right part, find a linear triple alternation with $3m$ entries in which each of the parts is monotone, giving the following result.

\begin{proposition}\label{contain-mono-linear-triples}
If a permutation class contains arbitrarily long linear triple alternations, then it contains a $1\times 3$ or $3\times 1$ monotone grid class.
\end{proposition}

We are only interested in the lower growth rates of these classes.  Proposition~\ref{contain-mono-linear-triples} and our work in the previous subsection give the following.

\begin{proposition}\label{prop-linear-triple-alternations}
If the permutation class $\C$ contains arbitrarily long linear triple alternations then $\lgr(\C)\ge 3$.
\end{proposition}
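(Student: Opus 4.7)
The plan is to reduce the problem, via Proposition~\ref{contain-mono-linear-triples}, to a growth-rate calculation on a $1\times 3$ monotone grid class. If $\C$ contains arbitrarily long linear triple alternations, then Proposition~\ref{contain-mono-linear-triples} tells us that $\C$ contains, as a subclass, a symmetry of one of
$$
\Grid\left(\begin{footnotesize}\begin{array}{ccc}\Av(21)&\Av(21)&\Av(21)\end{array}\end{footnotesize}\right),\ \Grid\left(\begin{footnotesize}\begin{array}{ccc}\Av(21)&\Av(21)&\Av(12)\end{array}\end{footnotesize}\right),\ \Grid\left(\begin{footnotesize}\begin{array}{ccc}\Av(21)&\Av(12)&\Av(21)\end{array}\end{footnotesize}\right),\ \Grid\left(\begin{footnotesize}\begin{array}{ccc}\Av(21)&\Av(12)&\Av(12)\end{array}\end{footnotesize}\right),
$$
and the containment gives $\lgr(\C)\ge\lgr(\Grid(\M))$ for the relevant grid class $\Grid(\M)$. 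Thus it suffices to show that each of these four grid classes has lower growth rate at least $3$.

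Next I would count gridded permutations using the three-letter analogue of the encoding of Subsection~\ref{sec-alt-subclasses}. For any of the four grid matrices $\M$, associate to each $\M$-gridded permutation $\pi$ of length $n$ the word $w_\pi\in\{1,2,3\}^n$ whose $i$th letter records the column into which the $i$th entry of $\pi$ falls. The word $w_\pi$ determines the column divisions and the partition of the value set among the three columns; within each column the entries must appear increasingly or decreasingly (depending on whether the cell is $\Av(21)$ or $\Av(12)$), and in either case that internal order is forced. So in each of the four cases the map $\pi\mapsto w_\pi$ is a bijection from $\M$-gridded permutations of length $n$ onto $\{1,2,3\}^n$; equivalently, the number of $\M$-gridded permutations of length $n$ is $3^n$.

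By Proposition~\ref{gridded-gr}, the lower growth rate of $\Grid(\M)$ equals that of the sequence enumerating its $\M$-gridded permutations, which is $\lim_n\sqrt[n]{3^n}=3$. Combining this with the reduction in the first paragraph yields $\lgr(\C)\ge 3$, as claimed.

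There is no substantial obstacle; the only point that warrants care is verifying that the same encoding applies uniformly to all four grid classes (so that the quoted ``order-preserving bijection between the gridded permutations in them'' is manifest) and that $\M$-gridded permutations can indeed be chosen as the correct unit for the growth-rate count, which is exactly what Proposition~\ref{gridded-gr} is designed to allow.
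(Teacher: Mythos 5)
Your proof follows the paper's route exactly: use Proposition~\ref{contain-mono-linear-triples} to find a $1\times 3$ monotone grid class inside $\C$, enumerate its gridded permutations via a three-letter encoding to get $3^n$ of them, and finish with Proposition~\ref{gridded-gr}. The one spot worth tightening is the description of the encoding: for $\pi\mapsto w_\pi$ to be a bijection onto $\{1,2,3\}^n$ the word must be read \emph{by value} (bottom to top, with $w_\pi(j)$ recording the column occupied by the entry of value $j$), not by position --- reading by position yields only the weakly increasing words --- and this is the ``analogue of the encoding used for parallel alternations'' the paper has in mind.
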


Now consider the hook triple alternations.  The following proposition follows from the same multiple applications of Erd\H{o}s-Szekeres as Proposition~\ref{contain-mono-linear-triples}.

\begin{proposition}\label{contain-mono-hook-triples}
If a permutation class contains arbitrarily long hook triple alternations, then it contains a symmetry of one of the following classes:
$$
\begin{array}{lll}
\Grid\left(\begin{footnotesize}\begin{array}{cc}\Av(21)&\\\Av(21)&\Av(21)\end{array}\end{footnotesize}\right),
&
\Grid\left(\begin{footnotesize}\begin{array}{cc}\Av(21)&\\\Av(21)&\Av(12)\end{array}\end{footnotesize}\right),
&
\Grid\left(\begin{footnotesize}\begin{array}{cc}\Av(12)&\\\Av(21)&\Av(12)\end{array}\end{footnotesize}\right),
\\\\
\Grid\left(\begin{footnotesize}\begin{array}{cc}\Av(21)&\\\Av(12)&\Av(21)\end{array}\end{footnotesize}\right),
&
\Grid\left(\begin{footnotesize}\begin{array}{cc}\Av(21)&\\\Av(12)&\Av(12)\end{array}\end{footnotesize}\right),
\mbox{ or}
&
\Grid\left(\begin{footnotesize}\begin{array}{cc}\Av(12)&\\\Av(12)&\Av(12)\end{array}\end{footnotesize}\right).
\end{array}
$$
\end{proposition}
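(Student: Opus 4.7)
The plan is to run essentially the same argument that established Proposition~\ref{contain-mono-linear-triples}, substituting the L-shaped geometry of a hook triple alternation for the straight three-column geometry of a linear one. The separation conditions in the definition of a hook triple alternation force an L-shape: since no pair of points from $A$ is separated by a point of $C$ (and conversely), the parts $A$ and $C$ must have disjoint projections on both axes, while the two-sided separation property for $B$ places $B$ at the inner corner of the resulting L. Up to the eight symmetries of the permutation class I may therefore place $B$ in the lower-left cell of a $2\times 2$ scheme whose upper-right cell is empty, with $A$ to the right of $B$ and $C$ above $B$.

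Given a hook triple alternation in this canonical form whose three parts each contain $3n^{8}$ points (say), I would apply the Erd\H{o}s--Szekeres Theorem three times in succession, exactly as in the paragraph preceding Proposition~\ref{contain-mono-linear-triples}. First extract a monotone subsequence of length $n^{4}$ from the $A$-points; restrict to the sub-hook-triple-alternation on those $n^{4}$ points of $A$ together with $n^{4}$ arbitrary points of $B$ and $n^{4}$ arbitrary points of $C$. Then do the same to the $B$-points to obtain $n^{2}$ monotone ones, and finally to the $C$-points to obtain $n$ monotone ones. The resulting permutation lies in one of the $2^{3}=8$ grid classes obtained by labeling the three nonempty cells of the L-shape with either $\Av(12)$ or $\Av(21)$.

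To finish, I would argue that these eight classes reduce to six under the stabiliser of the L-shape inside the symmetry group of a permutation class. Among the eight class symmetries only the identity and the inverse (transpose) preserve the location of the empty cell; the inverse swaps the $A$- and $C$-cells while fixing $B$, and it preserves each of the monotone classes $\Av(12)$ and $\Av(21)$. Hence the induced action on sign-patterns swaps the $A$- and $C$-labels: the four patterns with $A$-label equal to $C$-label are fixed and the remaining four pair off, yielding $4+2=6$ orbits, which a direct check confirms are represented by the six matrices in the statement.

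The main obstacle is really the initial reduction: verifying carefully that the three separation conditions genuinely confine $A$, $B$, $C$ to the three cells of an L-shape after applying a suitable permutation-class symmetry, and that the designation of which part plays the role of the inner corner is uniquely forced by the two-sided separation hypothesis on $B$. Once this is established, the iterated Erd\H{o}s--Szekeres pumping is entirely routine and the symmetry bookkeeping is a finite check; no ideas beyond those used for Proposition~\ref{contain-mono-linear-triples} are required.
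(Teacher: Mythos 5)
Your proposal is correct and follows essentially the same route the paper takes: the paper dismisses this result with "follows from the same multiple applications of Erd\H{o}s--Szekeres as Proposition~\ref{contain-mono-linear-triples}," and your iterated Erd\H{o}s--Szekeres pumping on the three parts is exactly that argument, with the L-shape confinement of $A$, $B$, $C$ and the orbit count of $8$ labelings under the order-$2$ stabilizer of the L-shape (giving the six listed representatives) supplying details the paper leaves implicit.
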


Again it suffices to enumerate gridded permutations in these classes, and again, the answer is the same for all of the classes, so we study
$$
\Grid\left(\begin{footnotesize}\begin{array}{cc}&\Av(21)\\\Av(21)&\Av(21)\end{array}\end{footnotesize}\right),
$$
using a special case of an encoding from Vatter and Waton~\cite{vatter:on-points-drawn:}.  

We divide the entries of such a gridded hook triple alternation, say $\pi$, into those which are in the top (designated by $t$), left ($\ell$), and hook ($h$) parts, aiming to find a corresponding word over the alphabet $A=\{h,\ell,t\}$.  We first read the $h$ and $t$ entries from left-to-right, recording $h$s and $t$s, to form a word $w_\pi^{ht}$.  We then read the $h$ and $\ell$ entries from bottom-to-top, recording a word $w_\pi^{h\ell}$.  Because the hook entries are monotone increasing, they correspond to the same entries in each of these two words.  We use this to amalgamate the words, identifying them along their $h$ entries and placing $\ell$s before $t$s.  For example, if $w_\pi^{ht}=t^{i_0}ht^{i_1}\cdots ht^{i_{k}}$ and $w_\pi^{h\ell}=\ell^{j_0}h\ell^{j_1}\cdots h\ell^{j_{k}}$ (here the exponents are allowed to be $0$) then our amalgamated word would be $w_\pi=\ell^{j_0}t^{i_0}h\ell^{j_1}t^{i_1}\cdots h\ell^{i_{k}}t^{i_{k}}$.

This establishes a bijection between gridded permutations (which have the same growth rate as the grid class by Proposition~\ref{gridded-gr}) in this grid class and the language $A^*\setminus A^*t\ell A^*$, from which it follows that the generating function for such gridded permutations is $1/(1-3x+x^2)$.
We thus have the following proposition%
\footnote{By choosing a {\it canonical gridding\/} for each permutation in this grid class it is possible to give the exact enumeration (which was computed by Waton~\cite{waton:on-permutation-:} using another method).  We can do this by placing the line separating $\ell$s from $h$s as far to the right as possible, and then placing the line separating $t$s from $h$s as high as possible.  It follows that in any canonical gridding of a permutation, if there is an $h$, then the $h$s and $\ell$s must contain a descent, meaning that there must be some $h$ before some $\ell$ and therefore we forbid words of the form $\{\ell,t\}^*h\{h,t\}^*$.  Similarly, if there is a $t$, then the $t$s and $h$s must form a descent, so we forbid words of the form $\{h,\ell\}^*t\{l,t\}^*$.  Finally, words of the form $\ell^*tA^*$ correspond to the same permutations as words of the form $\ell^*A\ell$, so they are not canonical.  This shows that the permutations in this grid class are in bijection with the language
$$
A^*\setminus
\left(
A^*t\ell A^*
\cup
\{\ell,t\}^*h\{h,t\}^*
\cup
\{h,\ell\}^*t\{l,t\}^*
\cup
\ell^*tA^*
\right).
$$
This language (and thus the grid class of interest) has the generating function
$$
\frac{1-7x+19x^2-25x^3+17x^4-4x^5}{(1-x)^3(1-2x)(1-3x+x^2)}.
%
$$
Furthermore, it can be computed using Proposition~\ref{prop-juxta-basis} that the basis of this grid class is $\{321, 214365, 214635, 314265, 314625\}$.}.

\begin{proposition}\label{prop-hook-triple-alternations}
If the permutation class $\C$ contains arbitrarily long hook triple alternations then $\lgr(\C)\ge 1+\varphi\approx 2.61803$ (here $\varphi$ denotes the golden ratio).
\end{proposition}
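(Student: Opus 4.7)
The plan is to mimic the linear-triple-alternation argument of Proposition~\ref{prop-linear-triple-alternations}, combining the Erd\H{o}s--Szekeres reduction already furnished by Proposition~\ref{contain-mono-hook-triples} with a bijective encoding of the relevant gridded permutations by a regular language, and then extracting the growth rate via Pringsheim's Theorem.

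First I would invoke Proposition~\ref{contain-mono-hook-triples} to pass to a subclass of $\C$ that is a symmetry of one of the six listed $L$-shaped monotone grid classes. Up to the usual eight permutation-class symmetries, all six classes have order-preserving bijections between their sets of gridded permutations, so it suffices to work with the single class $\Grid(\M)$ where $\M$ is the $L$-shaped matrix with $\Av(21)$ in each of its three nonempty cells. Proposition~\ref{gridded-gr} then reduces the task to showing that the sequence counting $\M$-gridded permutations has lower growth rate at least $1+\varphi$.

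Next I would formalize the amalgamation encoding sketched in the excerpt. Given an $\M$-gridded permutation $\pi$, label each entry by $h$, $r$, or $t$ according to whether it lies in the hook, right, or top cell. Reading the $h$ and $t$ entries from left to right produces a word $w_\pi^{ht}\in\{h,t\}^*$, and reading the $h$ and $r$ entries from bottom to top produces $w_\pi^{hr}\in\{h,r\}^*$. Since the hook entries are monotone increasing, they appear in the same relative order in both words, so the two words amalgamate along their $h$s into a single word $w_\pi\in\{h,r,t\}^*$ of length $|\pi|$. The central combinatorial claim is that $\pi\mapsto w_\pi$ is a bijection onto $L=\{h,r,t\}^*\setminus\{h,r,t\}^*rt\{h,r,t\}^*$, i.e.\ the words avoiding $rt$ as a factor. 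The forbidden factor arises because an $r$ followed immediately (with no hook entry between) by a $t$ would force a $t$-entry lying horizontally to the right of an $r$-entry, which is incompatible with the cell positions; conversely, any word in $L$ reconstructs a unique monotone filling of the three cells.

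Once the bijection is established, a standard last-letter decomposition gives the recurrence $a_n=3a_{n-1}-a_{n-2}$ for $|L\cap\{h,r,t\}^n|$, so the generating function of $L$ has denominator $1-3x+x^2$, whose smallest positive root has reciprocal $(3+\sqrt{5})/2=1+\varphi$. Combining Pringsheim's Theorem with Proposition~\ref{gridded-gr} then yields $\lgr(\C)\ge 1+\varphi$. The main obstacle is the verification of the bijection, in particular that the factor $rt$ (and no longer one) is the precise obstruction to realizability; the remaining generating-function computation is routine.
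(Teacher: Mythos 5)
Your proposal follows the paper's proof essentially verbatim: reduce via Proposition~\ref{contain-mono-hook-triples} to a single $L$-shaped monotone grid class, encode the gridded permutations by amalgamating the two readings along the hook entries, recognize the image as the words over $\{h,r,t\}$ avoiding the factor $rt$, and read off the growth rate $(3+\sqrt{5})/2=1+\varphi$ from the denominator $1-3x+x^2$. One small caveat on the step you flag as ``the main obstacle'': the image avoids $rt$ not because an $rt$ factor corresponds to a geometrically impossible arrangement of $r$- and $t$-entries (the $t$-entries are read positionally in column~1 and the $r$-entries by value in row~1, so they are never compared directly), but simply because the amalgamation convention places the $t$s before the $r$s in each gap between consecutive $h$s; once this is noted, the bijection onto $L$ and the resulting generating function are immediate, as in the paper.
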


There is a pleasing generalization of Proposition~\ref{prop-hook-triple-alternations}.  We say that a {\it monotone staircase grid class\/} is a grid class of the form
$$
\makeatletter
\def\Ddots{\mathinner{\mkern1mu\raise\p@
\vbox{\kern7\p@\hbox{.}}\mkern2mu
\raise4\p@\hbox{.}\mkern2mu\raise7\p@\hbox{.}\mkern1mu}}
\makeatother
\Grid\left(\begin{footnotesize}
\begin{array}{ccccccc}
&&&&&\Av(21)&\Av(21)\\
&&&&\Av(21)&\Av(21)\\
&&&\Ddots&\Ddots\\
&&\Av(21)&\Av(21)\\
&\Av(21)&\Av(21)\\
\Av(21)&\Av(21)
\end{array}
\end{footnotesize}\right).
$$
Consider a monotone staircase grid class with $k$ nonempty cells.  By Proposition~\ref{gridded-gr}, it suffices to count gridded permutations, and it follows quickly from this that the growth rate of this monotone staircase grid class is given by maximizing
$$
\lim_{n\rightarrow\infty}\sqrt[n]{{a_1n+a_2n\choose a_1n}{a_2n+a_3n\choose a_2n}{a_3n+a_4n\choose a_3n}\cdots{a_{k-1}n+a_k\choose a_{k-1}}}
$$
for $a_1+\cdots+a_k=1$.  (Essentially, this expression counts the number of permutations of length $n$ with $\lfloor a_in\rfloor$ entries in cell $i$, where the cells are labeled from one corner to the other.)  By applying Stirling's formula, taking the $n$th root, and using Lagrange multipliers, this expression is maximized when
$$
\frac{a_1+a_2}{a_1}
=
\frac{(a_1+a_2)(a_2+a_3)}{a_2^2}
=
\frac{(a_2+a_3)(a_3+a_4)}{a_3^2}
=
\cdots
=
\frac{a_{k-1}+a_k}{a_k}.
$$
Setting $r_i=a_{i+1}/a_i$ and $r_1=r$, we find that
$$
r_i=r-1-\frac{1}{r_{i-2}},
$$
for $i\ge 3$, leading to the following result.

\begin{theorem}[Albert and Vatter, unpublished]
The growth rate of a monotone staircase grid class with $k$ cells is
$1+r$ where $r$ is the largest positive root of
$$
x-\cfrac{1}{(x-1)-\cfrac{1}{(x-1)-\cfrac{1}{-\cdots-\cfrac{1}{(x-1)}}}}
$$
if $k$ is even and
$$
x-\cfrac{1}{(x-1)-\cfrac{1}{(x-1)-\cfrac{1}{-\cdots-\cfrac{1}{(x-1)-\cfrac{1}{x}}}}}
$$
if $k$ is odd, where $(x-1)$ occurs $\lfloor k/2\rfloor$ times.
\end{theorem}

\subsection{$(3,1)$-Alternations}\label{subsec-up-down-interleaving-growth}

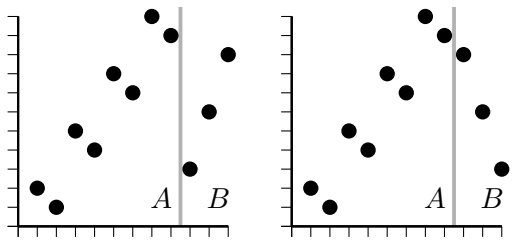
\begin{figure}
\begin{center}
\begin{tabular}{ccc}
\psset{xunit=0.01in, yunit=0.01in}
\psset{linewidth=1\psxunit}
\begin{pspicture}(0,0)(110,110)
\psline[linecolor=darkgray,linestyle=solid,linewidth=0.02in](85,0)(85,115)
\psaxes[dy=10,Dy=1,dx=10,Dx=1,tickstyle=bottom,showorigin=false,labels=none](0,0)(110,110)
\pscircle*(10,20){0.04in}
\pscircle*(20,10){0.04in}
\pscircle*(30,50){0.04in}
\pscircle*(40,40){0.04in}
\pscircle*(50,80){0.04in}
\pscircle*(60,70){0.04in}
\pscircle*(70,110){0.04in}
\pscircle*(80,100){0.04in}
\pscircle*(90,30){0.04in}
\pscircle*(100,60){0.04in}
\pscircle*(110,90){0.04in}
\rput(75,15){$A$}
\rput(105,15){$B$}
\end{pspicture}
&&
\psset{xunit=0.01in, yunit=0.01in}
\psset{linewidth=1\psxunit}
\begin{pspicture}(0,0)(110,110)
\psline[linecolor=darkgray,linestyle=solid,linewidth=0.02in](85,0)(85,115)
\psaxes[dy=10,Dy=1,dx=10,Dx=1,tickstyle=bottom,showorigin=false,labels=none](0,0)(110,110)
\pscircle*(10,20){0.04in}
\pscircle*(20,10){0.04in}
\pscircle*(30,50){0.04in}
\pscircle*(40,40){0.04in}
\pscircle*(50,80){0.04in}
\pscircle*(60,70){0.04in}
\pscircle*(70,110){0.04in}
\pscircle*(80,100){0.04in}
\pscircle*(90,90){0.04in}
\pscircle*(100,60){0.04in}
\pscircle*(110,30){0.04in}
\rput(75,15){$A$}
\rput(105,15){$B$}
\end{pspicture}
\end{tabular}
\end{center}
\caption[]{Two gridded permutations resulting from arbitrarily long $(3,1)$-alternations.}\label{fig-li-ld}
\end{figure}

In this subsection our aim is to give a lower bound on the growth rate of a class which contains arbitrarily long $(3,1)$-alternations.  Recall that such an alternation may be divided by a single horizontal or vertical line into two parts, $A$ and $B$, so that $A$ consists of nonmonotone intervals of length three, each separated from every other by at least one point in part $B$, and every pair of points in part $B$ is separated by at least one of the intervals of part $A$.  Consider a $(3,1)$-alternation, $\pi$, of length at least $4m^4$.  By symmetry we may suppose that it can be separated by a vertical line into two such parts $A$ and $B$.  By the Erd\H os-Szekeres Theorem, there is a monotone subsequence of at least $m^2$ intervals in part $A$; let us suppose by symmetry that this subsequence is increasing.  These nonmonotone intervals are then separated from each other by at least one point in part $B$.  Applying the Erd\H os-Szekeres Theorem to those points we can find a monotone subsequence of length at least $m$.  As each nonmonotone interval in part $A$ contains $21$, we have therefore concluded that every $(3,1)$-alternation of length at least $4m^4$ contains a (symmetry of a) subpermutation of length at least $3m$ which can be divided by a single vertical line into parts $A$ and $B$ in which:
\begin{itemize}
\item part $A$ consists of an increasing set of intervals order isomorphic to $21$, each separated from every other by at least one point in part $B$ and
\item part $B$ consists of a monotone set of points, each separated from every other by at least one interval in $A$,
\end{itemize}
as shown in Figure~\ref{fig-li-ld}.

We therefore need to count permutations contained in the closures of such structures.  Note that these classes are \emph{not} grid classes, because the copies of $21$ on the left are not allowed to interleave with the entries on the right.  Still, it makes sense to refer to the line dividing the $\oplus 21$ side of such permutations from the other side as a ``gridding'', and Proposition~\ref{gridded-gr} easily extends to this context, so we need only count the gridded permutations.  As there are the same number of gridded permutations no matter whether the entries in part $B$ are increasing or decreasing, we assume that they are increasing.

We encode these gridded permutations reading bottom to top, using the alphabet $\{\ell_1, \ell_{21}, r\}$, where $\ell_1$ denotes a single entry on the left, $\ell_{21}$ denotes an interval order isomorphic to $21$ on the left, and $r$ denotes a single entry on the right.  In this manner, the gridded permutations in this class correspond precisely to the language $\{\ell_1,\ell_{21},r\}^*$.  To enumerate these gridded permutations we assign a weight of $x$ to the letters $\ell_1$ and $r$ and a weight of $x^2$ to the letter $\ell_{21}$.  The weight-generating function for this language is then $1/(1-2x-x^2)$, which gives the following proposition%
\footnote{With a bit more work, it is possible to show that this class of permutations is
$$
\Grid\left(\begin{array}{cc}\bigoplus 21&\Av(21)\end{array}\right)\cap\Av(3142, 4213).
$$
Proposition~\ref{prop-juxta-basis} can be used to show that the basis of $\Grid\left(\begin{array}{cc}\bigoplus 21&\Av(21)\end{array}\right)$ is
$$
\{3421, 4132, 4231, 4321, 23154, 24153, 31254, 32154, 34152, 41253, 42153, 43152\}.
$$
The Maple package {\sc InsEnc} described in Vatter~\cite{vatter:finding-regular:} then readily computes that the generating functions for the full grid class and the subclass we are interested in are, respectively,
$$
\frac{1-5x+8x^2-2x^3-2x^4+x^5}{(1-x)^2(1-x-x^2)(1-3x+x^2)}
\quad
\mbox{and}
\quad
\frac{1-3x+x^2+4x^3+x^4}{(1-x)(1-x-x^2)(1-2x-x^2)}.
$$
}.

\begin{proposition}\label{prop-ld-growth}
If the permutation class $\C$ contains arbitrarily long $(3,1)$-alternations then $\lgr(\C)\ge 1+\sqrt{2}$.
\end{proposition}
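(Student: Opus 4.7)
The plan is to invoke Proposition~\ref{31-classes} to reduce the problem to bounding the lower growth rate of two specific grid classes, and then to enumerate these via the encoding sketched in the surrounding text.

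First I would apply Proposition~\ref{31-classes} to pass from the assumption that $\C$ contains arbitrarily long $(3,1)$-alternations to the conclusion that $\C$ contains a symmetry of either
$$
\Grid\left(\begin{array}{cc}\bigoplus 21&\Av(21)\end{array}\right)
\ \mbox{or}\
\Grid\left(\begin{array}{cc}\bigoplus 21&\Av(12)\end{array}\right).
$$
Symmetries preserve growth rates, and the two grid classes have an order-preserving bijection between their gridded permutations (swap the vertical orientation of the right cell), so it suffices to bound the lower growth rate of the gridded permutations in $\Grid\left(\begin{array}{cc}\bigoplus 21&\Av(21)\end{array}\right)$, by virtue of Proposition~\ref{gridded-gr}.

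Next I would encode these gridded permutations as words over the alphabet $\{\ell_1,\ell_{21},r\}$ by scanning from bottom to top: each entry of the right cell contributes an $r$; each maximal indecomposable block in the left cell contributes either $\ell_1$ (a singleton) or $\ell_{21}$ (a $21$-interval). Because the left cell is a sum of $21$'s and thus splits canonically into singletons and $21$-intervals, and because the right cell consists of distinct entries in monotone position, this is a bijection onto $\{\ell_1,\ell_{21},r\}^*$. Weighting $\ell_1$ and $r$ by $x$ and $\ell_{21}$ by $x^2$ (to track the number of points of the underlying permutation) yields the weight-generating function
$$
\frac{1}{1-(x+x^2+x)}=\frac{1}{1-2x-x^2}.
$$

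Finally I would apply Pringsheim's Theorem: the smallest positive pole of this series is the smaller root of $1-2x-x^2$, namely $\sqrt{2}-1$, so the sequence of gridded permutations has growth rate $(\sqrt{2}-1)^{-1}=1+\sqrt{2}$. Combining with Proposition~\ref{gridded-gr} gives $\lgr\bigl(\Grid(\cdot)\bigr)=1+\sqrt 2$, and since (up to symmetry) this class is contained in $\C$, we conclude $\lgr(\C)\ge 1+\sqrt 2$. There is no real obstacle here; the only delicate step is checking that the encoding is indeed a bijection onto $\{\ell_1,\ell_{21},r\}^*$ — i.e., that every such word arises from a genuine gridded permutation and that the decomposition of the left cell into $\ell_1$'s and $\ell_{21}$'s is canonical — but this follows immediately from the definitions of $\bigoplus 21$ and $\Av(21)$.
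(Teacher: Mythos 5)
Your proposal is correct and follows essentially the same route as the paper: reduce via Proposition~\ref{31-classes} and Proposition~\ref{gridded-gr} to counting gridded permutations in $\Grid\left(\begin{array}{cc}\bigoplus 21&\Av(21)\end{array}\right)$, encode them bottom-to-top over $\{\ell_1,\ell_{21},r\}$, weight to get $1/(1-2x-x^2)$, and read off the growth rate $1+\sqrt{2}$ from the smallest positive pole. The only step the paper states without justification but you spell out is the order-preserving correspondence between the gridded permutations of the two grid classes; your observation that the word encoding is the same for both (equivalently, complement the right-cell pattern) supplies that detail correctly.
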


\subsection{Sum Indecomposable Permutations}\label{subsec-sum-indecomps}

In this final subsection we present lower bounds on the ``diversity'' of sum indecomposable permutations.  These are used in the characterization of sub-$\kappa$ growth rates to verify that certain sequences of sum indecomposable permutations are unachievable.

\begin{proposition}\label{prop-2-sum-indecomps}
For a sum indecomposable permutation $\pi$ of length $n$ either
\begin{enumerate}
\item[(1)] $\pi=n\cdots 21,12\cdots (n-1)\ominus 1,\mbox{ or }1\ominus 12\cdots (n-1)$, or
\item[(2)] $\pi$ contains at least $2$ distinct sum indecomposable permutations of length $n-1$.
\end{enumerate}
In particular, if the permutation class $\C$ contains $2$ sum indecomposable permutations of length $n\ge 4$ then it also contains $2$ sum indecomposable permutations of length $n-1$.
\end{proposition}
\begin{proof}
Take $\pi$ to be a sum indecomposable permutation of length $n$ and denote the indices of the lexicographically minimal path connecting $1$ to $n$ in $G_\pi$ by $1=i_1<i_2<\cdots<i_m=n$.  If this path contains only the vertices $1$ and $n$ then $\pi(1)>\pi(n)$ and it is easy to see that (2) is satisfied if any of the following hold:
\begin{itemize}
\item $\pi$ contains entries both above $\pi(1)$ and below $\pi(n)$,
\item the entries above $\pi(1)$ are nonmonotone,
\item the entries below $\pi(n)$ are nonmonotone, or
\item the entries lying vertically between $\pi(1)$ and $\pi(n)$ are nonmonotone.
\end{itemize}
If none of these conditions hold, it follows either that $\pi$ satisfies (1) or that $\pi=1\ominus 12\cdots(n-2)\ominus 1$, which satisfies (2).

Thus we may suppose that $m\ge 3$.  Now we divide $\pi$ into the sections $\pi((i_k,i_{k+1})\times[n])$.  If all of these sections are empty then $G_\pi$ is a path, so $\pi$ is an increasing oscillation by Proposition~\ref{inc-osc-path}, and it can be checked that (2) holds.  Similarly, (2) is clearly satisfied if two of these sections are nonempty.  Thus we may assume that precisely one of these sections, say $\pi((i_j,i_{j+1})\times[n])$, is nonempty.  In this (final) case, (2) can be seen to hold by the following argument: removing an entry from $\pi((i_j,i_{j+1})\times[n])$ gives one sum indecomposable permutation, and removing either the leftmost, rightmost, top, or bottom entry gives the other.
\end{proof}

Extending the argument of Proposition~\ref{prop-2-sum-indecomps} to handle classes with $3$ sum indecomposable permutations is quite technical, so we take a more computational tack in the proof of the next result.

\begin{proposition}\label{prop-3-sum-indecomps}
If the permutation class $\C$ contains $3$ sum indecomposable permutations of length $n\ge 5$ then it also contains $3$ sum indecomposable permutations of length $n-1$.
\end{proposition}
\begin{proof}
Let $\D$ denote the class of all permutations which each contain at most two sum indecomposable subpermutations of each length.  It is straight-forward to verify that for $n\ge 6$, $\D$ contains at least the following $23$ types of sum indecomposable permutations, which can be partitioned into $6$ families:
\begin{enumerate}
\item[(S1)] $n\cdots 21$, $12\cdots(n-1)\ominus 1$, and $1\ominus 12\cdots(n-1)$, all of which contain a single sum indecomposable subpermutation of each length,
\item[(S2)] all six inflations of $21$ by monotone permutations of lengths $2$ and $n-2$ except $n\cdots 21$,
\item[(S3)] the four inflations $231[1,\alpha,1]$, $231[\alpha,1,1]$, $312[1,\alpha,1]$, and $312[1,1,\alpha]$, where $\alpha=(n-2)\cdots 21$,
\item[(S4)] the four inflations $231[21,\alpha,1]$, $231[\alpha,21,1]$, $312[1,\alpha,21]$, and $312[1,21,\alpha]$, where $\alpha=(n-3)\cdots 21$,
\item[(S5)] the four inflations $2413[\alpha,1,1,1]$, $2413[1,1,1,\alpha]$, $3142[1,\alpha,1,1]$, and $3142[1,1,\alpha,1]$, where $\alpha=12\cdots(n-3)$, and
\item[(S6)] the two increasing oscillations of length $n$.
\end{enumerate}
We claim that all sum indecomposable permutations of length $n\ge 6$ in $\D$ are of one of these types.

As inverse and reverse-complement (the composition of reverse and complement, which commute with each other) preserve sum indecomposability, $\D$ is invariant under these symmetries.  It is readily computed that $\D$ does not contain any of the $24$ permutations $4231$, $24351$, $24513$, $24531$, $25143$, $25314$, $25413$, $35142$, $35412$, $35421$, $45312$, $235164$, $236145$, $236541$, $251364$, $324651$, $456123$, $456321$, $621543$, $2317465$, $2317546$, $3142765$, $3412765$, $3421756$, and therefore $\D\subseteq\Av(B)$ where $B$ denotes the $71$ permutations that can be generated from the aforementioned list of permutations by inverse and reverse-complement.  The Maple package {\sc InsEnc} described in Vatter~\cite{vatter:finding-regular:} can compute that the generating function for the sum indecomposable elements of $\Av(B)$ is
$$
1+x+x^2+3x^3+12x^4+25x^5+23\frac{x^6}{1-x},
$$
thereby verifying that the sum indecomposable elements of $\D$ are of the form claimed.

Now suppose that the permutation class $\C$ contains $3$ sum indecomposable permutations of length $n\ge 6$ (the $n=5$ case can be checked by hand or computer).  If $\C$ contains a sum indecomposable permutation which does not lie in $\D$ then this sum indecomposable permutation (and thus $\C$) contains at least $3$ sum indecomposable permutations of length $n-1$, so we may assume that the sum indecomposable permutations of $\C$ are contained in $\D$.  Now note that each sum indecomposable permutation in $\D$ contains a sum indecomposable permutation of length $n-1$ of its same type.  Therefore, no matter which $3$ sum indecomposable permutations of $\D$ are contained in $\C$, $\C$ must also contain at least $3$ sum indecomposable permutations of length $n-1$.
\end{proof}

\bigskip
\noindent{\bf Acknowledgments:} The author owes a large debt to Michael Albert for numerous insightful discussions related to this work.  Thanks are also owed to Robert Brignall, Nik Ru\v{s}kuc, and the referee for their helpful comments and suggestions, and Steven Finch for spotting an error in an earlier version of the paper.  Finally, the author owes his knowledge of the work of Gy{\'a}rf{\'a}s and Lehel~\cite{gyarfas:a-helly-type-pr:} and K{\'a}rolyi and Tardos~\cite{karolyi:on-point-covers:} to an unpublished manuscript of Daniel Werner and Matthias Lenz.

\bibliographystyle{acm}
\bibliography{../../refs}

\end{document}